\numberwithin{equation}{section}
\newtheorem{theorem}[equation]{Theorem}
\newtheorem*{thm}{Theorem}
\newtheorem{proposition}[equation]{Proposition}
\newtheorem{lemma}[equation]{Lemma}
\newtheorem{corollary}[equation]{Corollary}
\theoremstyle{definition}
\newtheorem{rmk}[equation]{Remark}
\newenvironment{remark}[1][]{\begin{rmk}[#1] \pushQED{\qed}}{\popQED \end{rmk}}
\newtheorem{eg}[equation]{Example}
\newenvironment{example}[1][]{\begin{eg}[#1] \pushQED{\qed}}{\popQED \end{eg}}
\newtheorem{defn}[equation]{Definition}
\newcommand{\cA}{\mathcal{A}}
\newcommand{\cB}{\mathcal{B}}
\newcommand{\bC}{\mathbf{C}}
\newcommand{\cC}{\mathcal{C}}
\newcommand{\cE}{\mathcal{E}}
\newcommand{\rE}{\mathrm{E}}
\newcommand{\bF}{\mathbf{F}}
\newcommand{\cF}{\mathcal{F}}
\newcommand{\rH}{\mathrm{H}}
\newcommand{\cJ}{\mathcal{J}}
\newcommand{\rL}{\mathrm{L}}
\newcommand{\cM}{\mathcal{M}}
\newcommand{\cO}{\mathcal{O}}
\newcommand{\bP}{\mathbf{P}}
\newcommand{\cQ}{\mathcal{Q}}
\newcommand{\cR}{\mathcal{R}}
\newcommand{\rR}{\mathrm{R}}
\newcommand{\bS}{\mathbf{S}}
\newcommand{\cS}{\mathcal{S}}
\newcommand{\fS}{\mathfrak{S}}
\newcommand{\cV}{\mathcal{V}}
\newcommand{\bX}{\mathbf{X}}
\newcommand{\cY}{\mathcal{Y}}
\newcommand{\bZ}{\mathbf{Z}}
\newcommand{\cZ}{\mathcal{Z}}
\newcommand{\bb}{\mathbf{b}}
\newcommand{\fb}{\mathfrak{b}}
\newcommand{\bd}{\mathbf{d}}
\newcommand{\re}{\mathrm{e}}
\newcommand{\fg}{\mathfrak{g}}
\newcommand{\fh}{\mathfrak{h}}
\newcommand{\bk}{\mathbf{k}}
\newcommand{\bp}{\mathbf{p}}
\newcommand{\fp}{\mathfrak{p}}
\newcommand{\bq}{\mathbf{q}}
\newcommand{\bu}{\mathbf{u}}
\newcommand{\ru}{\mathrm{u}}
\newcommand{\bw}{\mathbf{w}}
\renewcommand{\phi}{\varphi}
\renewcommand{\emptyset}{\varnothing}
\renewcommand{\tilde}[1]{\widetilde{#1}}
\newcommand{\ol}[1]{\overline{#1}}
\newcommand{\ul}[1]{\underline{#1}}
\newcommand{\DS}{\displaystyle}
\newcommand{\arxiv}[1]{\href{http://arxiv.org/abs/#1}{{\tt arXiv:#1}}}
\def\Ddots{\mathinner{\mkern1mu\raise\p@
\vbox{\kern7\p@\hbox{.}}\mkern2mu
\raise4\p@\hbox{.}\mkern2mu\raise7\p@\hbox{.}\mkern1mu}}
\DeclareMathOperator{\im}{image} 
\DeclareMathOperator{\codim}{codim}
\renewcommand{\hom}{\operatorname{Hom}}
\DeclareMathOperator{\rank}{rank}
\DeclareMathOperator{\Sym}{Sym}
\DeclareMathOperator{\Tor}{Tor}
\DeclareMathOperator{\Spec}{Spec}
\DeclareMathOperator{\gr}{gr}
\newcommand{\GL}{\mathbf{GL}}
\newcommand{\Gr}{\mathbf{Gr}}
\newcommand{\fgl}{\mathfrak{gl}}
\newtheorem*{theorem*}{Theorem}
 \DeclareMathOperator{\Fl}{{\bf Fl}}
 \newcommand{\gl}{\mathfrak{gl}}
\newcommand{\ds}{\mathrm{DS}}
\newcommand{\bos}{\mathrm{bos}}
\newcommand{\Split}{\mathrm{Split}}
\newcommand{\Fact}{\mathrm{Fact}}
\newcommand{\sdet}{\mathrm{sdet}}
\newcommand{\wt}{\widetilde}
\newcommand{\defi}[1]{{\bf \textsf{#1}}}
\newcommand{\stacks}[1]{\cite[Tag \href{https://stacks.math.columbia.edu/tag/#1}{#1}]{stacks-project}}
\title{Borel--Weil factorization for super Grassmannians}
\author{Steven V Sam}
\address{University of California, San Diego, La Jolla, CA, USA}
\email{ssam@ucsd.edu}
\urladdr{\url{https://mathweb.ucsd.edu/~ssam/}}
\date{November 7, 2025}
\begin{document}

\maketitle

\begin{abstract}
  This article deals with computing the cohomology of Schur functors applied to tautological bundles on super Grassmannians. We show that in a range of cases, the cohomology is a free module over the cohomology of the structure sheaf and that the space of generators is an irreducible representation of the general linear supergroup that can be constructed via explicit multilinear operations. Our techniques come from commutative algebra: we relate this cohomology calculation to Tor groups of certain algebraic varieties.
\end{abstract}

\tableofcontents

\section{Introduction}

\subsection{Overview}

The problem of computing sheaf cohomology of homogeneous bundles over super Grassmannians or more general homogeneous superspaces, i.e., finding a super analogue of the Borel--Weil--Bott theorem, was first posed by Manin in 1981, and only special cases are completely understood (e.g., the case of typical weights). In \cite{superres}, we added to this list of understood cases by computing the cohomology of the structure sheaf of super Grassmannians. The perspective in \cite{superres}, and the one taken in this paper, is that this problem can be approached using tools from invariant theory and commutative algebra.

The motivation behind this article is to extend the computation in \cite{superres} to more general homogeneous bundles. The super Grassmannian $X =\Gr(p|q,\bC^{m|n})$ has a tautological exact sequence of vector bundles
\[
  0 \to \cR \to \bC^{m|n} \times X \to \cQ \to 0.
\]
In the classical (non-super) setting (i.e., when $n=0$), every irreducible homogeneous bundle (up to a twist by a line bundle) can be constructed using Schur functors and is of the form $\bS_\alpha \cQ \otimes \bS_\beta(\cR^*)$. The Borel--Weil theorem implies that this bundle has no higher cohomology and that the space of sections is an irreducible representation of $\GL(m)$ that can be explicitly determined. For our purposes, this representation can be constructed as a rational Schur functor. (The full Borel--Weil--Bott theorem allows us to incorporate arbitrary twists, but this article will not deal with this generalization.) The main theorem of this paper generalizes this calculation (we give here a preliminary version, see Theorem~\ref{thm:main-Q} for the full statement, see Theorem~\ref{thm:relative} for a relative version, and see Theorem~\ref{thm:partialflag} for a generalization to super partial flag varieties):

\begin{thm}[preliminary version]
  Assume that $m-n -\ell(\alpha) \ge p-q \ge \ell(\beta)$ and set $\cE = \bS_\alpha(\cQ) \otimes \bS_\beta(\cR^*)$. Then $\rH^\bullet(X; \cE)$ is a free $\rH^\bullet(X; \cO_X)$-module and we have a graded $\GL(m|n)$-equivariant isomorphism
  \[
    \rH^\bullet(X; \cE) \cong \rH^\bullet(X; \cO_X)\otimes \rH^0(X; \cE).
  \]
  Furthermore, $\rH^0(X; \cE)$ is an irreducible $\GL(m|n)$-representation and can be constructed as a rational Schur functor.
\end{thm}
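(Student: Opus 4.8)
\emph{Set-up and reduction.} The plan is to turn the computation of $\rH^\bullet(X;\cE)$ into the study of a module over a polynomial superalgebra via the geometric method for computing syzygies (Kempf collapsing), and then to evaluate everything by the classical Borel--Weil--Bott theorem on the bosonic reduction $X_{\bos}=\Gr(p,\bC^m)\times\Gr(q,\bC^n)$. I would first dispose of the factor $\bS_\beta(\cR^*)$: on a super partial flag variety lying over $X$ that also remembers a flag inside $\cR$, this bundle is assembled from quotient-type tautological bundles, so the relative and partial-flag versions of the $\cQ$-only statement (Theorems~\ref{thm:relative} and~\ref{thm:partialflag}) reduce the claim to $\cE=\bS_\alpha(\cQ)$, the inequality $p-q\ge\ell(\beta)$ being exactly what is preserved. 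So assume $\cE=\bS_\alpha(\cQ)$ with $\ell(\alpha)\le(m-p)-(n-q)$. Put $V=\bC^{m|n}$, $F=\bC^{m-p|n-q}$, let $\mathbb{A}=\hom(V,F)$ be the affine superspace of linear maps (so $\bC[\mathbb{A}]=\Sym(V\otimes F^*)$), and set $Z=\{(\psi,R)\in\mathbb{A}\times X:\psi|_R=0\}$, with projections $\pi\colon Z\to X$ and $q\colon Z\to\mathbb{A}$.

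\emph{The determinantal model.} Here $Z$ is the zero locus of the tautological section of $\cR^*\otimes F$ on $\mathbb{A}\times X$; a count shows $\rank(\cR^*\otimes F)=\dim X$, so the section is regular and $\cO_Z$ is resolved on $\mathbb{A}\times X$ by the Koszul complex $\bigwedge^\bullet(\cR\otimes F^*)$. On the other hand $Z$ is the total space over $X$ of $\cQ^*\otimes F$, so $\pi_*\cO_Z=\bigoplus_\lambda\bS_\lambda(\cQ)\otimes\bS_\lambda(F^*)$ by the super Cauchy formula; hence $\rH^\bullet(Z;\pi^*\cE)=\bigoplus_\lambda\bS_\lambda(F^*)\otimes\rH^\bullet(X;\bS_\alpha\cQ\otimes\bS_\lambda\cQ)$, whose $\GL(F)$-invariant part is $\rH^\bullet(X;\cE)$, and the invariant subring $\rH^\bullet(Z;\cO_Z)^{\GL(F)}=\rH^\bullet(X;\cO_X)$ acts on it by cup product. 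Since $\dim Z=\dim\mathbb{A}$, the map $q$ is proper and birational onto a super determinantal variety $D\subseteq\mathbb{A}$; pushing the Koszul complex, twisted by $\pi^*\bS_\alpha\cQ$, forward along $q$ presents $Rq_*(\pi^*\cE)$ as a complex of free $\bC[\mathbb{A}]$-modules whose terms are built from the groups $\rH^j(X;\bS_\alpha\cQ\otimes\bS_\mu\cR)$ and $\bS_{\mu'}(F^*)$. Everything has thus been reduced to a finite list of such cohomology groups and their comparison with the case $\alpha=\varnothing$.

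\emph{Evaluation, freeness, generators.} To compute $\rH^j(X;\bS_\alpha\cQ\otimes\bS_\mu\cR)$ I would push down the affine morphism $X\to X_{\bos}$: $\cO_X$ becomes the exterior algebra on the odd conormal bundle, $\bS_\alpha\cQ$ and $\bS_\mu\cR$ decompose by the Berele--Regev rule into external products of ordinary Schur bundles on the two factors, and each resulting group is given by classical Borel--Weil--Bott on $\Gr(p,m)\times\Gr(q,n)$. The hypothesis $m-n-\ell(\alpha)\ge p-q$ should force the Bott algorithm to behave identically for $\bS_\alpha\cQ$ and for $\cO_X$: every chamber contributing to the $\bS_\alpha\cQ$-side arises from one on the $\cO_X$-side by a single dominant translation, in the same cohomological degree and with no extra cancellation. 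Consequently the twisted complex is, after this identification, the tensor product of the untwisted one with a single $\GL(m|n)$-representation $U$ (trivial over $\GL(F)$) concentrated in homological and internal degree $0$; taking $\GL(F)$-invariants yields the graded $\GL(m|n)$-equivariant isomorphism $\rH^\bullet(X;\cE)\cong\rH^\bullet(X;\cO_X)\otimes U$, so $U=\rH^0(X;\cE)$ and $\rH^\bullet(X;\cE)$ is free over $\rH^\bullet(X;\cO_X)$ with generators in degree $0$. (Alternatively one proves surjectivity of the multiplication map by Nakayama over $\bC[\mathbb{A}]$ and then equality by a Hilbert-series count supplied by the two complexes.) Finally, the bottom of the resolution exhibits $\rH^0(X;\cE)$ as a distinguished $\GL(m|n)$-subquotient of a Schur functor applied to $V$ and $V^*$, built by explicit contraction maps; one checks it is nonzero of the predicted highest weight and identifies it with the rational Schur functor of $\bC^{m|n}$ attached to $(\alpha;\beta)$, irreducible in this range.

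\emph{Main obstacle.} The crux, where the inequalities $m-n-\ell(\alpha)\ge p-q\ge\ell(\beta)$ must be used at full strength, is controlling the \emph{higher} cohomology once the odd directions enter: one must show that the exterior-algebra factor from the super geometry, tensored against the Berele--Regev expansions, never produces a Bott cancellation or a cohomological shift beyond what is already present for $\cO_X$ --- equivalently, that a certain spectral sequence degenerates and the $\Tor$ groups over $\bC[\mathbb{A}]$ stay parallel to those attached to $\cO_X$. A secondary, genuinely representation-theoretic difficulty is that $\bS_\alpha(\bC^{m|n})$ need not be irreducible over $\GL(m|n)$, so pinning $\rH^0(X;\cE)$ down as the correct irreducible rational Schur functor --- rather than merely a module with the right character --- again uses the hypothesis and requires care with the mixed/rational representation theory of $\GL(m|n)$.
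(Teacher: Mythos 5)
Your proposal shares some high-level spirit with the paper (passing to the bosonic reduction, using a collapsing map to an affine space, and comparing a twisted complex to the untwisted one), but there are several genuine gaps, and one structural problem.

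\textbf{Circular reduction.} You propose to remove the $\bS_\beta(\cR^*)$ factor by invoking Theorems~\ref{thm:relative} and~\ref{thm:partialflag}. In the paper those results are \emph{consequences} of Theorem~\ref{thm:main-Q}, which is proved for general $(\alpha,\beta)$ simultaneously. There is no independent proof of the relative or partial-flag statement given in the paper, so using them here is circular. The paper avoids this by working with two auxiliary \emph{purely even} vector spaces $E$ and $F$ of dimensions $\ell(\alpha)$ and $\ell(\beta)$, encoding both Schur factors at once via the Cauchy identity; your $F=\bC^{m-p|n-q}$ plays a different role and does not encode $\beta$ at all.

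\textbf{The super Kempf collapsing.} Because your $F$ is a super space of dimension $m-p|n-q$, the bundle $\cR^*\otimes F$ on $X$ has nonzero odd rank $p(n-q)+q(m-p)$, so the ``Koszul complex'' $\bigwedge^\bullet(\cR\otimes F^*)$ is the super exterior algebra of a super bundle with nonzero odd part and is therefore \emph{infinite}. Calling the section ``regular'' and claiming $\cO_Z$ is resolved by a finite Koszul complex is incorrect in the super setting. The paper sidesteps this exactly by first passing to $X_\bos$ via the $\cJ$-adic filtration, at which point the exterior algebra factor $\bigwedge^\bullet(\cJ/\cJ^2)$ is a genuine finite Koszul complex on the non-super scheme $X_\bos$; only then does it build a Kempf collapsing $Y\to\wt{W}$ over $X_\bos$. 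Doing the collapsing over $X$ itself, as you propose, reintroduces the super complications you are trying to eliminate.

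\textbf{The unproven core.} Your argument rests on the claim that ``the hypothesis \emph{should} force the Bott algorithm to behave identically for $\bS_\alpha\cQ$ and for $\cO_X$\,: every chamber contributing to the $\bS_\alpha\cQ$-side arises from one on the $\cO_X$-side by a single dominant translation, in the same cohomological degree and with no extra cancellation.'' This is not an incidental lemma --- it \emph{is} the theorem. The paper does not prove such a chamber-matching statement at all; instead it replaces the Bott-combinatorics entirely with a geometric argument: it shows the ideal defining $Z\subset\wt{W}$ is a complete intersection (Proposition~\ref{prop:piY-CI}), that $\wt{Z}$ and $\wt{\cZ}$ have rational singularities (Proposition~\ref{prop:fact-ring-vanish}), and that $\pi_*\cO_Y=\cO_{\wt{Z}}$ with vanishing higher direct images; the explicit Tor computation in Corollary~\ref{cor:TorwtS}, combined with the concentration of cohomology in even degree, then forces the $\cJ$-adic spectral sequence to degenerate. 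None of this machinery appears in your proposal, so the heart of the argument is missing; you yourself flag this as the ``crux.''

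\textbf{Irreducibility of $\rH^0$.} You assert that the distinguished subquotient equals the rational Schur functor and is ``irreducible in this range'' without argument. In the paper this is Theorem~\ref{thm:schur-irred}, an independent and substantial piece of work: it compares the composite supersymmetric Schur character with the indecomposables of \cite{comes-wilson} via cap diagrams, and then rules out extra composition factors by a central-character analysis using the Littlewood--Richardson constraints of Proposition~\ref{prop:rational-schur-coord}. Nothing in your outline replaces this, and indeed the paper gives an example (Example~\ref{ex:ex3}) where irreducibility fails just outside the hypothesis range, so the claim genuinely requires the inequality and a nontrivial argument.

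In short, the proposal correctly identifies where the difficulty lies but does not resolve it; the reduction step is circular; and the super Koszul complex over $X$ is infinite, so the determinantal-variety framework you set up is not well founded without first passing to $X_\bos$, which is the first move the paper actually makes.
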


While not true in general, under our assumptions on $p,q,m,n$, the $\GL(m|n)$-action on $\rH^\bullet(X;\cO_X)$ is trivial.

The rational Schur functor is an explicit construction that we will detail in \S\ref{sec:rational-schur}, which can be read independently from the rest of the article. We point out that irreducibility does not always hold, unlike the non-super case. The proof that they are irreducible in our cases of interest might be of independent interest. Furthermore, their characters are instances of composite supersymmetric Schur polynomials.

We think of this isomorphism as being a ``factorization'' of the cohomology of $\cE$ which mixes the features from the classical Borel--Weil theorem with genuinely new features in the super setting (i.e., the fact that $\rH^\bullet(X;\cO_X)$ is a non-trivial algebra). We note that in the non-super setting (when $n=0$, and hence $q=0$), the assumptions can be written as $m-p \ge \ell(\alpha)$ and $p \ge \ell(\beta)$, which are equivalent to the sheaf $\cE$ being nonzero, so the conditions are superfluous. However, when $n>0$, these conditions are genuine restrictions, and when they are loosened, various conclusions can fail (see \S\ref{sec:examples}).

We note that a similar theorem holds for orthosymplectic Grassmannians, but this will be handled in a subsequent article \cite{bwfact2}.

~

The rest of the introduction is devoted to explaining the main ideas that go into the proof of our result. We will first explain the main idea from \cite{superres} since our ideas build on it in a few different ways.

The main idea in \cite{superres} used to compute $\rH^\bullet(X;\cO_X)$ involves degenerations. Namely, let $X$ be a  noetherian superscheme and let $\cJ \subseteq \cO_X$ denote the ideal sheaf generated by all odd degree elements in its structure sheaf $\cO_X$. For every finitely generated $\cO_X$-module $\cM$, we consider the $\cJ$-adic filtration, which gives a convergent spectral sequence starting from the cohomology of its associated graded sheaf $\gr(\cM) = \bigoplus_{i \ge 0} \cJ^i\cM/\cJ^{i+1}\cM$ and ending with the cohomology of $\cM$. The advantage of this is that $\gr(\cM)$ is a sheaf over a scheme $X_\bos$, the bosonic reduction of $X$, which has the same underlying topological space as $X$ but now with structure sheaf $\cO_X/\cJ$.

When $X$ is a super Grassmannian $\Gr(p|q,\bC^{m|n})$, its bosonic reduction is a product of two Grassmannians $\Gr(p,\bC^m) \times \Gr(q,\bC^n)$, and one can at least attempt to compute the input to this spectral sequence using classical tools, such as the Borel--Weil--Bott theorem. The combinatorics becomes cumbersome to keep track of quite quickly, but we can use a different approach. The associated graded sheaf of $\cO_X$ is an exterior algebra $\bigwedge^\bullet(\cJ/\cJ^2)$, and in commutative algebraic contexts, it is natural for exterior algebras to appear as Koszul complexes.

This interpretation can be exploited: $\cJ/\cJ^2$ is naturally a subbundle of the trivial bundle $\fgl(m|n)^*_1 \times X_\bos$, where $\fgl(m|n)_1$ is the odd component of the Lie superalgebra $\fgl(m|n)$, so we can interpret $\cJ/\cJ^2$ as (locally linear) equations cutting out a subbundle $Y_0$ in $\fgl(m|n)_1 \times X_\bos$ (in \cite{superres}, the space is called $Y$, but we will use $Y$ for a variant of this construction in this article). So we have a projection map $\pi \colon Y_0 \to \fgl(m|n)_1$. (This is an example of a Kempf collapsing, or from another perspective, a super analogue of the Grothendieck--Springer resolution.)

Homologically speaking, we can equate the Koszul complex $\bigwedge^\bullet(\cJ/\cJ^2)$ with the structure sheaf $\cO_{Y_0}$, so that the artifacts of the super structure are recast as something in the realm of non-super algebraic geometry. This heuristic can be made concrete: by cohomology and base change, the cohomology groups of $\bigwedge^\bullet(\cJ/\cJ^2)$ compute the Tor groups of the (derived) pushforward of $\cO_{Y_0}$ along $\pi$. Part of the main result of \cite{superres} is that the higher direct images in fact vanish, and the pushforward of $\cO_{Y_0}$ is a finite rank free module over a determinantal variety. The Tor groups of the latter are known, and the exact form of the answer implies that the original spectral sequence degenerates.

Now replace $\cO_X$ by the homogeneous bundle $\cE = \bS_\alpha(\cQ) \otimes\bS_\beta(\cR^*)$ from the main theorem. If we proceed as before, then $\gr(\cE) = (\cE/\cJ\cE) \otimes \bigwedge^\bullet(\cJ/\cJ^2)$ and we end up with a coherent sheaf $\cE/\cJ\cE$ over the space $Y_0$. Cohomology and base change again implies that the relevant cohomology groups compute the Tor groups of the derived pushforward of this coherent sheaf. It is not clear what the meaning of this coherent sheaf is, so we take a different perspective. We introduce auxiliary vector spaces $E$ and $F$ with $\dim E = (m-n)-(p-q)$ and $\dim F = p-q$. The Cauchy identity gives a natural decomposition
\[
  \Sym( E \otimes \cQ) \otimes \Sym (\cR^* \otimes F) = \bigoplus_{\alpha, \beta} \bS_\alpha E \otimes \bS_\alpha \cQ \otimes \bS_\beta (\cR^*) \otimes \bS_\beta F
\]
where the sum is over all partitions $\alpha, \beta$ such that $\ell(\alpha) \le \dim E$ and $\ell(\beta) \le \dim F$, i.e., exactly the pairs of partitions that satisfy the assumptions in the main theorem. If we keep track of the $\GL(E) \times \GL(F)$ action, then this allows us to package all of the computations together. The advantage of this is that the left side has an algebra structure. Translating to non-super notation, the left side naturally decomposes into symmetric and exterior algebras:
\begin{align*}
  \Sym(E \otimes \cQ) &= \Sym(E \otimes \cQ_0) \otimes \bigwedge^\bullet(E \otimes \cQ_1),\\
  \Sym(\cR^* \otimes F) &= \Sym(\cR^*_0 \otimes F) \otimes \bigwedge^\bullet(\cR^*_1 \otimes F).
\end{align*}

We wish to interpret this algebra in terms of the space $Y_0$ from before. A symmetric algebra has a natural interpretation as the coordinate ring of a vector space (or vector bundle), so the presence of $\Sym(E \otimes \cQ_0)$ and $\Sym(\cR_0^* \otimes F)$ suggest to work with the total space of the vector bundle $(E^* \otimes \cQ_0^*) \oplus (\cR_0 \otimes F^*)$ over $Y_0$. To incorporate the exterior algebras, we use the heuristic from before, and try to interpret them as Koszul complexes, i.e., equations defining a complete intersection. Unlike before, there is no natural way to interpret them as linear equations, but we can interpret them as quadratic equations (we omit this here due to its technical nature, a more detailed overview can be found in \S\ref{ss:outline}). This leads to a complication: linear equations (in a vector bundle) always generate a complete intersection, but for quadratic equations, this is a non-trivial condition. This constraint is responsible for the inequalities in the assumption of the main theorem.

The technicalities aside, the above construction gives us a space $Y$ over $Y_0$ and again we can consider the projection map $Y \to \fgl(m|n)_1$. The pushforward of $\cO_Y$ is no longer a finitely generated module, but we will show that a similar relation holds: the cohomology groups we are interested in compute the Tor groups of the derived pushforward of $\cO_Y$; the latter can also be explicitly computed and again the relevant spectral sequence degenerates, which will allow us to prove the main theorem.

\subsection{Outline of paper}

There are a lot of moving parts to the technical aspects glossed over above; we end the introduction with an outline of the paper.

Section~\ref{sec:bg} contains background information and does not contain any new results. We review some important concepts from super linear algebra (in particular, Schur functors) and super geometry. The last part on splitting rings is taken from \cite{superres}. As mentioned above, we will obtain some finite rank covers of (more familiar) varieties through the course of our calculations. The splitting ring construction is important for identifying what these covers are. More importantly, we will use this information to prove that these covers have rational singularities, which is highly relevant for computing the derived pushforward mentioned above.

Section~\ref{sec:CI} is devoted to studying a class of complete intersections which are variations of the variety of complexes. The variety of complexes appears as the quadratic complete intersection mentioned above in the construction of $Y$. But to understand the pushforward of $\cO_Y$, we will need to consider the more general class. We will also prove that the spaces have rational singularities. To the best of our knowledge, this construction has not been considered before in the literature.

Section~\ref{sec:rational-schur} focuses on the construction of rational Schur functors for super vector spaces. Roughly speaking, this is formed by taking a certain quotient of a tensor product $\bS_\lambda(V) \otimes \bS_\mu(V^*)$. In the non-super setting, these give non-zero irreducible representations when $\dim V \ge \ell(\lambda) + \ell(\mu)$ and, in fact, exhaust the set of all irreducible rational representations of $\GL(V)$. They can be constructed using some invariant theory related to the variety of complexes, and we show that a similar approach can be done in the super setting. We also identify their characters with the composite supersymmetric Schur polynomials.

Under the assumption $\dim V_0 - \dim V_1 \ge \ell(\lambda) + \ell(\mu)$, we show that they give irreducible representations of the general linear supergroup $\GL(V)$. This is more difficult than the non-super case for two reasons: (1) general finite-dimensional representations of $\GL(V)$ need not be semisimple, and (2) rational Schur functors can be reducible in general, so this inequality must be used in some nontrivial way.

Section~\ref{sec:main-thm} uses the previous sections to state the main theorem and prove it. A vast majority of the work is to prove that certain subvarieties are complete intersections and have rational singularities; this relies on the material in Sections~\ref{sec:bg} and~\ref{sec:CI}. The degeneration approach outlined above allows us to identify the cohomology $\rH^\bullet(X; \cE)$ as a $\GL(V_0) \times \GL(V_1)$-representation, and the material in Section~\ref{sec:rational-schur} allows us to upgrade this to an identification as a $\GL(V)$-representation.

Section~\ref{sec:misc} contains some further results and generalizations. We generalize the main theorem to the setting of relative super Grassmannians. Using this, we also deduce a generalization to super partial flag varieties using the Leray spectral sequence.

Finally, in Section~\ref{sec:examples}, we give some examples to illustrate what can go wrong if the assumptions of the main theorem do not hold. We also make some remarks about possible future extensions of this work.

\subsection{Relation to other work}

Of course, the main inspiration for this article is \cite{superres}. In the non-super case, the general idea of relating cohomology of homogeneous bundles to Tor groups goes back to Lascoux \cite{lascoux} and an expository treatment of this idea and many important examples appear in Weyman's book \cite{weyman}.

The idea of connecting Tor groups of determinantal varieties with representations of general linear Lie superalgebras goes back to work of Pragacz and Weyman \cite{pragacz-weyman} and later Akin and Weyman \cite{akin-weyman, akin-weyman2, akin-weyman3}, and subsequent work can be found in \cite{raicu-weyman, raicu-weyman2, huang}.

A variation with periplectic Lie algebras is studied in \cite{sam} and the connection to cohomology of periplectic Grassmannians will appear in \cite{superres2}. A different variation which involves infinite-dimensional representations of orthosymplectic Lie superalgebras appears in \cite{sam-osp}. The connection of this work to cohomology of orthosymplectic Grassmannians is unclear; however, an analogue of the main theorem of this article for these spaces will appear in \cite{bwfact2}.

For some other papers studying the cohomology of homogeneous bundles on homogeneous superspaces, see \cite{coulembier, gruson, penkov, penkov-serganova}.

\medskip

\noindent {\bf Acknowledgements.} We thank Abhik Pal and Andrew Snowden for helpful discussions, and an anonymous referees for their comments. The author was partially supported by NSF grant DMS-2302149.

\section{Background and notation} \label{sec:bg}

\subsection{Partitions and super linear algebra} \label{ss:linear-algebra}

If $A$ and $B$ are vector spaces (respectively, vector bundles), we use the notation $A|B$ to denote the superspace (respectively, super bundle) with even part $A$ and odd part $B$. To be clear: the notation $B$ means we treat $B$ as a vector space or bundle with no $\bZ/2$-grading, and $0|B$ means that we treat it as a space or bundle concentrated in odd degree. We will work over a field of characteristic 0. The dimension of a super vector space $E$ will be denoted $\dim E_0|\dim E_1$; we use similar notation for ranks of super vector bundles.

Given two superspaces $E,F$, we let $\hom(E,F)$ denote the space of linear maps $E \to F$. This inherits the structure of a superspace using the grading:
\begin{align*}
  \hom(E,F)_0 &= \hom(E_0,F_0) \oplus \hom(E_1,F_1),\\
  \hom(E,F)_1 &= \hom(E_0,F_1) \oplus \hom(E_1,F_0).
\end{align*}

We will assume familiarity with integer partitions, see for example \cite[\S 1.1.2]{weyman}, but state some definitions to establish notation. Partitions are weakly decreasing sequences of non-negative integers and will be denoted by Greek letters, e.g., $\lambda =(\lambda_1,\dots,\lambda_r)$. The size is $|\lambda|=\sum_i \lambda_i$, and the length is $\ell(\lambda)$, which is the number of nonzero entries in $\lambda$. The transpose partition is denoted $\lambda^T$, which is defined by $\lambda^T_i = \# \{j \mid \lambda_j \ge i\}$.

We use two partial orders: the inclusion order $\mu \subseteq \lambda$ means that $\mu_i \le \lambda_i$ for all $i$, and the \defi{dominance order} $\mu \le \lambda$ means $\sum_{i=1}^r \mu_i \le \sum_{i=1}^r \lambda_i$ for all $r$. The latter will only be used to compare partitions of the same size. If $\mu \subseteq \lambda$, we use $\lambda/\mu$ to denote a skew partition.

We will extensively use Schur functors applied to super vector spaces (bundles), see \cite[\S 2.4]{weyman} for a detailed exposition. These are defined for any skew partition $\lambda/\mu$. One notable difference: our notation $\bS_{\lambda/\mu}$ for Schur functors coincides with $L_{{\lambda^T/\mu^T}}$ from \cite{weyman}. The Littlewood--Richardson coefficients are denoted $c_{\lambda,\mu}^\nu$, and are defined via decomposition of tensor products
\[
  \bS_\lambda \otimes \bS_\mu \cong \bigoplus_\nu \bS_\nu^{\oplus c^{\nu}_{\lambda, \mu}},
\]
see \cite[\S 2.3]{weyman} for more details. If $c^\nu_{\lambda,\mu} \ne 0$, then:
\begin{itemize}
\item $|\lambda|+|\mu|=|\nu|$,
\item $\lambda \subseteq\nu$ and $\mu\subseteq \nu$, and
\item $\lambda+\mu \ge \nu$.
\end{itemize}
The first two properties are well-known and follow from the many well-known combinatorial rules for computing these coefficients. The third property is likely also well-known, but see \cite[Proposition 3.2.1]{quot} for a proof.

If $V$ is any super vector bundle, then we have
\begin{align*} \label{eqn:schur-super}
  \bS_\lambda(V_0|V_1) \cong \bigoplus_{\mu \subseteq \lambda} \bS_\mu(V_0) \otimes \bS_{\lambda^T/\mu^T}(V_1),
\end{align*}
see \cite[Theorem 2.4.6]{weyman}; this is stated for modules only, but the isomorphisms are compatible with change of bases of $V_0$ and $V_1$, so they extend to vector bundles. Symmetric powers $\Sym^d$ are the special case $\lambda=(d)$, i.e., $\Sym^d = \bS_{(d)}$. Similarly, exterior powers $\bigwedge^d$ agree with $\bS_{(1,\dots,1)}$ ($d$ 1's). 

  If $V,W$ are super vector bundles, the \defi{Cauchy identity} gives decompositions (functorial in $V$ and $W$)
\[
  \Sym(V \otimes W) \cong \bigoplus_{\lambda} \bS_\lambda V \otimes \bS_\lambda W, \qquad
  \bigwedge^\bullet(V \otimes W) \cong \bigoplus_{\lambda} \bS_\lambda V \otimes \bS_{\lambda^T}W,
\]
where the sum is over all partitions $\lambda$. See \cite[Theorem 2.3.2]{weyman}; this is only stated for non-super vector spaces, but since it is independent of the dimensions of these spaces, it is actually an isomorphism of functors, so they are valid for any input, see \cite[(5.4.3)]{expos} for this perspective.

\subsection{Superschemes} \label{ss:supergeom}

For details on super algebraic geometry, see \cite{FAS, manin} and for some more specific information on the super Grassmannian which is relevant for our purposes, see \cite[\S 6.2]{superres}. Below, ``superalgebra'' will mean a $\bZ/2$-graded algebra which is graded-commutative. Just as in (non-super) algebraic geometry, superschemes can be described in two different ways: either as representable functors from the category of superalgebras to sets, or as spaces with a sheaf of superalgebras. We will use the former perspective to define our spaces of interest, but the latter perspective will be important for our main application.

Throughout, we will always work over the complex numbers $\bC$. (Everything here can also be done over an arbitrary algebraically closed field of characteristic 0 without any substantial change; this choice is largely stylistic.) In this article, we will be concerned with homogeneous spaces for the general linear supergroup $\GL(m|n)$. Let $V = V_0|V_1$ be a super vector space of dimension $m|n$.

First, $\GL(V)=\GL(m|n)$ represents the functor that assigns to a superalgebra $T$ the group of invertible linear operators of the free $T$-module $T^{m|n} = T \otimes V$. If $p|q$ is a pair of non-negative numbers, we write $p|q < p'|q'$ to mean that $p \le p'$ and $q \le q'$ and that at least one of those inequalities is strict. Given a sequence $p_1|q_1 < p_2|q_2 < \cdots < p_r|q_r < m|n$, we have a functor that assigns to each superalgebra $T$ the set of sequences of $T$-submodules $R_1 \subseteq R_2 \subseteq \cdots \subseteq R_r \subseteq T^{m|n}$ such that each $R_i$ is locally a summand of $T^{m|n}$ of rank $p_i|q_i$. This functor is representable by a superscheme which we will call the \defi{super partial flag variety} and denote by ${\bf Fl}(\bp|\bq, V)$ or ${\bf Fl}(\bp|\bq, m|n)$. Of particular interest for us will be the case when $r=1$, in which case we get the \defi{super Grassmannian}, which we denote by $\Gr(p|q,V)$ (where $p=p_1$ and $q=q_1$).

Each superscheme $X$ can also be thought as a space equipped with a sheaf of superalgebras $\cO_X$. Let $\cJ \subset \cO_X$ be the ideal sheaf generated by all odd degree elements. The quotient $\cO_X/\cJ$ is a sheaf of commutative algebras, and following \cite{FAS}, we denote the corresponding ringed space (in fact, it is a scheme) as $X_\bos$, the \defi{bosonic reduction} of $X$.

Given an $\cO_X$-module $\cM$, it inherits a $\cJ$-adic filtration: $\cM \supset \cJ \cM \supset \cJ^2 \cM \supset \cdots$ and we let
\[
  \gr \cM = \bigoplus_{i \ge 0} \cJ^i M / \cJ^{i+1} M
\]
denote the associated graded sheaf with respect to the $\cJ$-adic filtration. This filtration gives a spectral sequence on cohomology:
\[
  \rE^{p,q}_1 = \rH^p(X_\bos; \gr^q \cM) \Longrightarrow \rH^p(X; \cM),
\]
and we say that $\cM$ is \defi{$\cJ$-formal} if this spectral sequence is convergent (in cases of interest, the $\cJ$-adic filtration is finite, so this is automatic) and degenerates on the first page; in particular, if $\cM$ is $\cJ$-formal, then for all $p$ have an isomorphism of vector spaces
\[
  \rH^p(X; \cM) \cong \rH^p(X_\bos; \gr \cM).
\]

\begin{remark}
  A consequence of the main theorem of \cite{superres} is that the structure sheaf of a super Grassmannian is $\cJ$-formal. But this property can fail for general vector bundles on super Grassmannians, see Example~\ref{ex:ex2} for one such example.
\end{remark}

We get a surjective map
\[
  \bigwedge^\bullet(\cJ/\cJ^2) \to \gr \cO_X,
\]
and a superscheme $X$ is called smooth if $X_\bos$ is smooth, $\cJ/\cJ^2$ is a locally free sheaf on $X_\bos$, and this map is an isomorphism. If $\cE$ is a locally free sheaf over a smooth superscheme $X$, then
\[
  \gr \cE \cong (\cE/\cJ \cE) \otimes \bigwedge^\bullet(\cJ/\cJ^2)
\]
(see \cite[\S 3, Proposition 14]{manin}). We will write $\gr^0 \cE = \cE/\cJ \cE$. By standard properties of tensor products, given locally free sheaves $\cE$ and $\cF$ over $X$, we have
\[
  \gr^0 (\cE \otimes_{\cO_X} \cF) \cong \gr^0 \cE \otimes_{\cO_{X_\bos}} \gr^0 \cF.
\]
Since the construction of Schur functors are compatible with base change (this follows from their definition as the image of a composition of maps built out of exterior power comultiplication and symmetric power multiplication, as in \cite[\S 2.4]{weyman}), we also have
\[
  \gr^0(\bS_\lambda(\cE)) \cong \bS_\lambda(\gr^0\cE).
\]

The bosonic reduction of $X = {\bf Fl}(\bp|\bq, m|n)$ is the product of two partial flag varieties ${\bf Fl}(\bp,m) \times {\bf Fl}(\bq,n)$. The sequences $\bp$ and $\bq$ may have repetitions, but for convenience, we will remember this information.

We can express a super partial flag variety $X$ as a coset space $\GL(V)/\bP$ where $\bP$ is the stabilizer of a particular flag (see \cite{masuoka, MZ} for details on quotient spaces of supergroups). Hence the super partial flag varieties are all smooth \cite[Proposition 4.16]{masuoka}. Let $\fg = \fgl(V)$ be the Lie superalgebra of $\GL(V)$ and let $\fp$ be the Lie superalgebra of $\bP$. Then $\cJ/\cJ^2$ is the homogeneous bundle on $X_\bos$ associated to the $\bP_\bos$-module $(\fg_1/\fp_1)^*$ (via the standard induction functor from $\bP_\bos$-modules to homogeneous bundles over $\GL(V)_\bos/\bP_\bos$) \cite[Proposition 4.18]{masuoka}.

In particular, we can think of $\cJ/\cJ^2$ as (locally) linear equations inside of the trivial bundle $\fg_1 \times X_\bos$. The zero locus of these equations is the total space of the homogeneous bundle on $X_\bos$ associated to $\fp_1^*$; denote it by $Y$. The space $Y$ can be described as the space of tuples $(R_{0,\bullet}, R_{1,\bullet}, f, g)$ where
\begin{itemize}
\item $R_{0,\bullet} \in {\bf Fl}(\bp,V_0)$, 
\item $R_{1,\bullet} \in {\bf Fl}(\bq, V_1)$, 
\item $f \colon V_0 \to V_1$ is a linear map such that $f(R_{0,i}) \subseteq R_{1,i}$ for all $i$, and  
\item $g \colon V_1 \to V_0$ is a linear map such that $f(R_{1,i}) \subseteq R_{0,i}$ for all $i$.
\end{itemize}

Now consider the case of the super Grassmannian $\Gr(p|q,V)$. There is a tautological exact sequence
\[
  0 \to \cR \to V \times \Gr(p|q,V) \to \cQ \to 0.
\]
Then $X_\bos = \Gr(p,V_0) \times \Gr(q,V_1)$. Let $\cR_0$ denote the rank $p$ tautological subbundle of $V_0 \times \Gr(p,V_0)$, and let $\cQ_0$ be its quotient. Define $\cR_1$ and $\cQ_1$ similarly. We extend this notation to also include their pullbacks to $X_\bos$. Then in this case, we have
\[
  \cJ / \cJ^2 \cong (\cR_0 \otimes \cQ_1^*) \oplus (\cQ_0^* \otimes \cR_1)
\]
and $\gr^0 \cR = \cR_0 \oplus \cR_1$ and $\gr^0 \cQ= \cQ_0 \oplus \cQ_1$ and similarly for their duals. Putting everything together, for any two partitions $\alpha,\beta$ we have
\[
  \gr( \bS_\alpha \cQ \otimes \bS_\beta(\cR^*)) = \bS_\alpha(\cQ_0|\cQ_1) \otimes \bS_\beta(\cR^*_0|\cR^*_1) \otimes \bigwedge^\bullet((\cR_0 \otimes \cQ_1^*) \oplus (\cQ_0^* \otimes \cR_1)),
\]
which will be used later in this paper.

\subsection{Splitting rings and factorization rings} \label{ss:fact}

In this section, we will review the constructions and results from \cite[\S 3]{superres} that are relevant for this paper.

Let $A$ be a ring and let $f(u) = u^n + \sum_{i=0}^{n-1} a_{n-i} u^i$ be a monic polynomial with coefficients in $A$. The \defi{splitting ring} of $f$, denoted $\Split_A(f)$, is the quotient of $A[\xi_1,\dots,\xi_n]$ obtained by identifying the coefficients of the polynomials:
\[
  f(u) = (u-\xi_1) \cdots (u-\xi_n).
\]
Then $\Split_A(f)$ is a free $A$-module of rank $n!$ \cite[Proposition 3.1]{superres}. There is a natural $\fS_n$-action on $\Split_A(f)$ by permuting the $\xi_i$.

If $A$ is graded and $\deg(a_i) = i \deg (a_1)$ for all $i$, then $\Split_A(f)$ inherits a grading by setting $\deg(\xi_i) = \deg(a_1)$ for all $i$. From the definition, it follows that if $\bk$ is an algebraically closed field, then a homomorphism $\Split_A(f) \to \bk$ consists of a homomorphism $\phi \colon A \to \bk$ together with an ordering of the roots of $\phi(f)$ (the polynomial obtained from applying $\phi$ to the coefficients of $f$).

We will need to know when $\Split_A(f)$ is normal, and for that, we recall \cite[Proposition 3.10]{superres}. Let $\Delta \in A$ be the discriminant of $f$. We define
\[
  V(\Delta, \partial \Delta) = \{ \fp\in \Spec A \mid \Delta \in \fp^2 A_\fp\}.
\]

\begin{proposition} \label{prop:split-normal}
  $\Split_A(f)$ is normal if the following $3$ conditions hold:
  \begin{enumerate}
  \item $A$ is normal,
  \item $\Delta$ is a nonzerodivisor, and
  \item $V(\Delta, \partial \Delta)$ has codimension $\ge 2$ in $\Spec A$.
  \end{enumerate}
\end{proposition}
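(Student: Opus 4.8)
The plan is to verify Serre's criterion for normality, namely that $\Split_A(f)$ is ($R_1$) regular in codimension $1$ and ($S_2$) satisfies Serre's second condition. Since $\Split_A(f)$ is a finite free $A$-module of rank $n!$ by \cite[Proposition 3.1]{superres}, the map $\Split_A(f) \to A$ is finite and flat; in particular, if $A$ is normal (hence $S_2$, as it is by hypothesis (1)), then $\Split_A(f)$ is $S_2$ as well, because flatness over an $S_2$ base with fibers that are themselves $S_2$ — here the fibers are finite, hence Artinian, so trivially $S_1$ but we need a little more care — forces the total ring to be $S_2$. More precisely, I would argue via the local criterion: for a flat local homomorphism $(A,\fm) \to (B,\fn)$, one has $\depth B = \depth A + \depth(B/\fm B)$, and $\dim B = \dim A + \dim(B/\fm B)$; since the fiber $B/\fm B$ is Artinian it has depth and dimension $0$, so $\depth B = \depth A \ge \min(2, \dim A) = \min(2, \dim B)$, giving $S_2$.

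The heart of the matter is the $R_1$ condition: I must show that the non-normal locus, or rather the singular locus of $\Split_A(f)$, has codimension $\ge 2$. The natural approach is to describe where the morphism $\Spec \Split_A(f) \to \Spec A$ is étale. Over the open locus where the discriminant $\Delta$ is a unit, the polynomial $f$ has distinct roots in every fiber, so $\Split_A(f)$ is étale over $A$ there, hence regular wherever $A$ is regular; and since $A$ is normal, $A$ itself is regular in codimension $1$. So the possible failure of $R_1$ for $\Split_A(f)$ is concentrated over $V(\Delta)$. Now hypothesis (2), that $\Delta$ is a nonzerodivisor, ensures $V(\Delta)$ has codimension $\ge 1$ in $\Spec A$; this is not yet enough, since the preimage of a codimension-$1$ locus can still be codimension $1$ upstairs. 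The key refinement is that over the locus where $\Delta$ vanishes to order exactly $1$ — that is, over $V(\Delta) \setminus V(\Delta, \partial\Delta)$ — the splitting ring, while ramified, is still regular: locally the ramification looks like adjoining a square root of a uniformizer (a simple double root appearing), and $A[\sqrt{t}]$ is regular when $A$ is. Concretely, at such a prime $\fp$ the fiber of $f$ has exactly one double root and all others simple, and one checks $\Split_{A_\fp}(f)$ is a regular ring by an explicit local computation (factor off the simple roots, reducing to a quadratic whose discriminant is a local uniformizer). Therefore the non-regular locus of $\Split_A(f)$ lies over $V(\Delta, \partial\Delta)$, which by hypothesis (3) has codimension $\ge 2$ in $\Spec A$; since $\Split_A(f) \to A$ is finite, the preimage also has codimension $\ge 2$, establishing $R_1$.

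The main obstacle I anticipate is the local analysis near a prime where $\Delta$ vanishes to order exactly one: one needs to genuinely understand the structure of $\Split_A(f)$ there and confirm regularity, rather than merely étaleness, which fails. I would handle this by localizing and completing, using Hensel's lemma to split off the factor of $f$ corresponding to the simple roots (each contributing an étale, hence regular, extension), and reducing to the case $n = 2$ with $f(u) = u^2 - bu + c$ and $\Delta = b^2 - 4c$ a prime element generating a height-one prime; then $\Split_A(f) = A[\xi]/(\xi^2 - b\xi + c)$, and after the change of variable $\eta = 2\xi - b$ this is $A[\eta]/(\eta^2 - \Delta)$, which is regular precisely because $\Delta$ is part of a regular system of parameters at the relevant prime (using that $A$ is regular in codimension $1$ and $\Delta$ cuts out a codimension-$1$ locus). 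A secondary technical point is making sure the codimension bookkeeping is clean under the finite morphism — finite morphisms preserve dimension of closed subsets, so codimension is preserved for the image/preimage of these loci, and I would cite this as standard. Finally, combining $R_1$ and $S_2$ via Serre's criterion yields that $\Split_A(f)$ is normal.
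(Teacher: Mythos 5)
Your proposal is correct and follows the same strategy as the cited source (\cite[Proposition 3.10]{superres}), namely verifying Serre's criterion $(R_1)+(S_2)$: flatness of the finite map over the normal ring $A$ gives $(S_2)$ via the depth/dimension formulas, and $(R_1)$ comes from \'etaleness away from $V(\Delta)$ together with an explicit analysis showing regularity at codimension-one primes where $\Delta$ vanishes to order exactly one. The two steps you leave informal are genuinely the points needing care: (i) that order-one vanishing of $\Delta$ at a codimension-one prime forces the reduction of $f$ to have exactly one double root and all other roots simple (this follows from a Newton-polygon estimate giving $\nu(\Delta) \ge m-1$ for a root of multiplicity $m$, valid since $n!$ is invertible), and (ii) that after passing to the strict Henselization and applying Hensel's lemma to write $f = g\prod h_i$ with $g$ quadratic, the local factor of $\Split_A(f)$ at the relevant prime is isomorphic to $\Split_A(g)$ -- a rank count (both are free of rank $2$ over the Henselian DVR, and the natural map is surjective) makes this precise.
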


Now let $\bd = (d_1,d_2,\dots,d_r)$ be a sequence of non-negative integers whose sum is $n$ (we allow $d_i=0$ for convenience) and again let $f = u^n + \sum_{i=0}^{n-1} a_{n-i} u^i$ be a monic polynomial of degree $n$. Consider a polynomial ring $A[\bb] = A[b_{i,j} \mid 1 \le i \le r, \ 1 \le j \le d_i ]$ and for $i=1,\dots,r$, set $g_i = u^{d_i} + \sum_{j=0}^{d_i-1} b_{i,d_i-j} u^j$ (in the case $d_i=0$, we simply have $g_i = 1$). The \defi{$\bd$-factorization ring} of $f$, denoted $\Fact^\bd_A(f)$, is the quotient of $A[\bb]$ obtained by identifying the coefficients of the polynomials
\[
  f(u) = g_1(u)\cdots g_r(u).
\]
As with splitting rings, if $A$ is graded with $\deg(a_i)=i\deg(a_1)$ for all $i$, then $\Fact^\bd_A(f)$ inherits a grading by setting $\deg(b_{i,j}) = j \deg(a_1)$ for all $i,j$.

This construction is discussed in \cite[\S 3.7]{superres} for $r=2$. Properties for the general case follow from this case, so we will just state the relevant generalizations and give references for the proofs.

\begin{proposition} \label{prop:fact-ring}
  \begin{enumerate}
  \item Let $C = \Fact^{\bd}_A(f)$. We have a natural isomorphism of $A$-modules
    \[
      \Split_A(f) = \Split_C(g_1) \otimes_C \cdots \otimes_C \Split_C(g_r).
    \]
  \item $\Fact^\bd_A(f)$ is a free $A$-module of rank $\DS \frac{n!}{d_1! \cdots d_r!}$.
  \item If $n!$ is invertible, then $\Fact^\bd_A(f) = \Split_A(f)^G$ where $G \cong \fS_{d_1} \times \cdots \times \fS_{d_r}$ is the subgroup of $\fS_n$ where $\fS_{d_1}$ permutes $\xi_1,\dots,\xi_{d_1}$, $\fS_{d_2}$ permutes $\xi_{d_1+1},\dots, \xi_{d_1+d_2}$, etc, and the superscript denotes taking $G$-invariants. 
  \end{enumerate}
\end{proposition}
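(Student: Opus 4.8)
The plan is to induct on $r$, treating the cases $r \le 2$ as known: the case $r=1$ amounts to $\Split_A(f)$ being a free $A$-module of rank $n!$ (\cite[Proposition 3.1]{superres}) together with the identity $\Split_A(f)^{\fS_n} = A$ when $n!$ is invertible, and the case $r=2$ is exactly what is recorded in \cite[\S 3.7]{superres}. Everything below will be manifestly compatible with the gradings when $A$ is graded, so I will not comment on this further. For the inductive step, write $h = g_2 \cdots g_r$, a monic polynomial of degree $n - d_1$ over $A$, and set $C' = \Fact^{(d_1,\,n-d_1)}_A(f)$. The key point to establish is a transitivity property of the construction: a canonical isomorphism
\[
  \Fact^{(d_2,\dots,d_r)}_{C'}(h) \;\cong\; \Fact^{(d_1,d_2,\dots,d_r)}_A(f) = C .
\]
Both sides are presented over $A$ by the same generators, the coefficients of $g_1, \dots, g_r$, and the point is that the defining relations agree: equating the coefficients of $f$ with those of $g_1 g_2 \cdots g_r$ yields the same ideal whether one first matches $f$ with $g_1 h$ and then matches $h$ with $g_2 \cdots g_r$, or matches everything at once. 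I expect verifying this equality of ideals to be the one genuinely fiddly step, though it is entirely formal.

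Granting the transitivity isomorphism, parts (1) and (2) follow by combining the $r=2$ case with the inductive hypothesis over the base ring $C'$. For (2): the $r=2$ case makes $C'$ free over $A$ of rank $\binom{n}{d_1}$, and the inductive hypothesis applied with $C'$ as base ring makes $C \cong \Fact^{(d_2,\dots,d_r)}_{C'}(h)$ free over $C'$ of rank $(n-d_1)!/(d_2! \cdots d_r!)$; multiplying ranks gives $n!/(d_1! \cdots d_r!)$. For (1): the $r=2$ case gives $\Split_A(f) \cong \Split_{C'}(g_1) \otimes_{C'} \Split_{C'}(h)$ and the inductive hypothesis over $C'$ gives $\Split_{C'}(h) \cong \Split_C(g_2) \otimes_C \cdots \otimes_C \Split_C(g_r)$; since splitting rings commute with arbitrary base change (immediate from their presentation), $\Split_{C'}(g_1) \otimes_{C'} C \cong \Split_C(g_1)$, and absorbing the outer factor into the chain of tensor products over $C$ yields the claimed decomposition of $\Split_A(f)$.

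For (3), with $n!$ invertible, I would trace the $\fS_n$-action through the $r=2$ isomorphism $\Split_A(f) \cong \Split_{C'}(g_1) \otimes_{C'} \Split_{C'}(h)$, under which $\fS_{d_1}$ acts on the first factor only and $\fS_{d_2} \times \cdots \times \fS_{d_r} \subseteq \fS_{n - d_1}$ acts on the second factor only. Taking $\fS_{d_1}$-invariants and using $\Split_{C'}(g_1)^{\fS_{d_1}} = C'$ (the $r=1$ identity over $C'$, valid since $d_1!$ divides $n!$) gives $\Split_A(f)^{\fS_{d_1}} \cong \Split_{C'}(h)$; then taking $(\fS_{d_2} \times \cdots \times \fS_{d_r})$-invariants and invoking the inductive hypothesis over $C'$ identifies the result with $C$. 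The point needing care is that $(\,\cdot\,)^{\fS_{d_1}}$ commutes with the relevant $\otimes_{C'}$, which holds because all the modules involved are free (by parts (1)--(2) and \cite[Proposition 3.1]{superres}) and $\fS_{d_1}$ acts on only one tensor factor, so the Reynolds operator supplies a functorial, base-change-compatible splitting. Overall, the only step I expect to require genuine care is the transitivity isomorphism --- specifically the verification that the two groupings of defining relations generate the same ideal --- together with the bookkeeping of free base change and symmetric-group invariants in parts (1) and (3); there is no substantive new idea beyond the case $r \le 2$ from \cite{superres}.
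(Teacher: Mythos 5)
Your proof is correct and takes essentially the same route as the paper: both reduce the general $r$ to the $r\le 2$ cases from \cite{superres}, with the paper leaving the generalization implicit while you make it precise via the transitivity isomorphism $\Fact^{(d_2,\dots,d_r)}_{C'}(h)\cong \Fact^{(d_1,\dots,d_r)}_A(f)$ (correct --- the relation $h=g_2\cdots g_r$ eliminates the auxiliary coefficients of $h$) and an explicit induction. The Reynolds-operator step commuting $(-)^{\fS_{d_1}}$ past $\otimes_{C'}$ in part (3) is sound since $\fS_{d_1}$ acts on only one tensor factor and $d_1!$ is invertible.
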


\begin{proof}
  (1) The proof of \cite[Proposition 3.11(a)]{superres} can be modified to prove this statement.

  (2) This follows from (1) and the fact that each $\Split_C(g_i)$ is a free $C$-module of rank $d_i!$.

  (3) If $n!$ is invertible, then each $\Split_C(g_i)$ is a free $C[\fS_{d_i}]$-module of rank 1 \cite[Proposition 3.1(a)]{superres}, so the result follows from (1).
\end{proof}

In the case when $A$ is the Chow ring of a smooth variety $X$, the Chow ring of any partial flag bundle over $X$ can be realized as $\Fact^\bd_A(f)$ for appropriate choices of $\bd$ and $f$ (see \cite[Theorem 5.1]{GSS} for details). When $X$ is a point, this gives us an interpretation for $\Fact_A^\bd(u^n)$. To state the result, let $\wt{d}_i = d_1 + d_2 + \cdots + d_i$, and let $\Fl(\wt{\bd},\bC^n)$ denote the partial flag variety whose points parametrize sequences of subspaces $R_1 \subset \cdots \subset R_r \subset \bC^n$ where $\dim R_i = \wt{d}_i$. Below, we will use singular cohomology rather than the Chow ring since it is likely a more familiar object; these agree for $\Fl(\wt{\bd},\bC^n)$ since it has a cellular decomposition.

\begin{proposition} \label{prop:fact-flag}
  View $A$ as a graded ring which is concentrated in degree $0$. Then we have a natural isomorphism of graded rings
  \[
    \Fact^\bd_A(u^n) = A \otimes \rH^\bullet_{\rm sing}(\Fl(\wt{\bd},\bC^n); \bZ)
  \]
  where the generators $b_{i,j}$ of $\Fact^\bd_A(u^n)$ have degrees given by $\deg b_{i,j} = 2j$.
\end{proposition}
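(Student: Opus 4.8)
The plan is to prove Proposition~\ref{prop:fact-flag} by identifying $\Fact^\bd_A(u^n)$ with the cohomology ring of the partial flag variety $\Fl(\wt\bd,\bC^n)$ via its standard Borel-type presentation. Since $A$ is concentrated in degree $0$, it suffices to treat the case $A = \bZ$ (or $A = \bk$) and tensor up; the naturality of the isomorphism will be clear from the construction. So the goal reduces to exhibiting a graded ring isomorphism $\Fact^\bd_\bZ(u^n) \cong \rH^\bullet_{\rm sing}(\Fl(\wt\bd,\bC^n);\bZ)$ sending the generator $b_{i,j}$ to (a sign times) the $j$-th Chern class of the $i$-th subquotient bundle, with $\deg b_{i,j} = 2j$.

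First I would recall the classical presentation of $\rH^\bullet(\Fl(\wt\bd,\bC^n);\bZ)$. On $\Fl(\wt\bd,\bC^n)$ there is a filtration $0 = \cS_0 \subset \cS_1 \subset \cdots \subset \cS_r = \bC^n \times \Fl$ by tautological subbundles with $\rank \cS_i = \wt d_i$, with successive quotients $\cK_i = \cS_i/\cS_{i-1}$ of rank $d_i$. Writing $g_i = \sum_{j=0}^{d_i} c_j(\cK_i) u^{d_i - j}$ for the Chern polynomial of $\cK_i$, the Whitney sum formula gives $\prod_{i=1}^r g_i(u) = c(\bC^n \times \Fl)(u) = u^n$ (the trivial bundle has trivial total Chern class). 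The content of the classical theorem (due in this generality to Borel; see e.g. the references in \cite{GSS}) is that $\rH^\bullet(\Fl(\wt\bd,\bC^n);\bZ)$ is generated as a ring by the Chern classes $c_j(\cK_i)$ for $1 \le i \le r$, $1 \le j \le d_i$, subject precisely to the relations obtained by equating the coefficients of $\prod_i g_i(u)$ with those of $u^n$. Comparing this with the definition of $\Fact^\bd_\bZ(u^n)$ as the quotient of $\bZ[b_{i,j}]$ by exactly those relations (after matching $b_{i,j} \leftrightarrow c_j(\cK_i)$), we obtain a surjective graded ring homomorphism $\Fact^\bd_\bZ(u^n) \to \rH^\bullet(\Fl(\wt\bd,\bC^n);\bZ)$; the degree convention $\deg b_{i,j}=2j$ matches $c_j(\cK_i) \in \rH^{2j}$.

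To see this map is an isomorphism I would compare ranks over $\bZ$ in each graded degree, or more cheaply just compare total ranks: by Proposition~\ref{prop:fact-ring}(2), $\Fact^\bd_\bZ(u^n)$ is free of rank $n!/(d_1!\cdots d_r!)$ over $\bZ$, and $\rH^\bullet(\Fl(\wt\bd,\bC^n);\bZ)$ is free of the same rank (the number of cells in the Schubert cell decomposition, equivalently the number of cosets $\fS_n/(\fS_{d_1}\times\cdots\times\fS_{d_r})$). A surjection of finitely generated free $\bZ$-modules of equal rank is an isomorphism, which completes the argument. (Alternatively one can invoke \cite[Theorem 6.1]{GSS} directly with $X$ a point: that result realizes the Chow ring, hence the singular cohomology ring since $\Fl(\wt\bd,\bC^n)$ has a cellular decomposition, of the flag bundle as $\Fact^\bd_A(f)$ for the appropriate $f$, which is $u^n$ when $X$ is a point and the vector bundle is trivial.)

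The main obstacle is not any single hard computation but rather pinning down the precise dictionary and sign conventions so that the relations defining $\Fact^\bd_\bZ(u^n)$ match the Whitney-sum relations \emph{on the nose}: depending on whether one works with the tautological sub- or quotient bundles, and with $c(u) = \sum c_j u^j$ or $\sum c_j u^{n-j}$, the generator $b_{i,j}$ corresponds to $c_j(\cK_i)$ up to sign, and one must check the factorization $f = g_1\cdots g_r$ with $f = u^n$ is the Whitney identity $\prod_i c_u(\cK_i) = c_u(\bC^n)$ and not its inverse. Once the conventions are fixed consistently, everything else is the standard Borel presentation plus a rank count, both of which may be cited. I will also remark that the grading statement $\deg b_{i,j} = 2j$ is forced by the general convention from \S\ref{ss:fact} that $\deg a_1$ (here the ``degree of $u$'') is $2$, matching cohomological degree.
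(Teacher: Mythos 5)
Your proposal is correct. Its main argument is genuinely more self-contained than the paper's, which contents itself with citing \cite[Theorem 6.1]{GSS} in the paragraph preceding the proposition (taking $X$ to be a point, so $f=u^n$) and then observing that Chow and singular cohomology coincide for a variety with a cellular decomposition. You note this citation route as a parenthetical alternative, but your primary argument instead builds the isomorphism directly from Borel's presentation of $\rH^\bullet(\Fl(\wt\bd,\bC^n);\bZ)$ by Chern classes $c_j(\cK_i)$ of the tautological subquotients: the factorization $f=g_1\cdots g_r$ is the Whitney sum formula in disguise (after the substitution $t\mapsto 1/u$ and clearing denominators, so there are in fact no signs to worry about with the paper's convention $g_i = u^{d_i}+\sum_j b_{i,d_i-j}u^j$), and equality is forced by comparing $\bZ$-ranks via Proposition~\ref{prop:fact-ring}(2) against the number of Schubert cells. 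A small plus of your rank-count variant is that it only needs the \emph{generation} half of Borel's theorem, since the relations are then bootstrapped from the rank equality; and your reduction of the general $A$ to $A=\bZ$ via $\Fact^\bd_A(u^n)\cong A\otimes_\bZ\Fact^\bd_\bZ(u^n)$ (using that $\Fact^\bd_\bZ(u^n)$ is $\bZ$-free) is a step the paper leaves implicit. Both routes are sound.
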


When we take $A$ to be a field, for example $\bC$ as in this paper, the Hilbert series can be given in a compact form. Define $[i] = \frac{t^{2i}-1}{t^2-1} = 1 + t^2 + \cdots + t^{2i-2}$ and $[i]! = [i][i-1]\cdots[1]$. Then
\[
  \sum_i \dim_\bC \rH^i_{\rm sing}(\Fl(\wt{\bd},\bC^n); \bC) t^i = \frac{[n]!}{[d_1]! \cdots [d_r]!}.
\]
This fact is well-known, but can also be deduced from the presentation of the factorization ring given above as a complete intersection. In the special case $r=2$, we get the Grassmannian, and the formula becomes
\begin{align} \label{eqn:gr-sing-hilbert}
  \sum_i \dim_\bC \rH^i_{\rm sing}(\Gr(q,\bC^n); \bC) t^i =   \frac{\prod_{i=q+1}^n (t^{2i}-1)}{\prod_{i=1}^{n-q} (t^{2i}-1)}.
\end{align}

\section{Some complete intersections} \label{sec:CI}

Let $X$ be a finite type scheme over an algebraically closed field of characteristic 0 (but see Remark~\ref{rmk:pos-char}), and let $\cA_1$, $\cA_2$, $\cB$, $\cC_1$, $\cC_2$ be vector bundles over $X$ of ranks $a_1,a_2,b,c_1,c_2$, respectively. Let $\bX$ be the total space of the vector bundle
\[
  \hom(\cA_1, \cB) \oplus \hom(\cB, \cA_2) \oplus \hom(\cC_1, \cB) \oplus \hom(\cB, \cC_2).
\]
When we pull these bundles back to $\bX$, there are tautological maps
\[
  \alpha_1 \colon \cA_1 \to \cB, \qquad \alpha_2 \colon \cB \to \cA_2, \qquad \gamma_1 \colon \cC_1 \to \cB, \qquad \gamma_2 \colon \cB \to \cC_2.
\]
Let $Z$ denote the common scheme-theoretic zero locus of the three compositions:
\begin{itemize}
\item $\alpha_2\alpha_1 \colon \cA_1 \to \cA_2$,
\item $\gamma_2 \alpha_1 \colon \cA_1 \to \cC_2$,
\item $\alpha_2 \gamma_1 \colon \cC_1 \to \cA_2$.
\end{itemize}

\begin{proposition} \label{prop:complex-CI}
  Assume that $a_1+a_2+\max(c_1,c_2) \le b$. The map $Z \to X$ is flat, and all of the fibers are reduced, irreducible, locally complete intersections (specifically, the equations for $Z$ are regular), and have rational singularities. In particular, if $X$ is reduced,  respectively, irreducible, locally a complete intersection, normal, or has rational singularities, then the same holds for $Z$.
\end{proposition}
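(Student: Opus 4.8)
The statement is fiberwise in nature: flatness of $Z \to X$ plus the assertion that every fiber is a reduced, irreducible, local complete intersection with rational singularities. Once the fiber statements are established, the transfer of properties to $Z$ (when $X$ itself has them) is formal: flatness with geometrically reduced/irreducible/CM/lci fibers propagates these properties from base to total space by standard results (e.g.\ \stacks{045U} and the local criterion for lci via regular sequences relative to the base), and for rational singularities one uses that a flat morphism with rationally singular fibers over a rationally singular base has rationally singular total space (this follows, in characteristic $0$, from Elkik-type results or from the fact that rational singularities are equivalent to being Cohen--Macaulay with a resolution having no higher direct images, combined with base change). So the plan is to reduce immediately to the case where $X = \Spec \bk$ is a point, with $\bk$ algebraically closed of characteristic $0$, and $\cA_1, \cA_2, \cB, \cC_1, \cC_2$ are just vector spaces of the given dimensions.

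In the point case, $\bX = \hom(\cA_1,\cB) \oplus \hom(\cB,\cA_2) \oplus \hom(\cC_1,\cB) \oplus \hom(\cB,\cC_2)$ is an affine space, and $Z$ is cut out by the $a_1 a_2 + a_1 c_2 + c_1 a_2$ bilinear equations coming from the three compositions $\alpha_2\alpha_1$, $\gamma_2\alpha_1$, $\alpha_2\gamma_1$. First I would identify $Z$ with a \emph{variety of complexes}: regarding $\cA_1 \oplus \cC_1$, $\cB$, and $\cA_2 \oplus \cC_2$ as the three terms of a would-be complex, the condition that the two consecutive maps compose to zero is \emph{not quite} what we have here (the ``off-diagonal'' composition $\gamma_2\gamma_1 \colon \cC_1 \to \cC_2$ is not required to vanish), so $Z$ is a slightly larger space than the standard variety of complexes $\cA_1\oplus\cC_1 \to \cB \to \cA_2\oplus\cC_2$. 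The cleanest route is to invoke Section~\ref{sec:CI}'s own development: Proposition~\ref{prop:complex-CI} is the foundational case of the ``variations on the variety of complexes'' that Section~\ref{sec:CI} is built to study, so I expect the proof to proceed directly rather than by citing the literature on varieties of complexes. Concretely, the plan is: (a) exhibit an explicit candidate irreducible component of $Z$ of the expected codimension $a_1a_2 + a_1 c_2 + c_1 a_2$ by parametrizing it via a Kempf-type collapsing; (b) show $Z$ is a complete intersection of that codimension, hence equidimensional and automatically Cohen--Macaulay; (c) conclude reducedness and irreducibility; (d) produce a resolution of singularities and show it is rational.

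For step (a), the natural resolution is the incidence variety: fix a subspace $B' \subseteq \cB$ of dimension $a_1 + \max(c_1,c_2)$ (or better, work with a flag $\im\alpha_1 + \im\gamma_1 \subseteq \ker\alpha_2 \cap \ker\gamma_2$, whose expected dimension is exactly where the inequality $a_1 + a_2 + \max(c_1,c_2) \le b$ enters), and let $\wt Z$ be the variety of tuples where one also records an appropriate subspace $W \subseteq \cB$ sandwiched between the images and the kernels; then $\wt Z$ fibers over a product of Grassmannians with vector-bundle fibers, so it is smooth and irreducible, and the projection $\wt Z \to Z$ is birational onto its image. The inequality guarantees the generic such $W$ has the right dimension so that the map is surjective and $\dim \wt Z$ equals the expected dimension of $Z$. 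For step (d), rationality of singularities would follow from showing $R\pi_* \cO_{\wt Z} = \cO_Z$, which I would attack by the standard Grassmannian/Bott-vanishing computation: the fibers of $\pi$ over the various orbit strata are (products of) smaller Grassmannians or flag varieties, whose structure sheaves have no higher cohomology, combined with a Cohen--Macaulayness/dimension count to rule out contributions to $R^{>0}\pi_*$.

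The main obstacle I anticipate is step (b)--(c): proving that the bilinear equations actually form a regular sequence (equivalently, that $\dim Z$ equals the affine dimension minus the number of equations) rather than cutting out something of too-large dimension. This is exactly the point where the hypothesis $a_1 + a_2 + \max(c_1,c_2) \le b$ must be used in an essential way, and it is the phenomenon flagged in the introduction that ``quadratic equations need not form a complete intersection.'' The standard tool is to bound $\dim Z$ from above by stratifying according to the ranks of $\alpha_1, \gamma_1, \alpha_2, \gamma_2$ (or the dimensions of the relevant image/kernel subspaces) and computing the dimension of each stratum via the Grassmannian incidence picture from step (a); one then checks that every stratum has dimension at most the expected one, with equality on the generic stratum, and that the inequality on $b$ is precisely what makes this numerical estimate go through. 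Once the dimension count and the resolution are in hand, Cohen--Macaulayness is automatic from the complete-intersection property, reducedness follows from generic smoothness (the generic point of the unique top-dimensional stratum is a smooth point) plus $R_0 + S_1$, irreducibility follows from connectedness of $\wt Z$ together with equidimensionality, and rational singularities follow from the resolution in step (d); the passage back to general $X$ is then the formal flatness argument described in the first paragraph.
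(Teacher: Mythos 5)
Your high-level strategy agrees with the paper's: reduce to a point, resolve $Z_{\rm red}$ by a Kempf collapsing, count dimensions to get the complete-intersection property (hence Cohen--Macaulayness), then establish reducedness, normality, and rational singularities. But the concrete candidate you propose for the resolution is wrong, and fixing it is the real content of the proof.

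You suggest recording either a single subspace of dimension $a_1+\max(c_1,c_2)$, or a flag squeezed between $\im\alpha_1+\im\gamma_1$ and $\ker\alpha_2\cap\ker\gamma_2$. Neither matches the geometry of $Z$, because the equations do \emph{not} force $\gamma_2\gamma_1=0$, so $\im\gamma_1\not\subseteq\ker\gamma_2$ in general; the containment you wrote simply fails on $Z$. The equations give the \emph{asymmetric} containments $\im\alpha_1\subseteq\ker\alpha_2\cap\ker\gamma_2$ and $\im\alpha_1+\im\gamma_1\subseteq\ker\alpha_2$. The resolution the paper uses reflects exactly this asymmetry: $\wt Z$ lives over the two-step flag variety $\Fl(a_1,a_1+c_1,B)$, parametrizing $R\subset S\subset B$ with $\rank R=a_1$, $\rank S=a_1+c_1$, and one requires $\im\alpha_1\subseteq R\subseteq\ker\alpha_2\cap\ker\gamma_2$ and $R+\im\gamma_1\subseteq S\subseteq\ker\alpha_2$. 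The two numerical constraints needed to fit these subspaces, $a_1\le b-a_2-c_2$ and $a_1+c_1\le b-a_2$, are jointly equivalent to $a_1+a_2+\max(c_1,c_2)\le b$; this is where the $\max$ in the hypothesis actually comes from, which your symmetric picture cannot produce.

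Two smaller points. First, your reducedness argument via ``generic smoothness'' needs to be made precise: the resolution $\pi\colon\wt Z\to Z_{\rm red}$ only sees the reduced scheme, so to conclude that $Z$ itself (with its scheme structure from the defining equations) is reduced you must exhibit a point where the Jacobian of the defining equations has maximal rank; the paper does this explicitly by degenerating $\phi_2$ to $0$ on the open set where $\phi_1$ has full rank, whereupon the Jacobian is visibly a $0$--$1$ matrix of full rank. Second, your plan to establish rational singularities by a direct Bott-vanishing computation of $R\pi_*\cO_{\wt Z}$ is plausible but more laborious than what the paper does; having shown $Z$ is a normal complete intersection, it suffices to check that the locus where the map from the resolution fails to be an isomorphism has codimension $\ge 2$ on both sides (the two rank-drop conditions on $\phi_1$ and $\phi_2$ involve disjoint variables), after which the cited criterion from the earlier paper gives rational singularities without any cohomology computation.
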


\begin{proof}
  Flatness of $Z\to X$ will follow once we show that the equations defining $Z$ are regular since then the fibers are finitely generated graded algebras with constant Hilbert function (and so $\cO_Z$ is locally free over $\cO_X$). So it suffices to handle the case that $X$ is a point and $A_1, A_2, B, C_1, C_2$ are all vector spaces since all of the properties we listed are local and closed under taking products. In what follows, we let $\phi_1 \colon A_1 \oplus C_1 \to B$ denote the sum $\alpha_1 + \gamma_1$ and let $\phi_2 \colon B \to A_2 \oplus C_2$ denote the sum $\alpha_2 \oplus \gamma_2$.

Our proof will use some interactions between the stated properties which rely on one another, so here is an outline before we begin:
\begin{enumerate}
\item We construct a desingularization $\pi \colon \wt{Z} \to Z_{\rm red}$ of the reduced scheme of $Z$, by a vector bundle over a partial flag variety. This implies that $Z$ is irreducible and lets us show that $\codim(Z,X) = a_1a_2+a_1c_2+c_1a_2$. Since this is the number of equations defining $Z$, we see that it is a complete intersection, and in particular, Cohen--Macaulay.
  
\item Next, we construct a closed subvariety $Z^0$ such that the Jacobian matrix of these equations has maximal possible rank $\codim(Z,X)$ at points of $U = Z \setminus Z^0$.
  
\item We directly show that $\codim(\pi^{-1}(Z^0),\wt{Z}) \ge 2$. This implies that $\codim(Z^0,Z) \ge 2$ and then by Serre's criterion, we conclude that $Z$ is reduced \stacks{031R} and normal \stacks{031S}. This will also imply that $Z$ has rational singularities by \cite[Proposition 4.1]{superres}.
\end{enumerate}
  
 {\bf Step 1:}  We construct a desingularization of $Z_{\rm red}$ as follows: first consider the partial flag variety $\bF = {\bf Fl}(a_1, a_1+c_1, B)$ whose points parametrize subspaces $R \subset S \subset B$ where $\rank R = a_1$ and $\rank S = a_1+c_1$. Let $\cR \subset \cS$ be the tautological subbundles of the trivial bundle $B$ over $\bF$. Let $\eta$ be the vector bundle
  \[
   \eta= \hom(A_1, \cR)^* \oplus \hom(C_1, \cS)^* \oplus \hom(B/\cS, A_2)^* \oplus \hom(B/\cR, C_2)^*,
  \]
  and let $\wt{Z} = \Spec_{\bF}(\Sym(\eta))$. We claim that the image of the projection map $\pi \colon \wt{Z} \to \bX$ is $Z_{\rm red}$. It is clear that $\pi(\wt{Z}) \subseteq Z_{\rm red}$, so we will show the other containment. Given $(\alpha_1,\alpha_2, \gamma_1, \gamma_2) \in Z_{\rm red}$, we have a containment $\alpha_1(A_1) \subseteq \ker \phi_2$. Let $R$ be any $a_1$-dimensional subspace that sits between these spaces. This is possible since
  \[
    \dim \alpha_1(A_1) \le a_1 \le b-a_2-c_2 \le \dim \ker \phi_2.
  \]
  Similarly, we have $R + \phi_1(A_1 \oplus C_1) \subseteq \ker \alpha_2$ and we choose $S$ to be any $(a_1+c_1)$-dimensional subspace that sits in between these spaces. Again, this is possible since
  \[
    \dim (R + \phi_1(A_1 \oplus C_1)) \le a_1 + c_1 \le b - a_2 \le \dim \ker \alpha_2.
  \]
  By construction, $\alpha_1(A_1) \subseteq R \subseteq \ker \phi_2$ and $\gamma_1(C_1) \subseteq S \subseteq \ker \alpha_2$, and so this point is in the image of $\pi$. In particular, $Z_{\rm red} = \pi(\wt{Z})$, and we see that $Z$ is irreducible.

  The restriction $\pi \colon \wt{Z} \to Z_{\rm red}$ is birational: there is a nonempty open subset of $Z$ consisting of points where $\rank \phi_1 = a_1+c_1$, so the choice of flag $R \subset S$ as above is unique: take $R = \alpha_1(A_1)$ and $S = \phi_1(A_1 \oplus C_1)$. In particular, it is dense since we have shown that $Z$ is irreducible. We can conclude that
  \begin{align*}
    \dim Z &= \dim \wt{Z}\\
           &= \dim \bF + \rank (\hom(A_1, \cR) \oplus \hom(C_1, \cS) \oplus \hom(B/\cS, A_2) \oplus \hom(B/\cR,C_2)) \\
           &= a_1c_1 + (a_1+c_1)(b-a_1-c_1) + a_1^2 + c_1(a_1+c_1) + (b-a_1-c_1)a_2 + (b-a_1)c_2\\
           &= (a_1b + c_1b + ba_2 + bc_2) - a_1a_2 - c_1a_2 - a_1c_2.
  \end{align*}
  In particular, $\codim (Z,\bX) = a_1a_2 + c_1a_2 + a_1c_2$. But $Z$ is defined by $a_1a_2+c_1a_2+a_1c_2$ many equations (the entries of the compositions $\alpha_2\alpha_1$, $\gamma_2\alpha_1$, and $\alpha_2\gamma_1$), so $Z$ is a complete intersection; let $I$ be its defining ideal.

  \medskip
  
  {\bf Step 2:} Let $U$ be the open subset of $Z$ consisting of points such that
  \begin{itemize}
  \item $\rank \phi_1 = a_1+c_1$, or
  \item $\rank \phi_2 = a_2+c_2$.
  \end{itemize}
  If $x \in U$, we claim that the Jacobian matrix of $I$ at $x$ has maximal rank $a_1a_2 + c_1a_2 + a_1c_2$.
  
  In the first case, up to a change of basis we may assume that $\phi_1 = \begin{bmatrix}{\rm id}_{a_1+c_1} \\ 0\end{bmatrix}$ and by semicontinuity, we may assume that $\phi_2=0$ (i.e., by considering the degeneration obtained by scaling the entries in $\phi_2$). For this choice, the Jacobian matrix has $a_1a_2+a_1c_2+c_1a_2$ 1's (each in different rows and columns) and 0's elsewhere. The argument for the case when $\rank \phi_2 = a_2+c_2$ is similar.

  \medskip
  
{\bf Step 3:}  Let $Z^0 = Z \setminus U$. We claim that $\codim(\pi^{-1}(Z^0), \tilde{Z}) \ge 2$. This is defined by 2 rank conditions: both $\phi_1$ and $\phi_2$ fail to have maximal rank. Since both of them generically obtain maximal rank, and they involve disjoint sets of variables, each condition has codimension 1. In particular $\codim (Z^0, Z) \ge 2$ as well. Since $Z^0$ contains the singular locus of $Z$, we conclude that $Z$ is regular in codimension 2, and hence is reduced and normal. In particular, we can apply \cite[Proposition 4.1]{superres} to conclude that $Z$ has rational singularities.
\end{proof}

In the special case $c_1=c_2=0$, $Z$ is an instance of the variety of complexes (see \cite{dcs} for more information) and the previous result is well-known. We record it separately for convenience.

\begin{corollary} \label{cor:complex-CI}
  Let $\cA_1,\cA_2,\cB$ be vector bundles over $X$ of ranks $a_1,a_2,b$, respectively, and assume that $a_1+a_2 \le b$. Let $Z$ be the zero locus of the tautological map $\cA_1 \to \cA_2$ over the total space of $\hom(\cA_1, \cB) \oplus \hom(\cB, \cA_2)$.

  Then the map $Z \to X$ is flat, and all of the fibers are reduced, irreducible, locally complete intersections, and have rational singularities. In particular, if $X$ is reduced, respectively, irreducible, locally a complete intersection, normal, or has rational singularities, then the same holds for $Z$.
\end{corollary}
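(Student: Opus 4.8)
The plan is essentially bookkeeping: Corollary~\ref{cor:complex-CI} is the special case of Proposition~\ref{prop:complex-CI} obtained by taking $\cC_1$ and $\cC_2$ to be the zero bundle, i.e.\ $c_1 = c_2 = 0$. First I would check that the hypothesis $a_1 + a_2 + \max(c_1,c_2) \le b$ of the proposition degenerates to exactly $a_1 + a_2 \le b$. Next, since $\cC_1 = \cC_2 = 0$, the ambient total space $\hom(\cA_1,\cB) \oplus \hom(\cB,\cA_2) \oplus \hom(\cC_1,\cB) \oplus \hom(\cB,\cC_2)$ reduces to $\hom(\cA_1,\cB) \oplus \hom(\cB,\cA_2)$, and of the three defining compositions $\alpha_2\alpha_1$, $\gamma_2\alpha_1$, $\alpha_2\gamma_1$ the latter two are maps out of or into the zero bundle, hence vacuous; so $Z$ is cut out by the single condition $\alpha_2\alpha_1 = 0$, which is precisely the subvariety in the corollary. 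The conclusions---flatness of $Z \to X$ together with the fiberwise properties (reduced, irreducible, lci, rational singularities) and their transfer to $Z$ when $X$ enjoys them---are then inherited verbatim.

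Should one instead want a proof not invoking the general proposition, I would replay its three-step argument in this simpler setting. The desingularization is now a vector bundle over a Grassmannian rather than over a two-step flag variety: over $\bF = \Gr(a_1, B)$ with tautological sub $\cR \subset B$, set $\wt{Z} = \Spec_{\bF}\Sym(\hom(A_1,\cR)^* \oplus \hom(B/\cR, A_2)^*)$ and let $\pi$ be the projection to $\bX = \hom(A_1,B) \oplus \hom(B,A_2)$. The containment $\alpha_1(A_1) \subseteq \ker\alpha_2$ together with $\dim\ker\alpha_2 \ge b - a_2 \ge a_1$ shows $\pi(\wt{Z}) = Z_{\rm red}$; birationality holds over the locus $\rank\alpha_1 = a_1$ (where $R = \alpha_1(A_1)$ is forced); and a dimension count gives $\dim \wt{Z} = a_1(b - a_1) + a_1^2 + (b-a_1)a_2 = a_1 b + a_2 b - a_1 a_2$, so $\codim(Z,\bX) = a_1 a_2$, which equals the number of defining equations, making $Z$ a complete intersection. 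On the open set $U$ where $\alpha_1$ has rank $a_1$ or $\alpha_2$ has rank $a_2$, a change of basis plus a semicontinuity degeneration of the other map shows the Jacobian has maximal rank $a_1 a_2$; and the complement $Z^0 = Z \setminus U$ pulls back under $\pi$ to a locus of codimension $\ge 2$ in $\wt{Z}$ since it imposes two independent rank-drop conditions on the disjoint variable sets of $\alpha_1$ and $\alpha_2$. Serre's criterion then gives that $Z$ is reduced and normal, and \cite[Proposition~4.1]{superres} gives rational singularities; as before one first reduces to $X$ a point by locality and stability under products.

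The main point is that there is no real obstacle here: this is a specialization of an already-established result, and the only thing to verify is that the hypothesis and geometric setup degenerate correctly when the $\cC$-bundles vanish. If one takes the self-contained route, the only mildly delicate step, exactly as in the proposition, is showing $\codim(\pi^{-1}(Z^0), \wt{Z}) \ge 2$ so that Serre's $R_1$ condition (hence reducedness and normality) applies---but in this case it is even more transparent, because the two rank conditions visibly involve the disjoint coordinates of $\alpha_1$ and $\alpha_2$.
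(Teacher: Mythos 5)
Your proposal is correct and matches the paper's treatment exactly: the paper states this as the specialization $c_1 = c_2 = 0$ of Proposition~\ref{prop:complex-CI} (noting that $Z$ is then the classical variety of complexes) and gives no separate proof, which is precisely your first paragraph. Your optional self-contained replay of the three-step argument over $\Gr(a_1,B)$ is also sound, though not needed once the specialization is observed.
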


\begin{remark} \label{rmk:complex-CI}
  We can relax the inequality slightly and consider the case $b=a_1+a_2-1$. Then $Z$ is still a complete intersection and reduced, but it is reducible since $\rank \alpha_1 + \rank \alpha_2 \le b$. In particular, it has two irreducible components, one defined by the condition $\rank \alpha_1 < a_1$ and the other defined by the condition $\rank \alpha_2 < a_2$. The intersection of these components has codimension 1, so $Z$ fails to be regular in codimension 2. The relevant dimension counts can be done using vector bundle desingularizations as in the previous proof.
\end{remark}

We mention one more corollary in the case that $c_1=c_2=0$. Consider the Grassmannian $\Gr(p,V)$ with tautological exact sequence
\[
  0 \to \cR \to V \times \Gr(p,V) \to \cQ \to 0.
\]
  By functoriality and basic properties of Schur functors, for partitions $\alpha$ and $\beta$, we have canonical surjections
  \[
    \bS_\alpha V \times \Gr(p,V) \to \bS_\alpha \cQ \to 0, \qquad \bS_\beta V^* \times \Gr(p,V) \to \bS_\beta(\cR^*) \to 0,
  \]
  and hence a canonical surjection
  \begin{align} \label{eqn:schur-surj1}
    (\bS_\alpha V \otimes \bS_\beta (V^*)) \times \Gr(p,V) \to \bS_\alpha \cQ \otimes \bS_\beta(\cR^*) \to 0.
  \end{align}

\begin{corollary} \label{cor:rat-schur-surj}
  The map on sections induced by \eqref{eqn:schur-surj1} is surjective.
\end{corollary}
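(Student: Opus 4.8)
The plan is to identify the map on sections, component by component, with a single surjection of coordinate rings attached to a variety of complexes, which is controlled by Corollary~\ref{cor:complex-CI}.

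First I would dispose of the trivial cases: if $\ell(\alpha) > \rank\cQ = \dim V - p$ or $\ell(\beta) > p$, then $\bS_\alpha\cQ \otimes \bS_\beta(\cR^*) = 0$ and there is nothing to prove, so assume $\ell(\alpha) \le \dim V - p$ and $\ell(\beta) \le p$. Introduce auxiliary vector spaces $E, F$ with $\dim E = \dim V - p$ and $\dim F = p$, let $\bX = \hom(F, V) \oplus \hom(V, E^*)$ with tautological maps $\psi\colon F \to V$ and $\phi\colon V \to E^*$, and let $Z \subseteq \bX$ be the zero locus of $\phi\psi \colon F \to E^*$. By Corollary~\ref{cor:complex-CI} (with $\cA_1 = F$, $\cB = V$, $\cA_2 = E^*$; note $\dim F + \dim E^* = \dim V$) the scheme $Z$ is reduced, irreducible and normal. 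Specializing the proof of Proposition~\ref{prop:complex-CI} to this setting ($c_1 = c_2 = 0$, so the flag variety occurring there is $\Gr(p,V)$), one obtains a proper birational morphism $\pi\colon \wt{Z}\to Z$, where $\wt{Z} = \Spec_{\Gr(p,V)} \Sym\big((\cQ\otimes E)\oplus(\cR^*\otimes F)\big)$ is a vector bundle over $\Gr(p,V)$; concretely $\wt Z$ is the total space of the subbundle $(F^*\otimes\cR)\oplus(\cQ^*\otimes E^*)$ of the trivial bundle with fiber $\bX$ over $\Gr(p,V)$, and $\pi$ is induced by the projection to $\bX$.

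Since $\wt Z\injects\bX\times\Gr(p,V)$ is closed and $\Gr(p,V)$ is proper, $\pi$ is proper; the proof of Proposition~\ref{prop:complex-CI} shows it is also birational. As $Z$ is normal and $\wt Z$ integral, it follows that $\pi_*\cO_{\wt Z} = \cO_Z$, hence $\rH^0(\wt Z; \cO_{\wt Z}) = \bC[Z]$ and the composite
\[
  \bC[\bX] \surjects \bC[Z] \xrightarrow{\sim} \rH^0(\wt Z; \cO_{\wt Z})
\]
is surjective, the first arrow being restriction along $Z \injects \bX$. I would then expand both ends by the Cauchy identity. On the source, $\bC[\bX] = \Sym(F\otimes V^*)\otimes\Sym(V\otimes E) = \bigoplus_{\alpha,\beta} \bS_\alpha V\otimes\bS_\beta(V^*)\otimes\bS_\alpha E\otimes\bS_\beta F$. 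On the target, since $\wt Z\to\Gr(p,V)$ is an affine bundle, $\rH^0(\wt Z;\cO_{\wt Z}) = \rH^0\big(\Gr(p,V); \Sym(\cQ\otimes E)\otimes\Sym(\cR^*\otimes F)\big) = \bigoplus_{\alpha,\beta} \rH^0\big(\Gr(p,V); \bS_\alpha\cQ\otimes\bS_\beta(\cR^*)\big)\otimes\bS_\alpha E\otimes\bS_\beta F$, both sums running over $\ell(\alpha)\le\dim E$, $\ell(\beta)\le\dim F$. The composite above is $\GL(V)\times\GL(E)\times\GL(F)$-equivariant, and is obtained by applying $\Sym$ and $\rH^0(\Gr(p,V);-)$ to the surjection of vector bundles on $\Gr(p,V)$ dual to the inclusion $(F^*\otimes\cR)\oplus(\cQ^*\otimes E^*)\injects (F^*\otimes V)\oplus(V^*\otimes E^*)$ defining $\wt Z$, namely $(F\otimes V^*)\oplus(V\otimes E)\surjects(F\otimes\cR^*)\oplus(\cQ\otimes E)$. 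By functoriality of the Cauchy decomposition, its $\bS_\alpha E\otimes\bS_\beta F$-isotypic component for $\GL(E)\times\GL(F)$ is exactly the map on $\rH^0$ induced by \eqref{eqn:schur-surj1}, tensored with the identity of $\bS_\alpha E\otimes\bS_\beta F$; since an isotypic component of a surjection is a surjection and $\bS_\alpha E\otimes\bS_\beta F\ne 0$ for our $\alpha,\beta$, cancelling it gives the claim.

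The step that needs the most care is verifying that the coordinate-ring restriction map $\bC[\bX]\to\rH^0(\wt Z;\cO_{\wt Z})$ really decomposes, piece by piece, as $\rH^0$ applied to Schur functors of the tautological surjections $V\otimes\cO\surjects\cQ$ and $V^*\otimes\cO\surjects\cR^*$ appearing in \eqref{eqn:schur-surj1}. This reduces to unwinding the description of $\wt Z$ as a subbundle of $\bX\times\Gr(p,V)$ together with the compatibility of Schur functors with the Cauchy identity; the remaining input is exactly the structural results of Section~\ref{sec:CI}.
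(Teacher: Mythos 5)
Your proposal is correct and follows essentially the same route as the paper: specialize Proposition~\ref{prop:complex-CI} with $c_1=c_2=0$ to get the variety of complexes $Z$ and its vector-bundle desingularization $\wt Z$ over $\Gr(p,V)$, use normality of $Z$ to conclude $\rH^0(\wt Z;\cO_{\wt Z})=\bC[Z]$ so that $\bC[\bX]\to\rH^0(\wt Z;\cO_{\wt Z})$ is surjective, then expand both sides by the Cauchy identity and extract the $\GL(E)\times\GL(F)$ isotypic component for $\bS_\alpha E\otimes\bS_\beta F$. Your write-up spells out a few steps the paper leaves implicit (properness and birationality of $\pi$, the resulting $\pi_*\cO_{\wt Z}=\cO_Z$, and the disposal of the vanishing cases), but the key lemma, the decomposition used, and the logical skeleton coincide with the paper's.
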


\begin{proof}
  We consider the proof of Proposition~\ref{prop:complex-CI} with $X$ a point, $c_1=c_2=0$, $a_1=p$, $a_2=b-p$, and $B = V$. Then $\bF = \Gr(p,V)$ and $\eta = (A_1 \otimes \cR^*) \oplus (\cQ \otimes A_2^*)$. The projection map $\pi \colon \Spec_\bF(\Sym(\eta)) \to \hom(A_1, V) \times \hom(V, A_2)$ is birational, and by Proposition~\ref{prop:complex-CI}, the image is a normal variety. Hence $\rH^0(\bF; \Sym(\eta))$ gives the coordinate ring of the image, which implies that the map on sections of the canonical map
  \[
    \Sym( (A_1 \otimes V^*) \oplus (V \otimes A_2^*)) \to \Sym(\eta)
  \]
  is surjective. By the Cauchy identity and semisimplicity, taking the $\bS_\beta(A_1) \otimes \bS_\alpha(A_2^*)$ isotypic component with respect to the action of $\GL(A_1) \times \GL(A_2)$ gives the desired result.
\end{proof}

\begin{remark} \label{rmk:pos-char}
  While our primary focus in this paper uses fields of characteristic 0, we remark that everything in this section can be done over an algebraically closed field of any characteristic, except possibly for the statement about rational singularities.
\end{remark}

\section{Rational Schur functors} \label{sec:rational-schur}

\subsection{Construction}
Let $E,F$ be vector spaces and let $V=V_0|V_1$ be a super vector space. Define
\[
  W = \hom(E,V) \oplus \hom(V,F), \qquad A = \Sym(W^*).
\]
We think of $W$ as a supervariety and $A$ as its coordinate ring.
As before, we can represent a general point of $W$ as a pair of maps $(f,g)$ and the entries of the composition $gf$ form the space $E \otimes F^*$; let $(E \otimes F^*)$ be the ideal in $A$ that this space generates.

\begin{proposition} \label{prop:flat-super}
  Suppose that $\dim V_0 \ge \dim E + \dim F - 1$. Then $(E \otimes F^*)$ is a complete intersection in the sense that it is resolved by the Koszul complex on  $E \otimes F^*$.
\end{proposition}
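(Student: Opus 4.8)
The plan is to pass to the associated graded of $A$ with respect to the ideal $\cJ\subset A$ generated by the odd coordinates, where the problem degenerates to a non-super variety of complexes already handled by Corollary~\ref{cor:complex-CI} (and, in the boundary case, Remark~\ref{rmk:complex-CI}). Concretely, set $W_0=\hom(E,V_0)\oplus\hom(V_0,F)$ and $W_1=\hom(E,V_1)\oplus\hom(V_1,F)$, so that $W=W_0|W_1$, $A=\Sym(W_0^*)\otimes\bigwedge(W_1^*)$, and $\cJ=A\cdot W_1^*$ satisfies $\cJ^i=\Sym(W_0^*)\otimes\bigwedge^{\ge i}(W_1^*)$. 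Thus $A_\bos:=A/\cJ=\Sym(W_0^*)$ is a polynomial ring, $\gr_\cJ A\cong A_\bos\otimes\bigwedge(W_1^*)$ is free as an $A_\bos$-module, and $\cJ$ is nilpotent, so the $\cJ$-adic filtration on $A$ is finite. Writing $f=f_0\oplus f_1$ and $g=g_0\oplus g_1$ for the block decompositions along $V=V_0|V_1$, the generators of $I$ are the entries of $gf=g_0f_0+g_1f_1$: the entry $r_{ij}$ equals $(g_0f_0)_{ij}+(g_1f_1)_{ij}$, where $(g_0f_0)_{ij}$ is quadratic in the even coordinates (so it lies in $\cJ^0\setminus\cJ^1$) and $(g_1f_1)_{ij}\in\cJ^2$. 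Hence the image of $r_{ij}$ in $\gr^0 A=A_\bos$ is $(g_0f_0)_{ij}$, and these $\dim E\cdot\dim F$ classes are linearly independent in $A_\bos$.

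The key non-super input is that the elements $(g_0f_0)_{ij}$ form a regular sequence in $A_\bos$; equivalently, the variety of complexes $Z_0=\{\,g_0f_0=0\,\}\subseteq\hom(E,V_0)\oplus\hom(V_0,F)$ has codimension exactly $\dim E\cdot\dim F$. For $\dim V_0\ge\dim E+\dim F$ this is Corollary~\ref{cor:complex-CI} applied with $X$ a point, $\cA_1=E$, $\cA_2=F$, $\cB=V_0$; for $\dim V_0=\dim E+\dim F-1$ it is exactly the situation of Remark~\ref{rmk:complex-CI}, where $Z_0$ becomes reducible but is still a complete intersection. (When $\dim V_0<\dim E+\dim F-1$ the ranks force $\rank g_0 f_0$ and hence $Z_0$ to have too small codimension, which is consistent with the hypothesis being sharp.)

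Now filter the Koszul complex $K_\bullet=A\otimes\bigwedge^\bullet(E\otimes F^*)$ on the generators $r_{ij}$ by assigning filtration-weight $0$ to each degree-one generator and giving $K_p$ the filtration induced by $\cJ^\bullet\subset A$; the differential is filtration-preserving since it sends a degree-one generator to some $r_{ij}\in\cJ^0$. Because the $\cJ$-leading term of $r_{ij}$ is $(g_0f_0)_{ij}\in A_\bos$, the associated graded complex is the Koszul complex on the $(g_0f_0)_{ij}$ over $\gr_\cJ A$, namely $\gr_\cJ A\otimes_{A_\bos}K_\bullet\big((g_0f_0)_{ij};A_\bos\big)$. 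Since $(g_0f_0)_{ij}$ is a regular sequence, $K_\bullet\big((g_0f_0)_{ij};A_\bos\big)$ is acyclic in positive degrees, and freeness of $\gr_\cJ A$ over $A_\bos$ shows $\gr K_\bullet$ is too. Finally, because the $\cJ$-adic filtration (hence the induced filtration on $K_\bullet$) is finite, the spectral sequence of the filtered complex converges and degenerates at $E_1$ above degree $0$, so acyclicity of $\gr K_\bullet$ in positive degrees forces acyclicity of $K_\bullet$ in positive degrees; thus $K_\bullet$ resolves $A/I$.

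I expect the only step requiring genuine content rather than bookkeeping to be the regular-sequence claim in the boundary case $\dim V_0=\dim E+\dim F-1$, which is precisely what Remark~\ref{rmk:complex-CI} provides; the rest is the routine ``degenerate to the associated graded'' mechanism, where the one point to check carefully is that the $\cJ$-adic leading term of each generator $r_{ij}$ is the quadratic form $(g_0f_0)_{ij}$ and that $\cJ$ is nilpotent, so that $E_1$-degeneration of the filtered complex is enough to conclude.
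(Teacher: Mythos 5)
Your proof is correct, and it rests on exactly the same geometric input as the paper's --- that the quadratic leading terms $(g_0f_0)_{ij}$ cut out the (non-super) variety of complexes, which is a complete intersection under the stated inequality by Corollary~\ref{cor:complex-CI} and, in the boundary case $\dim V_0 = \dim E + \dim F - 1$, by Remark~\ref{rmk:complex-CI} --- but the transfer mechanism is different. The paper realizes the same degeneration as a $1$-parameter family scaling the odd coordinates by $t$, and proves that regularity persists from $t=0$ to $t=1$ by an elementary rank argument applied one generator at a time: for a homogeneous element $f$ of a graded supercommutative algebra $B$, if the specialization $f_0$ at $t=0$ is a nonzerodivisor then so is $f$, because injectivity of $f_0\colon B_d\to B_{d+\deg f}$ means the rank is already maximal and ranks can only stay maximal under the deformation. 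You instead package the whole Koszul complex at once and run the (finite) $\cJ$-adic filtration spectral sequence; the associated graded differential is multiplication by $(g_0f_0)_{ij}$ because the other summand $(g_1f_1)_{ij}$ lies in $\cJ^2$, which is exactly the observation that makes both arguments go. The two are essentially the filtration/spectral-sequence and the semicontinuity implementations of the same degeneration; yours is more systematic and handles all generators simultaneously, the paper's is more hands-on and avoids setting up the spectral sequence. One small phrasing point: rather than saying the spectral sequence ``degenerates at $E_1$ above degree $0$,'' it is cleaner to say that $E_1$ (hence $E_\infty$, as a subquotient) vanishes in positive homological degree, and convergence of the bounded filtration then kills $H_{>0}(K_\bullet)$; degeneration in the usual sense (vanishing of $d_r$ for $r\ge 1$) is a stronger claim than you need or prove here.
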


\begin{proof}
  We consider a 1-parameter family where all odd variables in $A$ are scaled by $t$. Let $I'$ be the ideal generated by $E \otimes F^*$ at $t=0$. Then we are in the situations of Corollary~\ref{cor:complex-CI} and Remark~\ref{rmk:complex-CI} tensored with an exterior algebra on $\hom(E,V_1)^* \oplus \hom(V_1,F)^*$. The result follows by repeated applications of the following claim.

  Suppose that $B$ is a graded supercommutative algebra with finite-dimensional graded components, and consider the 1-parameter family where all odd generators are scaled by $t$. Let $f \in B$ be homogeneous and let $f_0$ be the result of substituting $t=0$. If $f_0$ is a nonzerodivisor, we claim that $f$ is also a nonzerodivisor. This follows by considering the rank of the multiplication map $f_t \colon B_d \to B_{d + \deg(f)}$ for all $d$: $\rank f_t \ge \rank f_0$ for all $t$, and $f_0$ being a nonzerodivisor is equivalent to multiplication by $f_0$ being injective, i.e., $\rank f_0$ being maximal for all degrees $d$.
\end{proof}

Consider the natural action of $\GL(E) \times \GL(F)$ on $A/(E \otimes F^*)$. This commutes with the action of $\GL(V)$. We denote the $\bS_\mu(E) \otimes \bS_\lambda(F^*)$ multiplicity space by $\bS_{[\lambda;\mu]}(V)$, and call the construction $V \mapsto \bS_{[\lambda;\mu]}(V)$ a \defi{rational Schur functor}. It follows from the Cauchy identity that when $\mu=0$, this is the usual Schur functor $\bS_{[\lambda;0]}(V) = \bS_\lambda(V)$, and when $\lambda=0$, we get the dual space $\bS_{[0;\mu]}(V) = (\bS_\mu(V))^*$. The Koszul complex gives an exact sequence
  \begin{align*}
  \cdots \to \bigwedge^i(E \otimes F^*) \otimes A \to \cdots \to (E \otimes F^*) \otimes A \to A \to A/(E \otimes F^*) \to 0
  \end{align*}
  and taking the $\bS_\mu(E) \otimes \bS_\lambda(F^*)$ multiplicity space gives an exact sequence
\begin{align} \label{eqn:rat-schur-littlewood}
    \cdots \to \bigoplus_{|\nu|=i} \bS_{\lambda/\nu}(V) \otimes \bS_{\mu/\nu^T}(V^*) \to \cdots \to \bS_{\lambda/1} V \otimes \bS_{\mu/1} (V^*) \to \bS_\lambda V \otimes \bS_\mu(V^*) \to \bS_{[\lambda;\mu]}(V) \to 0
\end{align}

\begin{proposition} \label{prop:rational-schur-coord}
    As representations of $\GL(V_0) \times \GL(V_1)$, we have:
  \begin{align*}
    A / (E \otimes F^*) &\cong  \bigoplus_{\lambda,\mu} \bS_\mu E \otimes \bS_{[\lambda;\mu]}(V) \otimes \bS_\lambda(F^*)  \\
    &\cong \left( \bigoplus_{\lambda,\mu} \bS_\mu E \otimes \bS_{[\lambda;\mu]}(V_0) \otimes \bS_\lambda(F^*) \right) \otimes \bigwedge^\bullet(E \otimes V_1^*) \otimes \bigwedge^\bullet(V_1 \otimes F^*).
  \end{align*}
  and
  \begin{align*}
    \bS_{[\lambda;\mu]} V  &= \bigoplus_{\alpha,\beta,\gamma,\delta} (\bS_{[\alpha;\beta]}(V_0) \otimes \bS_{\gamma}(V_1^*) \otimes \bS_\delta(V_1))^{\oplus c^\mu_{\beta, \gamma^T} c^\lambda_{\alpha, \delta^T}}
  \end{align*}
  where $c$ denotes the Littlewood--Richardson coefficient.
\end{proposition}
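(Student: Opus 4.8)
The plan is to deduce everything from the super-decomposition of the coordinate ring $A$, one flat degeneration that removes the odd variables, and the Cauchy and Littlewood--Richardson identities. The opening isomorphism is the definition unpacked: the $\GL(E)$- and $\GL(F)$-actions on the polynomial ring $A$ are polynomial, and the ideal $(E\otimes F^*)$ is stable under the commuting actions of $\GL(E)$, $\GL(F)$, and $\GL(V_0)\times\GL(V_1)$, so $A/(E\otimes F^*)$ decomposes $\GL(V_0)\times\GL(V_1)$-equivariantly into isotypic pieces $\bS_\mu E\otimes\bS_{[\lambda;\mu]}(V)\otimes\bS_\lambda(F^*)$, where $\bS_{[\lambda;\mu]}(V)$ is by definition the multiplicity space of $\bS_\mu E\otimes\bS_\lambda(F^*)$. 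So the content lies in the second line and in the formula for $\bS_{[\lambda;\mu]}(V)$.

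For the second line I would split off the odd variables. Writing $W^*=(E\otimes V_0^*)\oplus(V_0\otimes F^*)\oplus(E\otimes V_1^*)\oplus(V_1\otimes F^*)$ with the last two summands odd gives
\[
  A\cong\Sym\bigl((E\otimes V_0^*)\oplus(V_0\otimes F^*)\bigr)\otimes\bigwedge^\bullet(E\otimes V_1^*)\otimes\bigwedge^\bullet(V_1\otimes F^*),
\]
and the tautological composition splits as $gf=g_0f_0+g_1f_1$, a sum of a quadric in the even variables and one in the odd variables. Running the same $\GL(E)\times\GL(F)\times\GL(V_0)\times\GL(V_1)$-equivariant one-parameter degeneration used in the preceding proposition (scale all odd variables by $t$), the special fiber replaces the ideal $I=(E\otimes F^*)$ by the ideal $I'$ generated by the entries of $g_0f_0$ alone; as identified in that proof, $A/I'$ is the coordinate ring of the variety of complexes $E\to V_0\to F$ tensored with $\bigwedge^\bullet(E\otimes V_1^*)\otimes\bigwedge^\bullet(V_1\otimes F^*)$, and --- by Corollary~\ref{cor:complex-CI} or Remark~\ref{rmk:complex-CI}, which is where $\dim V_0\ge\dim E+\dim F-1$ enters --- it is a complete intersection of codimension $(\dim E)(\dim F)$, the same codimension as $I$. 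Hence $A/I$ and $A/I'$ are both resolved by the Koszul complex on these $(\dim E)(\dim F)$ generators, so they have equal characters as graded $\GL(E)\times\GL(F)\times\GL(V_0)\times\GL(V_1)$-modules, and therefore, all groups being linearly reductive, are isomorphic as such. Applying the first line of the proposition (already proven) to the purely even space $V_0$ identifies the coordinate ring of that variety of complexes with $\bigoplus_{\lambda,\mu}\bS_\mu E\otimes\bS_{[\lambda;\mu]}(V_0)\otimes\bS_\lambda(F^*)$, which gives the second line.

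To read off the formula for $\bS_{[\lambda;\mu]}(V)$ I would take the $\bS_\mu E\otimes\bS_\lambda(F^*)$-isotypic component of the equality of the two lines. The left side is $\bS_{[\lambda;\mu]}(V)$; on the right I expand $\bigwedge^\bullet(E\otimes V_1^*)=\bigoplus_\rho\bS_\rho E\otimes\bS_{\rho^T}(V_1^*)$ and $\bigwedge^\bullet(V_1\otimes F^*)=\bigoplus_\sigma\bS_\sigma V_1\otimes\bS_{\sigma^T}(F^*)$ by Cauchy, then collapse the $\GL(E)$-factors $\bS_\beta E\otimes\bS_\rho E$ and the $\GL(F)$-factors $\bS_\alpha(F^*)\otimes\bS_{\sigma^T}(F^*)$ via Littlewood--Richardson, so that the multiplicity of $\bS_\mu E\otimes\bS_\lambda(F^*)$ comes out as $\bigoplus_{\alpha,\beta,\rho,\sigma}\bigl(\bS_{[\alpha;\beta]}(V_0)\otimes\bS_{\rho^T}(V_1^*)\otimes\bS_\sigma(V_1)\bigr)^{\oplus c^\mu_{\beta,\rho}c^\lambda_{\alpha,\sigma^T}}$; reindexing by $\gamma=\rho^T$ and $\delta=\sigma$ puts this in the stated form. (One could instead compute the character of $\bS_{[\lambda;\mu]}(V)$ from the Euler characteristic of~\eqref{eqn:rat-schur-littlewood} together with the super-decomposition of skew Schur functors, but the route above is cleaner.)

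The step I expect to be the main obstacle is the degeneration: one must verify that the initial ideal $I'$ is again a complete intersection of codimension $(\dim E)(\dim F)$, which is precisely what forces $A/I$ and $A/I'$ to have the same equivariant Hilbert series, and this is where the hypothesis $\dim V_0\ge\dim E+\dim F-1$ and the results of Section~\ref{sec:CI} are genuinely needed. Everything after that is routine equivariant bookkeeping with the Cauchy and Littlewood--Richardson identities; the only thing requiring care is keeping the transposes straight.
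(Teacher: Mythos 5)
Your argument is correct and follows essentially the same route as the paper's: the first isomorphism is the definition, the second comes from the flat degeneration $t\cdot(\text{odd variables})$ used to prove the preceding complete-intersection proposition, and the final formula comes from taking the $\bS_\mu E\otimes\bS_\lambda(F^*)$-isotypic component via Cauchy and Littlewood--Richardson. You spell out the degeneration step in more detail than the paper does (both ideals resolved by Koszul complexes of the same length, hence equal equivariant Hilbert series, hence isomorphic by reductivity), and your transpose bookkeeping at the end reindexes correctly to the stated form.
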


\begin{proof}
  The first isomorphism holds by definition of $\bS_{[\lambda;\mu]}(V)$. The second isomorphism holds by considering the flat family constructed in the proof of Proposition~\ref{prop:flat-super}. The last formula holds by comparing these two isomorphisms and taking the $\bS_\mu E \otimes \bS_\lambda(F^*)$-isotypic component. We compute this for the last expression using the Cauchy identity and the definition of Littlewood--Richardson coefficients:
  \begin{align*}
    & \left( \bigoplus_{\alpha,\beta} \bS_\beta E \otimes \bS_{[\alpha;\beta]}(V_0) \otimes \bS_\alpha(F^*) \right) \otimes \bigwedge^\bullet(E \otimes V_1^*) \otimes \bigwedge^\bullet(V_1 \otimes F^*)\\
    =& \left( \bigoplus_{\alpha,\beta} \bS_\beta E \otimes \bS_{[\alpha;\beta]}(V_0) \otimes \bS_\alpha(F^*) \right) \otimes \left(\bigoplus_\gamma \bS_{\gamma^T}E \otimes \bS_\gamma(V_1^*) \right) \otimes  \left( \bigoplus_\delta \bS_\delta(V_1) \otimes \bS_{\delta^T}(F^*) \right)\\
    =&\bigoplus_{\alpha,\beta,\gamma,\delta} (\bS_\mu(E) \otimes \bS_{[\alpha;\beta]}(V_0) \otimes \bS_{\gamma}(V_1^*) \otimes \bS_\delta(V_1) \otimes \bS_\lambda(F^*) )^{\oplus c^\mu_{\beta, \gamma^T} c^\lambda_{\alpha, \delta^T}}. \qedhere
  \end{align*}
\end{proof}

\begin{remark}
  In the last expression above, the $\GL(V_1)$-representations $\bS_\gamma(V_1^*) \otimes \bS_\delta(V_1)$ are not irreducible in general, and can be rewritten as a direct sum of rational Schur functors on $V_1$. However, this adds a significant complication to the formula, while the version that we have is sufficient for our applications.
\end{remark}

We end with a determinantal formula for the character of $\bS_{[\lambda;\mu]}(V)$; we will use brackets $[-]$ to denote taking the character of a representation. This is probably not new, but  we will use it shortly, and we could not find a clear reference, so we will give a proof.

Pick $p \ge \ell(\lambda)$ and $q \ge \ell(\mu)$. To avoid small indices, let $\ol{h}(n) = [\Sym^n(V^*)]$ and $h(n) = [\Sym^n(V)]$. We first review the classical Jacobi--Trudi identity. This states that for partitions $\alpha \subseteq \beta$ and any $n \ge \ell(\beta)$, we have (we will let $i$ denote row indices, and $j$ denote column indices):
\begin{align} \label{eqn:J-T}
  [\bS_{\alpha/\beta}V] = \det( h (\alpha_j - \beta_i - j + i) )_{i,j=1}^n, \quad   [\bS_{\alpha/\beta}(V^*)] = \det( \ol{h} (\alpha_j - \beta_i - j + i) )_{i,j=1}^n.
\end{align}
For symmetric functions, this identity appears as \cite[Theorem 7.16.1]{EC2} (although we have transposed the matrix for compatibility with a later formula). We have a symmetric monoidal functor from the category of polynomial functors to representations of $\GL(V)$ given by evaluation at $V$ (see for instance, \cite[(6.5.1)]{expos}). On Grothendieck rings, this induces a homomorphism from the ring of symmetric functions to the character ring of $\GL(V)$ which sends $h_k \mapsto [\Sym^k(V)]$ and $s_{\alpha/\beta} \mapsto [\bS_{\alpha/\beta}(V)]$; the second identity is just obtained by swapping $V$ for $V^*$.

Let $M$ be the $(p+q) \times q$ matrix given by
\[
  M_{i,j} = \ol{h}(\mu_{q+1-j} -i+j)
\]
In other words, on the main diagonal, we have the entries $\ol{h}(\mu_q), \dots, \ol{h}(\mu_1)$; when going up along a column, the indices increase by 1 each entry and when going down, they decrease by 1 each entry. We also define a $(p+q) \times p$ matrix $L$ by
\[
  L_{i,j} = h(\lambda_j + i - q - j).
\]
Finally, let $M|N$ denote the square $(p+q) \times (p+q)$ matrix obtained by concatenating $L$ to the right side of $M$. In particular, the diagonal entries of $M|L$ are $\ol{h}(\mu_q),\dots, \ol{h}(\mu_1), h(\lambda_1), \dots, h(\lambda_p)$.

The determinant $\det(M|L)$ is called a \defi{composite supersymmetric Schur polynomial} in \cite[\S 8.4]{comes-wilson}; see loc. cit. also for some small examples. We will use their work on these characters in a later section.

\begin{proposition} \label{prop:rat-schur-det}
  If $\dim V_0 \ge \ell(\lambda)+\ell(\mu)-1$, then
  \[
    [\bS_{[\lambda;\mu]}(V)] = \det(M|L).
  \]
\end{proposition}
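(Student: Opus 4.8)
The plan is to compute $[\bS_{[\lambda;\mu]}(V)]$ as an Euler characteristic using the Koszul resolution \eqref{eqn:rat-schur-littlewood}, and then to match that alternating sum with $\det(M|L)$ via a Laplace expansion whose blocks are ordinary Jacobi--Trudi determinants.

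First I would make \eqref{eqn:rat-schur-littlewood} exact. Since $\bS_{[\lambda;\mu]}(V)$ is unchanged, when nonzero, as $\dim E$ and $\dim F$ grow, I may take $\dim E = \ell(\mu)$ and $\dim F = \ell(\lambda)$; the hypothesis then becomes $\dim V_0 \ge \dim E + \dim F - 1$, so by the preceding proposition on complete intersections in $A$ the Koszul complex on $E\otimes F^*$ resolves $A/(E\otimes F^*)$ and \eqref{eqn:rat-schur-littlewood} is exact. Taking the alternating sum of characters gives
\[
  [\bS_{[\lambda;\mu]}(V)] \;=\; \sum_{\nu}(-1)^{|\nu|}\,[\bS_{\lambda/\nu}(V)]\cdot[\bS_{\mu/\nu^T}(V^*)],
\]
the sum over all partitions $\nu$, with only those satisfying $\nu\subseteq\lambda$ and $\nu^T\subseteq\mu$ contributing a nonzero term.

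Next I would expand $\det(M|L)$ by the Laplace rule along its first $q$ columns (the block $M$). Given a $q$-subset $T=\{t_1<\dots<t_q\}$ of the row set $\{1,\dots,p+q\}$, with complement $T^c=\{s_1<\dots<s_p\}$, the corresponding term is $(-1)^{\,t_1+\dots+t_q+\binom{q+1}{2}}\det(M_T)\det(L_{T^c})$, where $M_T$, $L_{T^c}$ denote the square submatrices of $M$, $L$ on those rows. Associate to $T$ the partition $\nu=\nu(T)$ in the $q\times p$ box defined by $\nu_{q+1-b}=t_b-b$; the standard bijection between $q$-subsets and such partitions sends $T^c$ to $\nu^T$, and from the definition $t_1+\dots+t_q=|\nu|+\binom{q+1}{2}$, so the Laplace sign is exactly $(-1)^{|\nu|}$. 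Reversing the rows and the columns of $M_T$ (these two reversals have the same sign and so cancel) identifies it with $\big(\ol h(\mu_j-\nu_i-j+i)\big)_{i,j=1}^{q}$, which by \eqref{eqn:J-T} equals $[\bS_{\mu/\nu}(V^*)]$ since $q\ge\ell(\mu)$; when $\nu\not\subseteq\mu$, taking the least $k$ with $\nu_k>\mu_k$ this matrix has a zero submatrix on rows $\{1,\dots,k\}$ and columns $\{k,\dots,q\}$ of total size $q+1$, so the determinant vanishes, matching the convention $\bS_{\mu/\nu}=0$. The matrix $L_{T^c}$ is already in Jacobi--Trudi form and equals $[\bS_{\lambda/\nu^T}(V)]$ since $p\ge\ell(\lambda)$, vanishing unless $\nu^T\subseteq\lambda$. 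Summing over $T$, equivalently over all $\nu$, yields $\det(M|L)=\sum_{\nu}(-1)^{|\nu|}[\bS_{\lambda/\nu^T}(V)]\cdot[\bS_{\mu/\nu}(V^*)]$, which agrees with the formula above after relabeling $\nu\leftrightarrow\nu^T$.

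I expect the only real difficulty to be bookkeeping: getting all three sign sources right at once (the Laplace sign, the row/column reversals in $M_T$, and the identity $\sum_b t_b=|\nu|+\binom{q+1}{2}$), and verifying that the subset--partition correspondence $T\mapsto\nu$, $T^c\mapsto\nu^T$ matches the indexing of the Koszul complex exactly, including the agreement of the out-of-range (non-containment) terms on the two sides. Everything else is a direct application of results already established in the excerpt.
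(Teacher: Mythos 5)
Your proof is correct and takes essentially the same route as the paper: take the alternating sum of characters in the Koszul complex \eqref{eqn:rat-schur-littlewood} (valid by the complete intersection proposition since you may take $\dim E = \ell(\mu)$, $\dim F = \ell(\lambda)$), and Laplace-expand $\det(M|L)$ along the $M$-block, identifying the two blocks as Jacobi--Trudi determinants for $\bS_{\mu/\nu}(V^*)$ and $\bS_{\lambda/\nu^T}(V)$. The paper indexes the $q$-subset by a decreasing sequence and works out $\delta(S)=\gamma(S)^T$ explicitly, whereas you use the increasing sequence and cite the standard subset--partition bijection, but the bookkeeping and the underlying argument are the same.
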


\begin{proof}
  \addtocounter{equation}{-1}
  \begin{subequations}
  Let $S \subseteq \{1,\dots,p+q\}$ be a subset of size $q$ and let $s_1 > \cdots > s_q$ be its members written in decreasing order. We define a few quantities:
  \begin{itemize}
  \item $S^c = \{1,\dots,p+q\} \setminus I$; let $t_1 < \cdots < t_p$ be its members written in increasing order,
  \item $\Delta_S$ is the determinant of the $q\times q$ submatrix of $M$ using the rows indexed by $S$,
  \item $\nabla_S$ is the determinant of the $p \times p$ submatrix of $L$ using the rows indexed by $S^c$,
  \item $\gamma(S) = (s_1 - q, s_2 - q + 1, \dots, s_q - 1)$, which is a partition, and
  \item $\delta(S) = (q+1-t_1, q+2-t_2, \dots, q+p-t_p)$, which is also a partition.
  \end{itemize}

  By Laplace expansion, we have
  \begin{align} \label{eqn:laplace}
    \det(M|L) = \sum_S (-1)^{|\gamma(S)|} \Delta_S \nabla_S
  \end{align}
  where the sum is over all subsets $S \subseteq\{1,\dots,p+q\}$ of size $q$. Now we identify the terms on the right. Using \eqref{eqn:J-T}, $\Delta_S = [\bS_{\mu/\gamma(S)}(V^*)]$ and $\nabla_S = [\bS_{\lambda/\delta(S)}(V)]$.

  Next, we claim that $\delta(S)=\gamma(S)^T$. Note that $\gamma(S)^T$ is the unique partition such that for all $a,b \ge 1$, we have $(\gamma(S)_a \ge b) \iff (\gamma(S)^T_b \ge a)$, so it suffices to show that $\delta(S)$ satisfies these equivalences. So pick $a,b \ge 1$. Then
  \begin{align*}
    \gamma(S)_a \ge b&\iff s_a \ge q+1-a+b\\
                     &\iff |S \cap \{1,\dots,q-a+b\}| \le q-a\\
                     &\iff |S^c \cap \{1,\dots,q-a+b\}| \ge b\\
                     &\iff t_b \le q-a+b\\
    &\iff \delta(S)_b \ge a,
  \end{align*}
  which proves the claim. The function $S \mapsto \gamma(S)$ is a bijection between the set of $q$-element subsets of $\{1,\dots,p+q\}$ and the set of partitions $\gamma$ such that $\gamma \subseteq q \times p$, i.e., that satisfy $\ell(\gamma) \le q$ and $\gamma_1 \le p$. Hence \eqref{eqn:laplace} can be written as
  \[
    \det(M|N) = \sum_{\gamma \subseteq q \times p} (-1)^{|\gamma|} [\bS_{\mu/\gamma}(V^*)] [\bS_{\lambda/\gamma^T}(V)].
  \]
  The left side is the Euler characteristic of \eqref{eqn:rat-schur-littlewood} (without the rightmost term) since if $\gamma \not\subseteq q \times p$, then either $\bS_{\mu/\gamma}(V^*)=0$ or $\bS_{\lambda/\gamma^T}(V)=0$, and so it simplifies to $[\bS_{[\lambda;\mu]}(V)]$, as desired.
\end{subequations}
\end{proof}

\subsection{Irreducibility} \label{ss:irred}

Our next goal is to present some cases in which $\bS_{[\lambda;\mu]}(V)$ is an irreducible representation of $\GL(V)$ (or equivalently of its Lie superalgebra $\fgl(V)$).

First we need to discuss some generalities on highest weights. Unlike the classical case, there are multiple conjugacy classes of Borel subalgebras in $\fgl(V)$. To pick one, let $v_1,\dots,v_{m+n}$ be an ordered basis for $V$ such that $v_1,\dots,v_m \in V_0$ and $v_{m+1},\dots,v_{m+n} \in V_1$. We consider the Borel subalgebra $\fb$ of upper-triangular matrices and the Cartan subalgebra $\fh$ of diagonal matrices with respect to this ordered basis.

Let ${\rm diag}(x_1,\dots,x_{m+n})$ denote the diagonal matrix with entries $x_1,\dots,x_{m+n}$. The sequence $(a_1,\dots,a_m|a_{m+1},\dots,a_{m+n})$ denotes the weight $\fh \to \bC$ given by ${\rm diag}(x_1,\dots,x_{m+n}) \mapsto \sum_i a_i x_i$.

Next, let's fix some notation we will use throughout this section:
\[
  r = \ell(\lambda), \qquad s = \ell(\mu), \qquad t = \max\{i \mid \mu_i > n\} = \mu_{n+1}^T.
\]
With these choices, the highest weight of $\bS_\lambda(V)$ is
\[
  \bw(\lambda) = (\lambda_1,\dots,\lambda_r,0,\dots,0|0,\dots,0).
\]
To describe the highest weight of the dual $\bS_\mu(V^*)$, let $\beta = (\beta_1,\dots,\beta_t)$ be the result of removing the first $n$ columns from the Young diagram of $\mu$, i.e., $\beta_i = \mu_i - n$ for $i=1,\dots,t$. Then the highest weight of $\bS_\mu(V^*)$ is
\[
  \bw(\mu^*) = (0,\dots,0, -\beta_t,\dots,-\beta_1 | {-\mu_n^T}, \dots, -\mu^T_1).
\]
In particular, the tensor product of the highest weight vectors in $\bS_\lambda(V)$ and $\bS_\mu(V^*)$ gives a highest weight vector in $\bS_\lambda(V) \otimes \bS_\mu(V^*)$ of weight
\[
  \bw(\lambda;\mu) := \bw(\lambda) + \bw(\mu^*).
\]

  \begin{theorem} \label{thm:schur-irred}
    If $\dim V_0 - \dim V_1 \ge \ell(\lambda) + \ell(\mu)$, then $\bS_{[\lambda;\mu]}(V)$ is an irreducible representation of either $\GL(V)$ or $\fgl(V)$.
  \end{theorem}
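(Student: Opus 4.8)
The plan is to show that the formal character of $\bS_{[\lambda;\mu]}(V)$ equals the character of the irreducible $\fgl(V)$-module $L(\bw(\lambda;\mu))$ of highest weight $\bw(\lambda;\mu)$ (with respect to the standard Borel). Irreducibility follows at once, since a finite-dimensional module whose formal character is an irreducible character has that irreducible as its only composition factor; and rational irreducibility over $\GL(V)$ follows because $\bS_{[\lambda;\mu]}(V)$ is a rational module. Note that in this approach the non-semisimplicity of $\mathrm{Rep}\,\GL(V)$ is never confronted directly: the whole argument takes place at the level of formal characters.

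The first step is to pin down the ``top'' of $\bS_{[\lambda;\mu]}(V)$. By \eqref{eqn:rat-schur-littlewood} it is a quotient of $\bS_\lambda V\otimes\bS_\mu(V^*)$, and both factors are highest weight modules, so every weight of $\bS_{[\lambda;\mu]}(V)$ is $\le\bw:=\bw(\lambda;\mu)$ in the order where $\sigma\le\sigma'$ means $\sigma'-\sigma$ is a sum of positive roots. Using $\bS_\lambda(V_0|V_1)=\bigoplus_{\nu\subseteq\lambda}\bS_\nu(V_0)\otimes\bS_{\lambda^T/\nu^T}(V_1)$, the weights of $\bS_\lambda V$ all have non-negative entries of total sum $|\lambda|$, and dually the weights of $\bS_\mu(V^*)$ all have non-positive entries of total sum $-|\mu|$; comparing the sum of the first $\ell(\lambda)$ coordinates on the two sides of a decomposition $\bw=\sigma+\tau$ (with $\sigma$ a weight of $\bS_\lambda V$ and $\tau$ a weight of $\bS_\mu(V^*)$) forces $\sigma=\bw(\lambda)$ and $\tau=\bw(\mu^*)$, whence the $\bw$-weight space of $\bS_\lambda V\otimes\bS_\mu(V^*)$ is one-dimensional. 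Each constituent $\bS_{\lambda^-}V\otimes\bS_{\mu^-}(V^*)$ of the next term of \eqref{eqn:rat-schur-littlewood} (the single-box removals) has highest weight of the form $\bw-e_k+e_l$ with $k\le\ell(\lambda)$ and $l>k$, the inequality $l>k$ being exactly what $\ell(\lambda)+\ell(\mu)\le\dim V_0$ (a consequence of the hypothesis) guarantees; so $\bw$ is not a weight of the image of the Koszul differential, and $\dim\bS_{[\lambda;\mu]}(V)_\bw=1$. It follows that $L(\bw)$ is a composition factor of $\bS_{[\lambda;\mu]}(V)$ of multiplicity one and every other composition factor is some $L(\sigma)$ with $\sigma<\bw$.

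It remains to show $[\bS_{[\lambda;\mu]}(V)]=[L(\bw)]$, i.e.\ that the lower factors do not in fact occur. By Proposition~\ref{prop:rat-schur-det} this character is the composite supersymmetric Schur polynomial $\det(M|L)$, so the theorem reduces to the assertion that, in the stable range $\ell(\lambda)+\ell(\mu)\le\dim V_0-\dim V_1$, the polynomial $\det(M|L)$ is the character of $L(\bw(\lambda;\mu))$. This is the crux, and the step where the full strength of the hypothesis enters: outside this range $\det(M|L)$ acquires correction terms and $\bS_{[\lambda;\mu]}(V)$ really is reducible (cf.\ \S\ref{sec:examples}). One route is to quote the analysis of composite supersymmetric Schur polynomials in \cite{comes-wilson}. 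A more self-contained route is to verify that $\bS_{[\lambda;\mu]}(V)$ is generated by its (one-dimensional) $\bw$-weight space — so that it is a quotient of the Kac module $K(\bw)$ — and then to compare its restriction to $\fgl(V_0)\oplus\fgl(V_1)$, computed in Proposition~\ref{prop:rational-schur-coord}, against the restriction of $L(\bw(\lambda;\mu))$ (accessible through $K(\bw)$): the inequality forces the two restrictions to agree, so that $\bS_{[\lambda;\mu]}(V)$ and $L(\bw)$ have equal dimension and the surjection between them is an isomorphism. Either way, the character identity of this last step is the main obstacle, and everything preceding it is formal.
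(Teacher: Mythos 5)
Your overall strategy is sound: if one can show that the formal character $[\bS_{[\lambda;\mu]}(V)]$ equals the irreducible character $[L(\bw(\lambda;\mu))]$, then irreducibility follows formally, non-semisimplicity notwithstanding. Your preliminary steps are also correct: the computation that the $\bw$-weight space of $\bS_{[\lambda;\mu]}(V)$ is one-dimensional (comparing partial coordinate sums in a decomposition $\bw=\sigma+\tau$, and noting that the first syzygies in \eqref{eqn:rat-schur-littlewood} have highest weights $\bw-e_k+e_l$ with $l>k$) is fine, and it correctly reduces the problem to showing that $\det(M|L)$ is an irreducible character in the stated range. You are right that this reduction is formal; the difficulty is entirely in the last step, which you explicitly flag as ``the main obstacle.'' The problem is that neither of the two routes you sketch actually closes it.

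Route 1 (``quote \cite{comes-wilson}'') does not finish the job as stated. What is available from \cite{comes-wilson} is that in the stated range (where one checks the cap diagram of $(\lambda,\mu)$ has no caps) the composite supersymmetric Schur polynomial $\det(M|L)$ is the character of the \emph{indecomposable} module $W(\lambda,\mu)$ constructed in their \S 8.3 --- not, a priori, of the \emph{irreducible} $L(\bw(\lambda;\mu))$. This is precisely the remaining gap, and the paper's proof devotes its second half to closing it: since $W(\lambda,\mu)$ is indecomposable, every composition factor shares its central character; using the linkage criterion of \cite[Theorem 2.30]{cheng-wang}, the explicit form of $\bw(\lambda;\mu)+\rho$, and the dominance-order constraints on Littlewood--Richardson coefficients applied to the $\fgl(V_0)\times\fgl(V_1)$-decomposition of Proposition~\ref{prop:rational-schur-coord}, one shows that the only compatible highest weight is $\bw(\lambda;\mu)$ itself, which occurs with multiplicity one. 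That argument is the genuine content of the theorem and does not appear anywhere in your sketch.

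Route 2 (Kac module comparison) is weaker still and in fact looks circular. First, that $\bS_{[\lambda;\mu]}(V)$ is generated by its $\bw$-weight space is not obvious --- tensor products of highest weight modules need not be cyclic on the top weight vector --- and would require a separate argument. More seriously, the weight $\bw(\lambda;\mu)$ is \emph{atypical}: $(\bw(\lambda;\mu)+\rho,\,e_i-f_j)=0$ for many odd isotropic roots (this orthogonality is exactly what drives the paper's central character argument). Hence the Kac module $K(\bw)$ is a proper thickening of $L(\bw)$, and its restriction to $\fgl(V_0)\oplus\fgl(V_1)$ is not the restriction of $L(\bw)$. Determining $[L(\bw)]$ for atypical $\bw$ is the hard character-formula problem you are trying to bypass, so appealing to it here begs the question. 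In short: the ``formal'' portion of your argument is fine, but the crux step is neither proved nor reducible to a clean citation; the paper's central character argument is what actually fills this gap.
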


  As a first step, we will use the results in \cite{comes-wilson} to deduce that the representation has the same character as an indecomposable representation.

    \begin{lemma}
    If $\dim V_0 - \dim V_1 \ge \ell(\lambda) + \ell(\mu)$, then there exists an indecomposable $\GL(V)$-representation $W(\lambda,\mu)$ with the same character as $\bS_{[\lambda;\mu]}(V)$.
  \end{lemma}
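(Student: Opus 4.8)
The plan is to realize $W(\lambda,\mu)$ as the image of an indecomposable object of Deligne's interpolation category under the natural monoidal functor into $\GL(V)$-representations, and to read off its character from the work of Comes and Wilson \cite{comes-wilson}.

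Set $\delta = \dim V_0 - \dim V_1$. I will use that the Karoubian symmetric monoidal category $\underline{\mathrm{Rep}}(GL_\delta)$ has indecomposable objects $R(\lambda,\mu)$ indexed by pairs of partitions, and that evaluation at $V$ and $V^*$ extends to a monoidal functor $\cF$ from $\underline{\mathrm{Rep}}(GL_\delta)$ to the category of finite-dimensional $\GL(V)$-modules (equivalently, one may work with the mixed tensor spaces $V^{\otimes a} \otimes (V^*)^{\otimes b}$, the walled Brauer algebras $B_{a,b}(\delta)$, and summands cut out by primitive idempotents). Two facts from \cite{comes-wilson} are all that is needed: (i) $\cF$ carries each indecomposable $R(\lambda,\mu)$ either to $0$ or to an indecomposable $\GL(V)$-module; and (ii) there is an explicit combinatorial condition on $\lambda,\mu,m,n$ under which $\cF(R(\lambda,\mu)) \ne 0$, and when this holds the character of $\cF(R(\lambda,\mu))$ equals the composite supersymmetric Schur polynomial attached to $(\lambda,\mu)$, i.e.\ $\det(M|L)$ in the notation preceding Proposition~\ref{prop:rat-schur-det}.

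Granting these, the argument is short: I would verify that the hypothesis $\delta \ge \ell(\lambda) + \ell(\mu)$ forces the nonvanishing condition of (ii) — this should be exactly the range in which no cancellation occurs among the Laplace-expansion terms of $\det(M|L)$, essentially the same inequality already appearing in Proposition~\ref{prop:rat-schur-det} — and then set $W(\lambda,\mu) := \cF(R(\lambda,\mu))$. By (i) it is indecomposable, and by (ii) together with Proposition~\ref{prop:rat-schur-det} it has the same character as $\bS_{[\lambda;\mu]}(V)$, which is what the lemma asserts.

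The step I expect to be the main obstacle is the bookkeeping around (ii): aligning the partition and transpose conventions of \cite{comes-wilson} with ours (which of $\lambda,\mu$ plays which role, and whether one passes to transposes), and checking that their precise nonvanishing criterion is implied by $\delta \ge \ell(\lambda) + \ell(\mu)$. Beyond that, no new representation theory is required — only the classification of indecomposables in $\underline{\mathrm{Rep}}(GL_\delta)$ and the behavior of $\cF$, both established in \cite{comes-wilson}.
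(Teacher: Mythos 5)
Your high-level plan does match the paper's: take $W(\lambda,\mu)$ to be the image of the indecomposable $R(\lambda,\mu) \in \underline{\mathrm{Rep}}(GL_\delta)$ under the monoidal evaluation functor, and identify its character with $\det(M|L)$ via Proposition~\ref{prop:rat-schur-det}. Your "fact (i)" is used exactly as you say.

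However, your statement of "fact (ii)" is not what Comes--Wilson prove, and the discrepancy is precisely where the content of the lemma lives. Nonvanishing of $\cF(R(\lambda,\mu))$ does \emph{not} imply that its character is the single composite supersymmetric Schur polynomial $s_{(\lambda,\mu)}$. What \cite[Theorem 8.5.2]{comes-wilson} gives in general is
\[
  [\,W(\lambda,\mu)\,] \;=\; \sum_{(\lambda',\mu')} D_{(\lambda,\mu),(\lambda',\mu')}\, s_{(\lambda',\mu')},
\]
where $D$ is a unitriangular $\{0,1\}$-matrix determined by cap diagrams. The character collapses to the single term $s_{(\lambda,\mu)}$ if and only if the cap diagram of $(\lambda,\mu)$ has \emph{no caps}, which is a genuinely stronger condition than nonvanishing. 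Your suggested heuristic — "the range in which no cancellation occurs in the Laplace expansion of $\det(M|L)$" — is also not the right condition: the Laplace expansion in Proposition~\ref{prop:rat-schur-det} has signed cancellation for all $\lambda,\mu$ (it is the Euler characteristic of the Koszul resolution, valid under the weaker hypothesis $\dim V_0 \ge \ell(\lambda)+\ell(\mu)-1$), and that has nothing to do with the cap-diagram combinatorics, which depend on $\delta = m - n$, not on $m$ alone.

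So the step you've deferred as "bookkeeping" is the actual proof. What the paper does there is form the sets $I_\wedge = \{\lambda_i - i + 1\}$ and $I_\vee = \{i - \delta - \mu_i\}$ from \cite[\S 6.3]{comes-wilson}, set $L = I_\wedge \setminus I_\vee$ and $M = I_\vee \setminus I_\wedge$, and show directly that $\delta \ge \ell(\lambda)+\ell(\mu)$ forces $\max(L) < \min(M)$; this is exactly the no-caps criterion, which then makes the $D$-row trivial and gives $[W(\lambda,\mu)] = s_{(\lambda,\mu)} = \det(M|L)$. Until you replace "nonvanishing" with "no caps" and carry out that inequality, the argument has a gap.
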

  
  \begin{proof}
    We will use the notation of \cite[\S 6.3]{comes-wilson} so $\delta = \dim V_0 - \dim V_1$ (but note that loc. cit. uses $\lambda$ to denote a pair of partitions). In particular, treat $\lambda$ and $\mu$ as infinite sequences with $\lambda_i=0$ for $i>\ell(\lambda)$, and similarly for $\mu$, and define
    \begin{align*}
      I_\wedge &= \{\lambda_i - i + 1 \mid i=1,2,3,\dots\},\\
      I_\vee &= \{ i - \delta - \mu_i \mid i=1,2,3,\dots\}.
    \end{align*}
    Let $L = I_\wedge \setminus I_\vee$ and $M = I_\vee \setminus I_\wedge$. We claim that $\max(L) < \min(M)$.
    First, 
    \[
      (-\infty,-r] := \{-r,-r-1,-r-2,\dots\} \subset I_\wedge,
    \]
    and
    \[
      [s-\delta+1,\infty) := \{s-\delta+1, s-\delta+2, \dots\} \subset I_\vee.
    \]

    Note that
    \[
      \{\lambda_i - i + 1 \mid i=1,\dots,r\} \subset [s-\delta+1, \infty)
    \]
    since the smallest member of the first set is $\lambda_r - r + 1$, and $\lambda_r - r + 1 \ge s -\delta + 1$ by our assumption. Similarly, we have
    \[
      \{i-\delta-\mu_i \mid i=1,\dots,s \} \subset (-\infty, -r].
    \]
    In particular, we conclude that
    \[
      L = (-\infty,-r] \setminus I_\vee, \qquad M = [s-\delta+1,\infty) \setminus I_\wedge.
    \]
Set $d = \delta-r-s$, so that $d \ge 0$ by our assumption. Since the $d$ smallest elements of $[s-\delta+1,\infty)$ are the same as the $d$ largest elements of $(-\infty,-r]$, we get $\min(M) \ge -r+1$ and $\max(L) \le -r - d$, which proves the claim.

    Again in the notation of \cite[\S 6.3]{comes-wilson}, this claim implies that the cap diagram of $(\lambda,\mu)$ has no caps, and hence $D_{(\lambda,\mu),(\lambda',\mu')} = 1$ if $\lambda=\lambda'$ and $\mu=\mu'$, and is 0 otherwise (combining \cite[Corollary 6.4.2]{comes-wilson} with the definition of the numbers $D'$ in terms of cap diagrams).
    Now let $W(\lambda,\mu)$ be the indecomposable $\GL(V)$-representation constructed in \cite[\S 8.3]{comes-wilson}. By \cite[Theorem 8.5.2]{comes-wilson}, the above calculation shows that the character of $W(\lambda,\mu)$ is given by the composite supersymmetric Schur polynomial $s_{(\lambda,\mu)}$. By Proposition~\ref{prop:rat-schur-det}, this agrees with the character of $\bS_{[\lambda;\mu]}(V)$.
  \end{proof}

  To finish the proof, we will show that $W(\lambda,\mu)$ is irreducible; by linear independence of irreducible characters, this will prove that $W(\lambda,\mu) \cong \bS_{[\lambda;\mu]}(V)$. To do this, we will use \cite[Theorem 2.30]{cheng-wang} which classifies which weights have the same central character. Since irreducible representations with different central characters cannot have extensions, to prove that $W(\lambda,\mu)$ is irreducible, it will suffice to show that any potential subrepresentation of $W(\lambda,\mu)$ must have a different central character from the irreducible representation with highest weight $\bw(\lambda;\mu)$.

  Now we explain its statement. Weights are elements in the vector space $\fh^* \cong \bC^{m+n}$; write the standard basis as $e_1,\dots,e_m,f_1,\dots,f_n$. This is equipped with the symmetric bilinear form $(,)$ induced by supertrace given by
    \[
      (e_i,e_j) = \delta_{i,j}, \qquad (e_i,f_j) = 0, \qquad (f_i, f_j) = -\delta_{i,j}.
    \]
    Odd isotropic roots are of the form $e_i - f_j$. Define
    \[
      \rho = (m,m-1,\dots,1|-1,-2,\dots,-n).
    \]
    Given dominant weights $\nu, \eta$ of $\fgl(V_0)$ and $\fgl(V_1)$, respectively, the irreducible representation with highest weight $(\nu|\eta)$ has the same central character as $\bw(\lambda;\mu)$ if and only if there exist mutually orthogonal odd roots $\alpha_1,\dots,\alpha_p$ such that $(\bw(\lambda;\mu)+\rho, \alpha_i)=0$ and
    \begin{align} \label{eqn:linked}
      (\nu|\eta) + \rho = \sigma( \bw(\lambda;\mu) + \rho + \sum_{i=1}^p c_i \alpha_i)
    \end{align}
    for some complex numbers $c_1,\dots,c_p$ and some permutation $\sigma \in \fS_m \times \fS_n$.

    We note that if $e_i - f_j$ and $e_{i'} - f_{j'}$ are orthogonal (and unequal), then $i \ne i'$ and $j \ne j'$. Furthermore, the condition $(( \alpha|\beta), e_i-f_j)=0$ simply means that $\alpha_i = - \beta_j$.

In the proof below, we will use some properties about Littlewood--Richardson coefficients and dominance order. The relevant definitions and references can be found in \S\ref{ss:linear-algebra}.
    
    \begin{proof}
      We claim that if $\bS_{[\nu^+;\nu^-]}(V_0) \otimes \bS_{[\eta^+;\eta^-]}(V_1)$ appears in $W(\lambda,\mu)$ and \eqref{eqn:linked} holds, then $\nu^+=\lambda$, $\nu^-=\beta$, and $\eta^+=\emptyset$, and $\eta^- = (\mu_1^T,\dots,\mu_n^T)$. Write $(\nu|\eta)$ for the highest weight of this representation.
      First, we have
      \[
        \bw(\lambda;\mu) + \rho = (\lambda_1 + m, \dots, \lambda_r + m + 1 - r, m - r, \dots, t+1, t-\beta_t, \dots, 1-\beta_1 | -\mu_n^T-1,\dots, -\mu_1^T - n).
      \]
      Note that both sides are strictly decreasing sequences and that $\mu_n^T+1 \ge t+1$ and $\mu_1^T + n \le m-r$. In particular, if $e_i - f_j$ is orthogonal to $\bw(\lambda;\mu)+\rho$, then $r+1 \le i \le m-t$. Suppose that $(\nu|\eta)$ satisfies \eqref{eqn:linked}. For short, define
      \[
        \bu := \bw(\lambda;\mu) + \rho + \sum_i c_i \alpha_i.
      \]
      First, since all of our weights are integral, and the $\alpha_i$ have no overlapping nonzero components, the $c_i$ are all integers.
      
      The effect of applying the permutation $\sigma$ is to sort the vector $\bu$. Considering the first $m-t$ entries, since adding the $\alpha_i$ only affects the entries after $\lambda$ and before $\beta$, adding any positive, respectively negative, multiples of orthogonal roots $e_i-f_j$ will result in $|\nu^+|>|\lambda|$, respectively $|\nu^-|>|\beta|$. But if $\bS_{[\nu^+;\nu^-]}(V_0) \otimes \bS_{[\eta^+;\eta^-]}(V_1)$ appears in $W(\lambda,\mu)$, then $\nu^+ \subseteq \lambda$ using the formula in Proposition~\ref{prop:rational-schur-coord} (since the Littlewood--Richardson coefficient $c^\lambda_{\nu^+, \delta^T}$ must be nonzero for some partition $\delta$) and hence we must have $\nu^+=\lambda$, which means that $c_i \le 0$ for all $i$. The same formula also implies that $\eta^+=\emptyset$ and $c^\mu_{\nu^-,(\eta^-)^T} \ne 0$.
      
      Suppose that $\sum_i c_i \alpha_i \ne 0$ in \eqref{eqn:linked}. Let $J$ be maximal such that a root of the form $e_I - f_J$ appears in the sum and write $j = n+1-J$. Since $e_I-f_J$ is orthogonal to $\bw(\lambda;\mu)+\rho$, we have $I=m + 1 - \mu_j^T - J$. As explained above, we have $c_i \le 0$ for all $i$.

The last $j-1$ entries of the vector $\sum_i c_i \alpha_i$ are 0, so when we sort the last $m$ entries of $\bu$ the last $j-1$ entries are unaffected; we conclude that
$(\eta^-)^T_i =  \mu_i$ for all $i > \mu_j^T$. On the other hand, since $\sum_i c_i \alpha_i$ is negative in position $e_I$, when sorting $\bu$, the entry in the $I$th position either stays fixed or moves to the right and is replaced by something that was originally to the right of it. In either case, we conclude that $\nu^-_{m+1-I} > 0 = \beta_{m+1-I}$. By maximality of $J$, all of the entries to the left of position $e_I$ in $\sum_i c_i \alpha_i$ are 0 and so are unaffected by the sorting process. Putting these together gives the inequality
      \[
        \sum_{i \ge \mu_j^T + J} (\eta^-)^T_i + \nu^-_i > \sum_{i \ge \mu_j^T + J} \mu_i.
      \]
      Since $|(\eta^-)^T|+|\nu^-| = |\mu|$, this implies that $(\eta^-)^T + \nu^- \not\ge \mu$ (dominance order), which directly contradicts $c^\mu_{\eta^-, (\nu^-)^T} \ne 0$.  In particular, in the relation \eqref{eqn:linked}, we must have $\sum_i c_i \alpha_i = 0$; but then $\bu$ is already sorted, so we conclude that $\eta^- = (\mu_1^T,\dots,\mu_n^T)$ and $\nu^-=\beta$, which proves our initial claim.

Finally, since $W(\lambda,\mu)$ is indecomposable, each of its composition factors has the same central character. The highest weight for a composition factor must be a highest weight with respect to the $\fgl(V_0) \times \fgl(V_1)$ decomposition. The claim implies that $W(\lambda,\mu)$ only has composition factors of highest weight $\bw(\lambda;\mu)$; since this appears with multiplicity 1, we can conclude that $W(\lambda,\mu)$ is irreducible. As stated above, we can also conclude that $W(\lambda,\mu) \cong \bS_{[\lambda;\mu]}(V)$.
\end{proof}

\subsection{Additional remarks}
 
  \begin{remark}
    The definition of $\bS_{[\lambda;\mu]}(V)$ makes sense more generally; we can define it as the cokernel of the canonical map $\bS_{\lambda/1} V \otimes \bS_{\mu/1}(V^*) \to \bS_\lambda V \otimes \bS_\mu(V^*)$. However, it need not be irreducible in general. For instance, if $\lambda=\mu=(1)$, then $\bS_{[1;1]}(V)$ can be identified with $\fp\gl(V)$. If $\dim V_0=\dim V_1>1$, then supertrace gives a well-defined nonzero equivariant map $\fp\gl(V) \to \bC$, so that its kernel defines a proper subrepresentation of $\fp\gl(V)$.
  \end{remark}

  \begin{remark}
See \cite{DS,DS2} for more general information.

  Let $\fg$ be a finite-dimensional Lie superalgebra, $x \in \fg_1$ such that $[x,x]=0$, and let $M$ be a $\fg$-module. Let $x_M$ denote the operator $m \mapsto x\cdot m$ on $M$. Then $x_M^2=0$, so we can define $M_x = \ker x_M / \im x_M$. Letting $\fg$ act on itself by the adjoint representation, we can also construct $\fg_x$; this is a Lie superalgebra which acts on $M_x$ \cite[Lemma 2.2]{DS2}. The assignment $M \mapsto M_x$ gives a functor
  \[
    \ds_x \colon \fg\text{-mod} \to \fg_x\text{-mod},
  \]
  which is called the \defi{Duflo--Serganova functor}. In fact, $\ds_x$ is a symmetric monoidal functor and commutes with taking duals \cite[Lemma 2.4]{DS2}. If $0 \to M_1 \to M_2\to M_3 \to 0$ is a short exact sequence of $\fg$-modules, then $(M_1)_x \to (M_2)_x \to (M_3)_x$ is exact, and $(M_1)_x \to (M_2)_x$ is injective if and only if $(M_2)_x \to (M_3)_x$ is surjective \cite[Lemma 2.7]{DS2}.

  If $\fg = \fgl(m|n)$, then $\fg_x \cong \fgl(m-r|n-r)$ for some $r \ge 0$ which we will denote by $r = \rank (x)$.  We claim that if $\ell(\lambda)+\ell(\mu) -1 \le m-r$, then $\ds_x(\bS_{[\lambda;\mu]}(\bC^{m|n})) = \bS_{[\lambda;\mu]}(\bC^{m-r|n-r})$ for all $x$ with $\rank(x)=r$. Since $\ds_x$ is symmetric monoidal, it commutes with constructing Schur functors. So the claim follows by using the exactness property mentioned earlier and comparing the Koszul complexes \eqref{eqn:rat-schur-littlewood} for $V = \bC^{m|n}$ and $V = \bC^{m-r|n-r}$.

  It is interesting to compare this to \cite[Theorem 12.14]{DS2} which computes the effect of $\ds_x$ on an irreducible representation using arc diagrams. If we assume that $m-n \ge \ell(\lambda)+\ell(\mu)$, so that $\bS_{[\lambda;\mu]}(\bC^{m|n})$ is irreducible from our results above, then the corresponding arc diagram for the weight $\bw(\lambda;\mu)$ has $n$ arcs, which are totally nested, i.e., we can order the arcs so that the $i$th arc is contained inside the $(i+1)$st arc. Removing the maximal arc gives the weight $\bw(\lambda;\mu)$ (but for $\fgl(m-1|n-1)$) so this gives an alternate way to see the compatibility of this construction with $\ds_x$.
\end{remark}

\section{Main theorem} \label{sec:main-thm}

\subsection{Statement and outline of proof} \label{ss:outline}
For now, we focus on the super Grassmannian $X = \Gr(p|q, V)$. This has a tautological short exact sequence
\[
  0 \to \cR \to V \times X \to \cQ \to 0
\]
where $\cR$ is the tautological subbundle of rank $p|q$ and $\cQ$ is the tautological quotient bundle of rank $m-p|n-q$.

Generalizing~\eqref{eqn:schur-surj1}, for partitions $\alpha$ and $\beta$, we have a canonical surjection
\begin{align} \label{eqn:schur-surj}
    (\bS_\alpha V \otimes \bS_\beta (V^*)) \times X \to \bS_\alpha \cQ \otimes \bS_\beta(\cR^*) \to 0.
  \end{align}
  
\begin{theorem} \label{thm:main-Q}
  Let $\alpha, \beta$ be partitions and assume that one of the following conditions holds:
  \begin{enumerate}
  \item $m-n-\ell(\alpha) \ge p -q \ge \ell(\beta)$, or
    
  \item $n-m -\alpha_1\ge q-p \ge \beta_1$.
  \end{enumerate}
In either case, the map \eqref{eqn:schur-surj} induces a surjection on cohomology and a $\GL(V)$-equivariant isomorphism of graded $\rH^\bullet(X;\cO_X)$-modules
  \[
    \rH^\bullet(X; \bS_\alpha \cQ \otimes \bS_\beta (\cR^*)) \cong \rH^\bullet(X; \cO_X) \otimes \bS_{[\alpha; \beta]}(V).
  \]
  Here $\GL(V)$ acts trivially on the vector space $\rH^\bullet(X;\cO_X)$, and the right side denotes the free $\rH^\bullet(X;\cO_X)$-module with generators $\bS_{[\alpha;\beta]}(V)$.

  Furthermore, $\bS_\alpha \cQ \otimes \bS_\beta (\cR^*)$ is $\cJ$-formal.
\end{theorem}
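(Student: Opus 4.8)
The plan is to reduce the theorem to the associated graded of the $\cJ$-adic filtration and then run the Kempf-collapsing/$\Tor$ argument of \cite{superres}, with the complete intersections of \S\ref{sec:CI} playing the role that determinantal varieties played there. First I would dispose of case~(2): the parity-reversal isomorphism $\Gr(p|q,V)\cong\Gr(q|p,\Pi V)$ identifies the tautological bundles of the two sides up to parity shift, carrying $\bS_\alpha\cQ\otimes\bS_\beta(\cR^*)$ to $\bS_{\alpha^T}\cQ'\otimes\bS_{\beta^T}((\cR')^*)$ and $\bS_{[\alpha;\beta]}(V)$ to $\bS_{[\alpha^T;\beta^T]}(\Pi V)$, and turning hypothesis~(2) into hypothesis~(1); so I may assume $m-n-\ell(\alpha)\ge p-q\ge\ell(\beta)$. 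Setting $\cE=\bS_\alpha\cQ\otimes\bS_\beta(\cR^*)$ and using the $\cJ$-adic spectral sequence $\rE_1^{i,j}=\rH^i(X_\bos;\gr^j\cE)\Rightarrow\rH^i(X;\cE)$, with $\gr\cE$ as recorded at the end of \S\ref{ss:supergeom}, the whole statement reduces to two things: (a) this spectral sequence degenerates at $\rE_1$ ($\cJ$-formality), and (b) there is a $\GL(V_0)\times\GL(V_1)$-equivariant isomorphism $\rH^\bullet(X_\bos;\gr\cE)\cong\rH^\bullet(X;\cO_X)\otimes\bS_{[\alpha;\beta]}(V)$ compatible with the map induced by \eqref{eqn:schur-surj}.

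To establish (a)--(b) I would package all the bundles allowed by~(1) simultaneously. Put $\dim E=(m-n)-(p-q)$ and $\dim F=p-q$; then the Cauchy identity decomposes the sheaf of algebras $\Sym(E\otimes\cQ)\otimes\Sym(\cR^*\otimes F)$ into summands $\bS_{\alpha'}E\otimes\bS_{\alpha'}\cQ\otimes\bS_{\beta'}(\cR^*)\otimes\bS_{\beta'}F$ with $(\alpha',\beta')$ running over exactly the pairs satisfying~(1), equivariantly for $\GL(E)\times\GL(F)\times\GL(V)$. Since $\GL(E)\times\GL(F)$ is reductive in characteristic~$0$, it suffices to compute $\rH^\bullet\big(X_\bos;\gr(\Sym(E\otimes\cQ)\otimes\Sym(\cR^*\otimes F))\big)$ as a multigraded algebra with its $\GL(E)\times\GL(F)\times\GL(V_0)\times\GL(V_1)$-action and then extract the $\bS_\alpha E\otimes\bS_\beta F$-isotypic component. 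Using $\cJ/\cJ^2\cong(\cR_0\otimes\cQ_1^*)\oplus(\cQ_0^*\otimes\cR_1)$ together with the splittings $\Sym(E\otimes\cQ)=\Sym(E\otimes\cQ_0)\otimes\bigwedge^\bullet(E\otimes\cQ_1)$ and $\Sym(\cR^*\otimes F)=\Sym(\cR^*_0\otimes F)\otimes\bigwedge^\bullet(\cR^*_1\otimes F)$, I would realize this sheaf of algebras as the (pushforward to $X_\bos$ of the) structure sheaf of an explicit space $Y$: start from the collapsing space $Y_0\subseteq\fgl(V)_1\times X_\bos$ of \cite{superres} (the zero locus of the linear equations from $\cJ/\cJ^2$), adjoin the fiber directions of the bundle $(E^*\otimes\cQ_0^*)\oplus(\cR_0\otimes F^*)$, and impose the remaining exterior factors $\bigwedge^\bullet(E\otimes\cQ_1)$ and $\bigwedge^\bullet(\cR^*_1\otimes F)$ as Koszul complexes on quadratic equations of variety-of-complexes type. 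The numerical content of hypothesis~(1) is precisely what makes Proposition~\ref{prop:complex-CI} apply in the fibers over $X_\bos$: it forces those quadratic equations to be a regular sequence (so the Koszul complexes genuinely resolve $\cO_Y$) and makes $Y$ a reduced, irreducible local complete intersection with rational singularities. I expect reconciling the variety-of-complexes bounds of Proposition~\ref{prop:complex-CI} with the partition-length bounds — i.e., this entire reduction — to be the main obstacle.

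Given $Y$, I would run the degeneration argument of \cite{superres}. Along the collapsing map $\pi\colon Y\to\fgl(V)_1$, cohomology and base change together with the rational singularities of $Y$ (via \cite[Proposition~4.1]{superres}) give $R^{>0}\pi_*\cO_Y=0$ and exhibit $\pi_*\cO_Y$ as a finite free module over the coordinate ring of a determinantal-type variety, which the splitting- and factorization-ring analysis of \S\ref{ss:fact} identifies explicitly. Then $\rH^\bullet\big(X_\bos;\gr(\Sym(E\otimes\cQ)\otimes\Sym(\cR^*\otimes F))\big)\cong\Tor^{\Sym(\fgl(V)_1^*)}_\bullet(\pi_*\cO_Y,\bC)$, and the explicit equivariant form of this $\Tor$ module — with generators concentrated in the expected multidegrees — both leaves no room for differentials in the spectral sequence, which is (a), and, after restricting to the $\bS_\alpha E\otimes\bS_\beta F$-isotypic component and comparing with Proposition~\ref{prop:rational-schur-coord} applied relatively over $X_\bos$, yields the isomorphism (b); feeding the algebra map underlying \eqref{eqn:schur-surj} through the same computation shows that \eqref{eqn:schur-surj} induces a surjection on cohomology.

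It remains to upgrade the $\GL(V_0)\times\GL(V_1)$-statement to a $\GL(V)$-statement. The cohomology map induced by \eqref{eqn:schur-surj} is $\GL(V)$-equivariant and $\rH^\bullet(X;\cO_X)$-linear; $\GL(V)$ acts trivially on $\rH^\bullet(X;\cO_X)$ by \cite{superres}; and it kills the submodule generated by the kernel of the canonical surjection $\bS_\alpha V\otimes\bS_\beta(V^*)\twoheadrightarrow\bS_{[\alpha;\beta]}(V)$, because that kernel already dies in the bundle $\cE$ (the tautological composition $\cR\hookrightarrow V\twoheadrightarrow\cQ$ being zero — the bundle analogue of Corollary~\ref{cor:rat-schur-surj}). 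So it factors through a $\GL(V)$-equivariant surjection $\rH^\bullet(X;\cO_X)\otimes\bS_{[\alpha;\beta]}(V)\twoheadrightarrow\rH^\bullet(X;\cE)$, which by the dimension equality coming from (a)--(b) is an isomorphism; hypothesis~(1) moreover gives $m-n\ge\ell(\alpha)+\ell(\beta)$, so $\bS_{[\alpha;\beta]}(V)$ is irreducible by Theorem~\ref{thm:schur-irred}, whence in degree $0$ one recovers that $\rH^0(X;\cE)$ is the irreducible representation $\bS_{[\alpha;\beta]}(V)$.
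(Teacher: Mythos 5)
Your proposal is correct and follows essentially the same route as the paper: parity reversal to reduce (2) to (1), the $\cJ$-adic spectral sequence, packaging over auxiliary spaces $E,F$ via the Cauchy identity, constructing a collapsing space $Y$ whose structure sheaf encodes the quadratic complete intersections from Proposition~\ref{prop:complex-CI}, computing $\Tor$ over the factorization ring via cohomology and base change and rational singularities, degenerating the spectral sequence on parity grounds, and upgrading to $\GL(V)$-equivariance via Theorem~\ref{thm:schur-irred} and Corollary~\ref{cor:rat-schur-surj}. The only superficial difference is your fixed choice $\dim E=(m-n)-(p-q)$, $\dim F=p-q$ where the paper eventually specializes to $\dim E=\ell(\alpha)$, $\dim F=\ell(\beta)$; both satisfy the required inequalities and the argument is unaffected.
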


We first show that case (2) of the theorem follows from case (1).

\begin{proof}[Proof that case (1) implies case (2)]
  Let $V[1]$ denote the shifted superspace with $V[1]_0=V_1$ and $V[1]_1=V_0$. Then we have an isomorphism $\Gr(p|q,V) \cong \Gr(q|p,V[1])$ such that $\bS_\alpha (\cQ) \otimes \bS_\beta(\cR^*)$ gets identified with $\bS_{\alpha^T}(\cQ) \otimes \bS_{\beta^T}(\cR^*)$ on $\Gr(q|p,V[1])$. Since $\beta_1 = \ell(\beta^T)$ and $\alpha_1 = \ell(\alpha^T)$, case (1) implies that
  \[
    \rH^\bullet(X; \bS_\alpha \cQ \otimes \bS_\beta (\cR^*)) \cong \rH^\bullet(X; \cO_X) \otimes \bS_{[\alpha^T; \beta^T]}(V[1]).
  \]
  But it follows from the construction that $\bS_{[\alpha^T;\beta^T]}(V[1]) \cong \bS_{[\alpha; \beta]}(V)$.
\end{proof}

{\bf Therefore, we will assume throughout that assumption (1) holds:}
\[
  m - n - \ell(\alpha) \ge p- q \ge \ell(\beta).
\]

By \cite[Theorem 1.2(a)]{superres}, we have an isomorphism of graded algebras
\[
  \rH^\bullet(X;\cO_X) \cong \rH^\bullet_{\rm sing}(\Gr_q(\bC^n); \bC),
\]
where the right side is the singular cohomology ring of $\Gr_q(\bC^n)$, which is concentrated in even degrees. See \S\ref{ss:fact} for some more information.

~

The proof involves several involved calculations which, a priori, are not connected, so we will give a brief overview of the plan before starting. We continue to use the notation from \S\ref{ss:supergeom}.

Using the $\cJ$-adic filtration on the sheaf $\bS_\alpha \cQ \otimes \bS_\beta(\cR^*)$, we get a $\GL(V_0) \times \GL(V_1)$-equivariant spectral sequence
\[
  \rH^k(X_\bos; \bigwedge^\bullet (\cJ/\cJ^2) \otimes \bS_\alpha(\cQ_0|\cQ_1) \otimes \bS_\beta(\cR_0^*|\cR_1^*)) \Longrightarrow \rH^k(X; \bS_\alpha \cQ \otimes \bS_\beta(\cR^*)).
\]
Our main goal is to compute the terms on the left hand side. As we have explained, the cohomology of $\bigwedge^\bullet(\cJ/\cJ^2)$ has a concrete geometric interpretation: these groups compute Tor of an algebra which can be described explicitly, so we can attempt to compute it by studying the corresponding algebraic variety (and this strategy was carried out in \cite{superres}). The presence of the extra terms $\bS_\alpha(\cQ_0|\cQ_1) \otimes \bS_\beta(\cR_0^*|\cR_1^*)$ means that the corresponding cohomology groups compute Tor of some module over the aforementioned algebra.

The geometric meaning is less clear, but we can take advantage of the Cauchy identity. We introduce auxiliary vector spaces $E$ and $F$. Then taking the sum over all partitions $\alpha,\beta$, we have
\begin{align*}
  \bigoplus_{\alpha} \bS_\alpha(\cQ_0|\cQ_1) \otimes \bS_\alpha E  &= \Sym(\cQ_0 \otimes E) \otimes \bigwedge^\bullet(\cQ_1 \otimes E),\\
  \bigoplus_\beta  \bS_\beta F \otimes \bS_\beta(\cR_0^*|\cR_1^*)  &= \Sym(F \otimes \cR_0^*) \otimes \bigwedge^\bullet(F \otimes \cR_1^*).
\end{align*}
Thus, by remembering the $\GL(E) \times \GL(F)$ action, we can package all of the calculations as computing the cohomology of one single sheaf of algebras:
\[
  \bigwedge^\bullet(\cJ/\cJ^2) \otimes \Sym(\cQ_0 \otimes E) \otimes \bigwedge^\bullet(\cQ_1 \otimes E) \otimes \Sym(F \otimes \cR_0^*) \otimes \bigwedge^\bullet(F \otimes \cR_1^*).
\]
While it may be possible to do this directly, we will take an indirect approach.

To incorporate the symmetric algebras, we extend our construction by adding $(\cQ_0 \otimes E) \oplus (F \otimes \cR_0^*)$ as functions, i.e., taking the total space of the dual vector bundle. The exterior algebras are less clear, but they also have a place: they appear as Koszul complexes, and we incorporate them by constructing appropriate complete intersections.

We use this insight to construct a scheme $Y$ over $X_\bos$ together with a map to an affine scheme
\[
  \phi \colon Y \to \Spec(S)
\]
such that the cohomology groups for the above algebra compute $\Tor_\bullet^S(\phi_* \cO_Y, \bC)$. To take advantage of this, we introduce one more algebra $\wt{S}$ such that $\phi$ factors as
\[
  Y \xrightarrow{\pi} \Spec(\wt{S}) \to \Spec(S).
\]
Notably, we will show that $\pi_* \cO_Y$ is a free module over a complete intersection in $\Spec(\wt{S})$, which makes it easy to compute $\Tor_\bullet^{\wt{S}}(\pi_* \cO_Y, \bC)$. Finally, transferring the results to $S$ is simple since $\wt{S}$ is free as an $S$-module.

The conclusion is that we get very concrete calculations of the cohomology groups that we were originally interested in. To organize the material we split the proof as follows:
\begin{enumerate}
\item The first part constructs all of the relevant spaces and algebras.
\item The second part analyzes the properties of the algebras $S$ and $\wt{S}$.
\item The third part proves some properties about $Y$.
\item The fourth and final part combines everything together.
\end{enumerate}

\subsection{Proof part 1: setup and constructions}

Let $E,F$ be vector spaces such that
\begin{align} \label{eqn:ineq}
  p -q \ge \dim F, \qquad m-n \ge \dim E + p - q.
\end{align}

Define
\begin{align*}
  W &= \hom(V_0,V_1) \times \hom(V_1,V_0),\\
  W^\re &= W \times \hom(V_0, E) \times \hom(F,V_0),\\
  \wt{W} &= \{(f,g,h_1,h_2) \in W^\re \mid h_1h_2=0\}.
\end{align*}
The notation $\re$ and tilde are meant to suggest that these spaces are enlargements of $W$, though there is no further formal meaning behind this. For a quick visual aid, $f,g,h_1,h_2$ will always denote linear maps between the following vector spaces:
\[
  f \colon V_0 \to V_1, \quad g \colon V_1 \to V_0, \quad h_1 \colon V_0 \to E, \quad h_2 \colon F \to V_0.
\]

If $U$ is an affine $\bC$-scheme, we write its coordinate ring as $\bC[U]$ an abuse of notation, we identify a coherent sheaf on an affine scheme with its module of sections. Now we define a few coordinate rings:
\begin{align*}
  S &= \bC[W] = \Sym(W^*),\\  
  T &= \bC[\hom(F,V_0) \times \hom(V_0,E)] / (h_1h_2 = 0),\\
  \wt{S} &= \bC[\wt{W}].
\end{align*}
In the second definition, $h_1$ is the universal map $V_0\to E$ and similarly for $h_2$, and the equations $h_1h_2=0$ mean to take the ideal generated by the entries of $h_1h_2$.

We think of $S$ and $T$ as subrings of $\wt{S}$ (they are also quotient rings). We equip $\wt{S}$ with a bigrading via $\deg(W^*)=(1,0)$ and $\deg(F \otimes V_0^*) = \deg(V_0 \otimes E^*) = (0,1)$. Note that we have the following vector space decomposition
\[
  \wt{S} = S \otimes T, \qquad \wt{S}_{(a,b)} = S_a \otimes T_b,
\]
and that $\wt{S}$ is both a free $S$-module and a free $T$-module.

Furthermore, since $m \ge \dim E + \dim F$, it follows from \S\ref{sec:rational-schur} that we have the following decomposition of $T$ as a representation of $\GL(E) \times \GL(V_0) \times \GL(F)$:
\begin{align} \label{eqn:T-decomp}
      T = \bigoplus_{\substack{\lambda, \mu\\ \ell(\lambda) \le \dim E, \ \ell(\mu) \le \dim F}} \bS_\lambda (E^*) \otimes \bS_{[\lambda; \mu]}(V_0) \otimes \bS_\mu F.
\end{align}

Next, we define some spaces over $X_\bos$:
\begin{align*}
  Y'' &= \{(R_0, R_1, f,g, h_1, h_2) \in X_{\bos} \times W^\re \mid h_1(R_0)=0, \ h_2(F) \subseteq R_0\},\\
  Y' &= \{(R_0,R_1,f,g,h_1,h_2) \in Y'' \mid f(R_0) \subseteq R_1, \ g(R_1) \subseteq R_0\},\\  
  Y &= \{(R_0,R_1,f,g,h_1,h_2) \in Y' \mid h_1g=0, \ fh_2=0\}.
\end{align*}
Both $Y''$ and $Y'$ are defined by linear conditions in $X_\bos \times W^\re$, so are vector bundles; we will elaborate more on this in \S\ref{ss:alg-up}.

If $(R_0,R_1,f,g,h_1,h_2) \in Y$, then it follows by definition that $h_1h_2=0$. This means that we have a projection map
\[
  \pi \colon Y \to \wt{W}.
\]

Finally, we will define a ``flagged'' version of $Y$, denoted $\cY$. Namely, a point of $\cY$ is a tuple ($R_{0,\bullet}, R_{1,\bullet}, f,g,h_1,h_2)$ where:
\begin{itemize}
\item $R_{0,\bullet}$ is a flag of $n+1$ subspaces of $V_0$ (with dimensions denoted by second entry in the subscript) $R_{0,p-q} \subset \cdots \subset R_{0,n+p-q}$,
\item $R_{1,\bullet}$ is a flag of $n+1$ subspaces of $V_1$ (with dimensions denoted by second entry in the subscript) $R_{1,0} \subset \cdots \subset R_{1,n}$ (note that $R_{1,0}=0$ and $R_{1,n}=V_1$),
\item $f \colon V_0 \to V_1$ is a linear map such that $f(R_{0,i+p-q}) \subseteq R_{1,i}$ for all $i$,
\item $g \colon V_1 \to V_0$ is a linear map such that $g(R_{1,i}) \subseteq R_{0,i+p-q}$ for all $i$,  
\item $h_1 \colon V_0 \to E$ is a linear map such that $h_1(R_{0,n+p-q})=0$, and
\item $h_2 \colon F \to V_0$ is a linear map such that $h_2(F)\subseteq R_{0,p-q}$.
\end{itemize}
Note that the conditions above imply that $h_1g=fh_2=h_1h_2=0$. In particular, $\cY$ is a vector bundle, unlike $Y$ which requires some non-linear conditions.

We also get a map
\[
  \rho \colon \cY \to \wt{W}.
\]

\subsection{Proof part 2: algebra downstairs}

We introduce the following objects:
\begin{itemize}
\item Let $Z$ be the reduced locus in $\wt{W}$ where $h_1g=0$ and $fh_2=0$ (we remind the reader that the condition defining $\wt{W}$ inside $W^\re$ is $h_1h_2=0$).
\item Let $\chi(u) \in \bC[Z][u]$ denote the characteristic polynomial of $fg$.
\item Let $\Delta \in \bC[Z]$ be the discriminant of $\chi(u)$.
  
\item Let $\wt{\cZ}=\Spec( \Split_{\bC[Z]}(\chi))$, see \S\ref{ss:fact} for the definition and basic properties.
\item Let $\wt{Z}= \Spec( \Fact^{(q,n-q)}_{\bC[Z]}(\chi))$, see \S\ref{ss:fact}.
\end{itemize}

\begin{proposition} \label{prop:piY-eqns}
  \begin{enumerate}
  \item We have $\pi(Y) = \rho(\cY) = Z$.
  \item The map $\rho \colon \cY \to Z$ factors through $\wt{\cZ}$.
  \item The map $\pi \colon Y \to Z$ factors through $\wt{Z}$.
  \end{enumerate}
\end{proposition}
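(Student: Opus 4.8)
The plan is to prove (1) first and then to read off (2) and (3) by exhibiting, over $\cY$ and over $Y$, the natural splitting, respectively $(q,n-q)$-factorization, of $\chi$ coming from the tautological flag, respectively rank-$q$ subbundle, of $V_1$, and invoking the universal properties of the splitting and factorization rings recalled in \S\ref{ss:fact}.

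For (1): the inclusions $\rho(\cY)\subseteq Z$ and $\pi(Y)\subseteq Z$ are immediate from the definitions (on $Y$ one has $h_1g=0$ and $fh_2=0$ outright, and $h_1h_2=0$ because $h_2(F)\subseteq R_0\subseteq\ker h_1$; on $\cY$ the same three relations follow from the flag conditions at the indices $i=0,n$). There is a forgetful morphism $\cY\to Y$ sending $(R_{0,\bullet},R_{1,\bullet},f,g,h_1,h_2)$ to $(R_{0,p},R_{1,q},f,g,h_1,h_2)$ — the indices $p$ and $q$ occur among the respective flag dimensions since $p-q\le p\le n+p-q$ and $0\le q\le n$, and the six conditions defining $Y$ are read off from those defining $\cY$ at $i=0,q,n$ — and composing it with $\pi$ gives $\rho$, so $\rho(\cY)\subseteq\pi(Y)$. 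It remains to prove $Z\subseteq\rho(\cY)$. Fix $z=(f,g,h_1,h_2)\in Z$. The key observation is that $f(\ker h_1)\subseteq V_1$ is $fg$-invariant: $g(f(\ker h_1))\subseteq\ker h_1$ because $h_1g=0$, hence $fg(f(\ker h_1))\subseteq f(\ker h_1)$. Choose a complete flag $0=T_0\subset\cdots\subset T_n=V_1$ that is $fg$-invariant and has $T_c=f(\ker h_1)$, where $c=\dim f(\ker h_1)$ (take an $fg$-invariant complete flag of $f(\ker h_1)$, then extend it by pulling back an $fg$-invariant complete flag of $V_1/f(\ker h_1)$). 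Put $P_i=\ker h_1\cap f^{-1}(T_i)$; using \eqref{eqn:ineq} one checks $\dim P_i\ge (p-q)+i$ for all $i$: for $i\le c$ one has $\dim P_i=\dim(\ker h_1\cap\ker f)+i\ge (m-n-\dim E)+i\ge(p-q)+i$, and for $i\ge c$ one has $P_i=\ker h_1$, of dimension $\ge m-\dim E\ge n+p-q\ge(p-q)+i$. Now build $R_{0,p-q+i}$ by induction on $i$: let $R_{0,p-q}$ be any $(p-q)$-dimensional subspace with $h_2(F)\subseteq R_{0,p-q}\subseteq P_0=\ker h_1\cap\ker f$ (possible since $\dim h_2(F)\le\dim F\le p-q$), and given $R_{0,p-q+i-1}$ note that $R_{0,p-q+i-1}+g(T_i)$ lies in $P_i$ (using $h_1g=0$ and $fg(T_i)\subseteq T_i$) and has dimension at most $(p-q+i-1)+1$ because $g(T_{i-1})\subseteq R_{0,p-q+i-1}$, so it extends to an $(p-q+i)$-dimensional subspace $R_{0,p-q+i}\subseteq P_i$. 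Setting $R_{1,i}=T_i$ produces a point of $\cY$ over $z$, so $Z\subseteq\rho(\cY)$, and all three sets coincide.

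For (2): since $\cY$ is a vector bundle over a product of partial flag varieties it is smooth, hence reduced, so by (1) the morphism $\rho$ factors through the reduced scheme $Z$. Over $\cY$ the tautological flag $R_{1,\bullet}$ is preserved by $fg$ (indeed $fg(R_{1,i})\subseteq f(R_{0,i+p-q})\subseteq R_{1,i}$), so $fg$ is block upper triangular for it and the pullback of $\chi$ — which is the characteristic polynomial of $fg$ over $\cY$ — factors as $\prod_{i=1}^n(u-\xi_i)$, where $\xi_i\in\bC[\cY]$ is the scalar by which $fg$ acts on the line bundle $R_{1,i}/R_{1,i-1}$. By the universal property of $\Split_{\bC[Z]}(\chi)$ (a $\bC[Z]$-algebra homomorphism to $\bC[\cY]$ is the same as an ordered tuple of functions whose associated monic polynomial is $\chi$), the $\xi_i$ determine a morphism $\cY\to\wt\cZ$ over $Z$ refining $\rho$. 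For (3) one argues the same way: over $Y$ the tautological rank-$q$ subbundle $R_1\subseteq V_1$ is $fg$-invariant because $fg(R_1)\subseteq f(R_0)\subseteq R_1$, so $\chi=\chi_1\chi_2$ with $\chi_1,\chi_2$ the characteristic polynomials of $fg$ on $R_1$ and on $V_1/R_1$, monic of degrees $q$ and $n-q$; by the universal property of $\Fact^{(q,n-q)}_{\bC[Z]}(\chi)$ this factorization yields a $\bC[Z]$-algebra map $\Fact^{(q,n-q)}_{\bC[Z]}(\chi)\to\bC[Y]$, that is, a morphism $Y\to\wt Z$ over $Z$. (That $\pi$ factors through the reduced scheme $Z$ in the first place uses that $Y$ is reduced, which is shown when the geometry of $Y$ is analyzed; alternatively $Y\to\wt Z$ may be obtained by descending $\cY\to\wt\cZ\to\wt Z$ along the surjective forgetful map $\cY\to Y$, since $R_{1,q}$ — and hence the $(q,n-q)$-factorization — is constant on its fibers.)

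The only genuinely non-formal step is the surjectivity $Z\subseteq\rho(\cY)$ in part (1): one must choose the flag $T_\bullet$ of $V_1$ with enough internal compatibility — here, exhausting $f(\ker h_1)$ first — for the nested estimates $\dim P_i\ge(p-q)+i$ to hold, and this is precisely the point at which the hypotheses \eqref{eqn:ineq} are used. Everything else is a direct application of the constructions of \S\ref{ss:fact}.
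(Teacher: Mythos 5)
Your proof is correct, and it takes a genuinely different and arguably cleaner route than the paper's. For part (1), the paper only constructs a preimage in $\cY$ over a dense open subset $U\subset Z$ (where $\psi$ is surjective and $fg$ has distinct nonzero eigenvalues), and then uses irreducibility of $Z$ (via Proposition~\ref{prop:complex-CI}) together with properness of $\rho$ to conclude $\rho(\cY)=Z$. You instead construct a preimage over \emph{every} point of $Z$ directly, by choosing an $fg$-invariant complete flag $T_\bullet$ of $V_1$ exhausting $f(\ker h_1)$ first and then lifting it inductively to a flag in $\ker h_1$ via the estimates $\dim P_i\ge (p-q)+i$; this avoids any appeal to irreducibility of $Z$ at this stage, and the inequalities \eqref{eqn:ineq} are used in essentially the same place (the dimension count), though applied to $\ker h_1$ rather than $\ker\psi$. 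For parts (2) and (3), the paper gives a pointwise description (``an ordering of eigenvalues is a point of $\Spec\Split$''), which implicitly leans on reducedness of $\wt{\cZ}$ to upgrade to a scheme map; you instead produce the morphisms $\cY\to\wt\cZ$ and $Y\to\wt Z$ directly from the $fg$-invariance of the tautological flag $R_{1,\bullet}$ (resp.\ the tautological subbundle $R_1$) via the universal property of the splitting (resp.\ factorization) ring, which is the more scheme-theoretic formulation. Two small remarks: $Y$ is by the paper's convention defined as a reduced scheme, so the factorization of $\pi$ through $Z$ is not an issue and your parenthetical about descending along $\cY\to Y$ is unnecessary (and the surjectivity of $\cY\to Y$ needed there is in fact not completely obvious); also, the paper's density argument for (1) is not wasted effort in context, since the description of $\pi^{-1}(x)$ over $U$ obtained there is reused later in Proposition~\ref{prop:fact-ring-vanish}(2).
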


\begin{proof}
(1)   It is clear that $\rho(\cY) \subseteq \pi(Y) \subseteq Z$, so it suffices to show that $Z \subseteq \rho(\cY)$.

  Let $U$ be the open subset of $Z$ defined by the conditions: 
  \begin{itemize}
  \item $\psi \colon V_0 \to E \oplus V_1$ given by $x \mapsto (h_1(x), f(x))$ is surjective,
  \item The $n$ eigenvalues of $fg \colon V_1 \to V_1$ are distinct and nonzero,
  \item $h_2 \colon F \to V_0$ is injective (this condition is not important for this proof, but we will use it later).
  \end{itemize}
  Then $U$ is non-empty: for concreteness, pick bases $w_1,\dots,w_{\dim F}$ for $F$, $x_1,\dots,x_m$ for $V_0$, $y_1,\dots,y_n$ for $V_1$, and $z_1,\dots,z_{\dim E}$ for $E$.
  \begin{itemize}
  \item Define $f \colon V_0 \to V_1$ by $x_i\mapsto y_i$ for $1 \le i \le n$ and $x_i\mapsto 0$ for $i>n$. 
  \item Define $h_1 \colon V_0 \to E$ by $x_i \mapsto z_{i-n}$ for $n+1 \le i \le n+\dim E$ and $x_i\mapsto 0$ for all other $i$ (possible since $m \ge n + \dim E$).
    
  \item Define $h_2 \colon F \to V_0$ by $w_i \mapsto x_{i+n+\dim E}$ for $1 \le i \le \dim F$ (possible since $m \ge n + \dim E + \dim F$).
  \item Pick $n$ distinct nonzero complex numbers $c_1,\dots,c_n$ and define $g \colon V_1 \to V_0$ by $y_i \mapsto c_i x_i$ for all $i$.
  \end{itemize}
 Proposition~\ref{prop:complex-CI} applies (with $B=V_0$, $A_1=E$, $A_2=F$, and $C_1=C_2=V_1$) by our assumption \eqref{eqn:ineq}, so we see that $Z$ is irreducible. Hence $U$ is dense, and since $\pi$ is a projective morphism, it suffices to show that $U \subseteq \pi(Y)$.

  So suppose that $(f,g,h_1,h_2) \in U$. Pick an ordering of the eigenvalues of $fg$ and for each $i$, let $R_{1,i}$ be the span of the eigenspaces of the first $i$ eigenvalues in this ordering. Next, since $h_1h_2=0$ and $fh_2=0$, we have $h_2(F) \subseteq \ker \psi$. Also, since $\psi$ is surjective, we have
  \[
    \dim \ker \psi = m - n - \dim E \ge p-q,
  \]
  so we may choose a $(p-q)$-dimensional subspace $R'_0\subseteq \ker \psi$ which contains $h_2(F)$. Finally, since $fg$ is invertible, we see that $g(V_1) \cap \ker \psi = 0$. So for all $i$, we set $R_{0,i+p-q} = R'_0 + g(R_{1,i})$, which has dimension $i+p-q$. Since $f(R'_0)=0$ and $fg(R_{1,i}) = R_{1,i}$, we are done.

  (2) First, $\wt{\cZ}$ is reduced by \cite[Proposition 3.1]{superres}. An ordering of the eigenvalues of $fg$ is the same thing as choosing a point in the prime spectrum of the splitting ring of the characteristic polynomial, which implies the factorization of $\rho$.

  (3) This is similar to (2). The difference is that in order to define the subspace $R_0$ and $R_1$ in the preimage of a point in $U$, it suffices to instead choose $q$ of the eigenvalues of $fg$ rather than a complete ordering. This choice is the same as a point in the $(q,n-q)$-factorization ring of the characteristic polynomial of $fg$.
\end{proof}

The fact that $\rho \colon \cY \to Z$ is surjective gives us the following normal form for points in $Z$ and some information about the discriminant $\Delta$.

\begin{corollary} \label{cor:normal-form}
  \begin{enumerate}
  \item   If $(f,g,h_1,h_2) \in Z$, then there is a choice of bases for $V_0,V_1,E,F$ such that
  \begin{align*}
    f= \begin{bmatrix} \smash[b]{\underbrace{0}_{p-q}} & f^\ru & f' \end{bmatrix}, \qquad g = \begin{bmatrix} g' \\ g^\ru \\ 0 \end{bmatrix} \kern-.8em 
    \begin{array}{c} \left.\vphantom{g'} \right\}  {\scriptstyle p-q}
      \\ \vphantom{g^u} \\ \vphantom{0} \end{array},
    \qquad
    h_1 = \begin{bmatrix} \smash[b]{\underbrace{0}_{n+p-q}} & h_1' \end{bmatrix}, \qquad h_2 = \begin{bmatrix} h_2' \\ 0 \end{bmatrix} \kern-.8em 
    \begin{array}{c} \left.\vphantom{h_2'} \right\}  {\scriptstyle p-q}
      \\ \vphantom{0} \end{array},
  \end{align*}
  where $f^\ru$ and $g^\ru$ are upper-triangular $n \times n$ matrices, and the underbraces, respectively right braces, indicate the number of columns, respectively rows, otherwise.

\item With the notation above, $\chi(u)$ is also the characteristic polynomial of $f^\ru g^\ru$, and if the discriminant of $\chi(u)$ is $0$, then we may also choose the bases so that the first two diagonal entries of $f^\ru g^\ru$ are the same.
  
\item The hypersurface in $Z$ defined by $\Delta$, $V(\Delta)$, is irreducible; in fact, any two points can be joined by an irreducible rational curve.
\end{enumerate}
\end{corollary}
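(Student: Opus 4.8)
The plan is to deduce all three parts from the surjectivity of $\rho\colon\cY\to Z$ (Proposition~\ref{prop:piY-eqns}(1)) together with one auxiliary variety.

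\emph{Parts (1) and the first claim of (2).} Given $(f,g,h_1,h_2)\in Z$, I would first lift it along $\rho$ to a point of $\cY$, that is, to flags $R_{0,p-q}\subset\cdots\subset R_{0,n+p-q}\subset V_0$ and $0=R_{1,0}\subset\cdots\subset R_{1,n}=V_1$ compatible with $f,g,h_1,h_2$ as in the definition of $\cY$. Bases of $V_0$ and $V_1$ adapted to these flags (and arbitrary bases of $E$ and $F$) then put the four maps into the stated shape: $h_1(R_{0,n+p-q})=0$ and $h_2(F)\subseteq R_{0,p-q}$ give the forms of $h_1$ and $h_2$, while $f(R_{0,i+p-q})\subseteq R_{1,i}$ and $g(R_{1,i})\subseteq R_{0,i+p-q}$ kill the first $p-q$ columns of $f$ and the last $m-n-(p-q)$ rows of $g$ and make the middle $n\times n$ blocks $f^\ru,g^\ru$ upper triangular. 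This is (1). In the chosen basis of $V_1$ the block form gives $fg=f^\ru g^\ru$ on the nose, and a product of upper-triangular matrices is upper triangular with diagonal entries the products of the diagonal entries, so the characteristic polynomial of $fg=f^\ru g^\ru$ is $\chi(u)=\prod_{i=1}^n(u-d_i)$ with $d_i:=f^\ru_{ii}g^\ru_{ii}$; this gives the first claim of (2).

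\emph{The auxiliary variety.} The rest of (2) and all of (3) I would extract from
\[
  \cY_{12}:=\{(R_{0,\bullet},R_{1,\bullet},f,g,h_1,h_2)\in\cY \mid d_1=d_2\}\subseteq\cY ,
\]
where $fg$ preserves the flag $R_{1,\bullet}$ and $d_i$ is the scalar by which it acts on $R_{1,i}/R_{1,i-1}$; equivalently, $\cY_{12}$ is the preimage of $\{\lambda_1=\lambda_2\}\subseteq\wt\cZ$ under the factorization $\cY\to\wt\cZ$ of $\rho$ from Proposition~\ref{prop:piY-eqns}(2). First I would show $\cY_{12}$ surjects onto $V(\Delta)$; since a point of $\cY_{12}$ over $z$ is exactly a choice of bases as in (1) with the first two diagonal entries of $f^\ru g^\ru$ equal, this is precisely the second claim of (2). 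To prove it, note that $\cY\to\wt\cZ$ is proper ($\cY\to Z$ being projective and $\wt\cZ\to Z$ affine), so its image is closed; over the dense open $U\subseteq Z$ of Proposition~\ref{prop:piY-eqns} the construction there realizes every ordering of the distinct eigenvalues of $fg$, so the image contains the dense set $\wt\cZ|_U$ ($\wt\cZ\to Z$ being finite and free over the irreducible $Z$), hence equals $\wt\cZ$; therefore $\cY_{12}=(\cY\to\wt\cZ)^{-1}(\{\lambda_1=\lambda_2\})$ surjects onto $\{\lambda_1=\lambda_2\}$, which surjects onto $V(\Delta)$ because $\Delta(z)=0$ precisely when $\chi_z$ has a repeated root. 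Second, $\cY_{12}$ is irreducible: $\cY$ is the total space of a vector bundle over the product of flag varieties $\cB$ parametrizing the pairs $(R_{0,\bullet},R_{1,\bullet})$, and $d_1-d_2$ is a regular function on $\cY$ whose restriction to each fibre of $\cY\to\cB$ is, in suitable linear fibre coordinates, the nondegenerate quadratic form $\bar f_1\bar g_1-\bar f_2\bar g_2$ in four coordinates (where $\bar f_i$ and $\bar g_i$ are the maps between the lines $R_{0,p-q+i}/R_{0,p-q+i-1}$ and $R_{1,i}/R_{1,i-1}$ induced by $f$ and $g$); its zero locus in each fibre is an irreducible hypersurface of constant dimension, so $\cY_{12}\to\cB$ is flat with irreducible fibres over the irreducible $\cB$, whence $\cY_{12}$ is irreducible.

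\emph{Part (3) and the main obstacle.} From the two facts above, $V(\Delta)=\rho(\cY_{12})$ is the image of an irreducible variety, hence irreducible. For the rational-curve statement I would use that $\cY_{12}$ is rational (over the big Bruhat cell of $\cB$ it is $\{xy=zw\}\times\bA^N$, an affine quadric cone times affine space) and, more precisely, that any two of its points lie on an irreducible rational curve in $\cY_{12}$, combining rational connectedness of the flag variety $\cB$ with the fact that any two points of a quadric cone lie on a line or a plane conic contained in it; then for $z_1,z_2\in V(\Delta)$ I lift them to $\cY_{12}$, join the lifts by such a curve $C$, and observe that $\rho(C)$ is either the point $z_1=z_2$ or an irreducible rational curve through both. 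The bookkeeping in (1) and the first claim of (2) is routine; the two nontrivial points are the surjectivity $\cY\to\wt\cZ$ used above (a mild strengthening of Proposition~\ref{prop:piY-eqns}) and, above all, upgrading ``irreducible and rational'' to ``any two points lie on a single irreducible rational curve'' for $\cY_{12}$, the delicate case being a pair of points missing the big cell, which I would handle by a $\GL(V_0)\times\GL(V_1)$-translation or a direct rational-connectedness argument. I expect this last point to be the main obstacle.
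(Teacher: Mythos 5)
Parts (1), the first assertion of (2), and the irreducibility claim in (3) are handled correctly, and your proposal is in two places more careful than the paper's own proof. For the second assertion of (2), the paper only exhibits the required lift with $d_1=d_2$ on a dense open subset of $V(\Delta)$; your argument that $\cY\to\wt{\cZ}$ is proper with closed image containing the dense set $\wt{\cZ}|_U$, hence surjective, so that your variety $\cY_{12}$ maps onto $\{\xi_1=\xi_2\}$ and onto all of $V(\Delta)$, yields the statement for every point. And deducing irreducibility of $V(\Delta)$ as the image of the irreducible rank-$4$ quadric-cone bundle $\cY_{12}\to\cB$ is a clean alternative to the paper's route, which is to adapt the explicit construction of \cite[Proposition~5.10]{superres}.

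The genuine gap is the rational-curve claim in (3), which you rightly flag but do not close. Two concrete problems with the sketch. First, the quoted fact about quadric cones needs repair: the plane through two points $P,Q$ of $\{xy=zw\}$ and the vertex $0$ meets the cone in the degenerate conic consisting of the two rulings $\overline{0P}\cup\overline{0Q}$; to get an irreducible conic through $P$ and $Q$ one must take a plane avoiding $0$ and argue that the restriction of the quadratic form is irreducible, which is true for a generic such plane but requires an argument you do not give. Second, and more seriously, joining two points of $\cY_{12}$ lying over distinct base points $b_1\neq b_2\in\cB$ is not addressed: if $C\subseteq\cB$ is a rational curve through $b_1,b_2$, the pullback of the vector bundle $\cY$ to $C\cong\bP^1$ may have negative summands, so there is no reason for $\cY_{12}|_C$ to admit a section through your two lifted points, and a $\GL(V_0)\times\GL(V_1)$-translation moves one of your two points rather than holding both fixed. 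This is exactly the flexibility one surrenders by insisting on building the curve upstairs in $\cY_{12}$: the paper instead works directly in $Z$, where a rational curve in $V(\Delta)$ need not lift along $\rho$, putting both endpoints into the block normal form of (1)--(2) and adapting the curve construction of \cite[Proposition~5.10]{superres}, with the observation that the extra relations $h_1g=fh_2=h_1h_2=0$ (and $d_1=d_2$) are automatic from that block shape along the curve.
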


\begin{proof}
  (1) This follows directly from the fact that $\rho \colon \cY \to Z$ is surjective.
  
  (2) We can adapt the proof above; let $U$ be the open subset of $V(\Delta)$ in which $\psi \colon V_0 \to E \oplus V_1$ is surjective, the eigenvalues of $fg$ are nonzero and have exactly one repeated value, and $h_2$ is injective. In that case, let $\lambda$ be the repeated root, and in the previous proof, we can choose $R_{1,1}$ to be the span of any eigenvector of eigenvalue $\lambda$ and choose $R_{1,2} = \ker(fg-\lambda)^2$, the whole (generalized) eigenspace for $\lambda$.

  (3) Using (2), the proof of \cite[Proposition 5.10]{superres} can be adapted fairly easily. We note that the conditions $h_1g=fh_2=h_1h_2=0$ are automatic from the forms of the matrices, so they do not have to be accounted for separately.
\end{proof}

\begin{proposition} \label{prop:piY-CI}
  $Z$ is a complete intersection in $\wt{W}$ defined by the equations $h_1g=fh_2=0$ and is an irreducible variety with rational singularities.
\end{proposition}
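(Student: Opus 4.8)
The plan is to recognize $Z$ as a special case of the complete intersections of Proposition~\ref{prop:complex-CI} and to transfer the conclusions, adding a short codimension count for the assertion about $\wt W$. First I would set up the dictionary with Proposition~\ref{prop:complex-CI}: take $\cB = V_0$, $\cA_1 = F$, $\cA_2 = E$, $\cC_1 = \cC_2 = V_1$, and identify the tautological maps there with $\alpha_1 = h_2$, $\alpha_2 = h_1$, $\gamma_1 = g$, $\gamma_2 = f$. Under this identification the ambient total space $\bX$ of Proposition~\ref{prop:complex-CI} is exactly $W^\re$, and the three compositions $\alpha_2\alpha_1$, $\gamma_2\alpha_1$, $\alpha_2\gamma_1$ become $h_1h_2$, $fh_2$, $h_1g$. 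Hence the common scheme-theoretic zero locus of these three maps in $W^\re$ --- which Proposition~\ref{prop:complex-CI} asserts is already reduced --- coincides with the reduced scheme $Z$ of the definition, since $\wt W$ is by definition the locus $h_1h_2 = 0$ inside $W^\re$.

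Next I would check the numerical hypothesis $a_1 + a_2 + \max(c_1,c_2) \le b$ of Proposition~\ref{prop:complex-CI}, which here reads $\dim F + \dim E + n \le m$; this follows from \eqref{eqn:ineq} since $m - n \ge \dim E + (p-q) \ge \dim E + \dim F$. Proposition~\ref{prop:complex-CI} then gives immediately that $Z$ is reduced, irreducible, a local complete intersection, normal, and has rational singularities --- in particular it is an irreducible variety with rational singularities.

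For the complete intersection statement inside $\wt W$ I would compare codimensions in $W^\re$. By Corollary~\ref{cor:complex-CI} applied with $\cA_1 = F$, $\cA_2 = E$, $\cB = V_0$ (the hypothesis $\dim F + \dim E \le m$ holding a fortiori), the subvariety $\wt W = \{h_1h_2 = 0\}$ is a complete intersection of codimension $\dim E\cdot\dim F$ in $W^\re$, and in particular it is Cohen--Macaulay. On the other hand, Proposition~\ref{prop:complex-CI} says $Z$ is a complete intersection in $W^\re$ of codimension $a_1a_2 + c_1a_2 + a_1c_2 = \dim E\dim F + n\dim E + n\dim F$. Subtracting, $\codim(Z, \wt W) = n\dim E + n\dim F$, which is exactly the number of entries of the maps $h_1g$ and $fh_2$ defining $Z$ inside $\wt W$; since $\wt W$ is Cohen--Macaulay, an ideal generated by that many elements and cutting out a subscheme of that codimension is generated by a regular sequence, so $Z$ is a complete intersection in $\wt W$. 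The whole argument is essentially bookkeeping; the only point demanding care is getting the dictionary with Proposition~\ref{prop:complex-CI} right --- in particular that $\max(c_1,c_2) = n$ and that the two scheme structures genuinely match so the reducedness assertions line up --- and I do not expect a real obstacle here, since all of the substance is already contained in Proposition~\ref{prop:complex-CI}.
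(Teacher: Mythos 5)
Your proposal is correct and follows essentially the same route as the paper: both set up the same dictionary with Proposition~\ref{prop:complex-CI} (taking $A_1 = F$, $A_2 = E$, $B = V_0$, $C_1 = C_2 = V_1$), verify the inequality $\dim E + \dim F + n \le m$ from \eqref{eqn:ineq}, and deduce that the three equations $h_1h_2 = h_1g = fh_2 = 0$ cut out an irreducible, reduced complete intersection with rational singularities in $W^\re$. The only difference is in the last step: where the paper concludes with a short ``a fortiori'' (permuting the regular sequence and quotienting by $h_1h_2$), you prove the same fact by computing $\codim(Z,\wt W)$ and invoking Cohen--Macaulay unmixedness; both are routine and yield the same conclusion.
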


\begin{proof}
  First consider the equations $h_1g=fh_2=h_1h_2=0$ in $\bC[W^\re]$. This puts us in the context of Proposition~\ref{prop:complex-CI} with $A_1=F$, $A_2 = E$, $B = V_0$, $C_1=C_2=V_1$. In this case, the inequality reduces to $\dim F + \dim E + n \le m$, which holds by our assumption. So the equations are a regular sequence in $\bC[W^\re]$ and define an integral scheme that has rational singularities. A fortiori, the equations $h_1g=fh_2=0$ are a regular sequence in $\wt{S}$.
\end{proof}

\begin{proposition}
  $\wt{\cZ}$ is a normal and Cohen--Macaulay variety.
\end{proposition}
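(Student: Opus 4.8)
The plan is to deduce normality of $\wt{\cZ} = \Spec(\Split_{\bC[Z]}(\chi))$ from the splitting-ring criterion of Proposition~\ref{prop:split-normal}, applied to $A = \bC[Z]$ and the monic polynomial $\chi$, and to deduce Cohen--Macaulayness separately from the fact that $\Split_{\bC[Z]}(\chi)$ is a finite free $\bC[Z]$-module.

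For normality I would check the three hypotheses of Proposition~\ref{prop:split-normal} in turn. First, $A=\bC[Z]$ is normal: by Proposition~\ref{prop:piY-CI}, $Z$ is an irreducible complete intersection with rational singularities, hence a normal domain. Second, $\Delta$ is a nonzerodivisor: since $A$ is a domain this just means $\Delta\neq 0$, which holds because on the dense open $U\subseteq Z$ constructed in the proof of Proposition~\ref{prop:piY-eqns} the characteristic polynomial $\chi$ of $fg$ has $n$ distinct roots. Third, and this is the essential point, $V(\Delta,\partial\Delta)$ must have codimension $\ge 2$ in $Z$. Because $\Delta$ is a nonzero nonunit in the normal domain $A$, the locus $V(\Delta)$ is pure of codimension $1$, and by Corollary~\ref{cor:normal-form}(3) it is irreducible; since $V(\Delta,\partial\Delta)\subseteq V(\Delta)$, it suffices to show that $\Delta$ vanishes to order exactly $1$ at the generic point $\eta$ of $V(\Delta)$ --- equivalently, that it generates the maximal ideal of the DVR $\cO_{Z,\eta}$. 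Here I would invoke Corollary~\ref{cor:normal-form}(2): at a general point of $V(\Delta)$ one may choose bases so that $\chi(u)=\prod_{i=1}^n(u-d_i)$, where the $d_i$ are the diagonal entries of the upper-triangular matrix $f^\ru g^\ru$ and exactly one pair of them coincides. With $\Delta=\prod_{i<j}(d_i-d_j)^2$ in this local model, the order of vanishing along $V(\Delta)$ is computed exactly as in the corresponding argument in \cite{superres}, reducing to the elementary fact that the discriminant of a monic polynomial has a simple zero along the generic stratum of polynomials with a single repeated root. Thus $V(\Delta,\partial\Delta)$ is a proper closed subset of the irreducible variety $V(\Delta)$, hence of codimension $\ge 2$ in $Z$, and Proposition~\ref{prop:split-normal} yields that $\wt{\cZ}$ is normal.

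For Cohen--Macaulayness, $\bC[Z]$ is Cohen--Macaulay because $Z$ is a complete intersection (Proposition~\ref{prop:piY-CI}), and $\Split_{\bC[Z]}(\chi)$ is a free $\bC[Z]$-module of rank $n!$ by \cite[Proposition 3.1]{superres}; a module-finite ring extension that is free over a Cohen--Macaulay ring is again Cohen--Macaulay (both depth and Krull dimension are preserved), so $\wt{\cZ}$ is Cohen--Macaulay. That $\wt{\cZ}$ is reduced was already recorded in the proof of Proposition~\ref{prop:piY-eqns}, and it is in fact irreducible because $\chi$ is irreducible over the fraction field $\bC(Z)$ --- its Galois group being the full symmetric group $\fS_n$ --- so $\Split_{\bC(Z)}(\chi)$ is a field; hence $\wt{\cZ}$ is a variety.

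The main obstacle is the codimension-$2$ estimate for $V(\Delta,\partial\Delta)$, which requires the local structure of $Z$ and of the discriminant $\Delta$ along the generic double-root stratum; Corollary~\ref{cor:normal-form}(2) is exactly the tool that reduces this to a standard statement about monic polynomials. Everything else --- normality, irreducibility, the complete-intersection and rational-singularities properties of $Z$, and the structure theory of splitting rings from \S\ref{ss:fact} --- is already in place.
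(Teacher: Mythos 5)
Your overall plan matches the paper's: deduce Cohen--Macaulayness from the fact that $\Split_{\bC[Z]}(\chi)$ is a finite free module over the complete intersection $\bC[Z]$, and deduce normality by checking the three hypotheses of Proposition~\ref{prop:split-normal}. Conditions (1) and (2), and the reduction of condition (3) to the statement that $V(\Delta,\partial\Delta)$ is a proper subset of the irreducible divisor $V(\Delta)$, all agree with what the paper does.

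The one place where you and the paper genuinely diverge is the verification that $\Delta$ does not lie in $\fp^2 A_\fp$ at the generic point $\fp$ of $V(\Delta)$, and this is where your argument has a gap. You invoke Corollary~\ref{cor:normal-form}(2) to put a single point in normal form, write $\Delta = \prod_{i<j}(d_i-d_j)^2$ ``in this local model,'' and then assert that the order of vanishing ``is computed exactly as in the corresponding argument in \cite{superres}, reducing to the elementary fact that the discriminant of a monic polynomial has a simple zero along the generic stratum.'' But Corollary~\ref{cor:normal-form}(2) is a pointwise statement about orbits under the group action; it does not by itself give a local chart, slice, or \'etale neighborhood in which the $d_i$ are coordinate functions, so the computation $\Delta = \prod(d_i-d_j)^2$ is not licensed as a local expansion of $\Delta$ on $Z$. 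Equivalently: the ``elementary fact'' you cite lives on the affine space of monic polynomials, and to transport it to $Z$ you would need to know that the map $Z \to \bA^n$, $z \mapsto \chi_z$, is smooth (or at least that the pullback of the discriminant divisor is reduced) at a general point of $V(\Delta)$ --- and that is precisely the content one must prove, not something that can be assumed. The paper sidesteps this by exhibiting an explicit $\bC[\epsilon]/(\epsilon^2)$-point $\tilde{x}$ of $Z$ (based on Corollary~\ref{cor:normal-form}(1)) whose underlying $\bC$-point $x$ lies in $Z_{\rm sm} \cap V(\Delta)$, and directly computing that the discriminant of $\chi(u)$ at $\tilde{x}$ is a nonzero multiple of $\epsilon$, hence $d\Delta(x) \ne 0$; combined with irreducibility of $V(\Delta)$ this finishes condition (3) without any appeal to a local model. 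You could likely repair your argument along these lines, but as written the key step is asserted rather than proved.

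Two minor remarks. You separately argue irreducibility of $\wt{\cZ}$ via the Galois group of $\chi$ over $\bC(Z)$ being $\fS_n$; the paper's proof does not include this, and if you want to keep it you should justify why the Galois group is full rather than asserting it. Also, you do not need the smoothness check $x\in Z_{\rm sm}$ that the paper performs, because your formulation works at the generic point of $V(\Delta)$ where $A_\fp$ is automatically a DVR (since $Z$ is normal); that part of your reduction is fine.
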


The same is true for $\wt{Z}$, but we will prove something stronger shortly, so we do not address it now.

\begin{proof}
  First, $Z$ is Cohen--Macaulay by Proposition~\ref{prop:piY-CI}. Since $\bC[\wt{\cZ}]$ is a finite rank free $\bC[Z]$-module \cite[Proposition 3.1(a)]{superres}, it is also Cohen--Macaulay.
  
  Next, we use Proposition~\ref{prop:split-normal} to prove that $\wt{\cZ}$ is normal, which has 3 conditions. Again appealing to Proposition~\ref{prop:piY-CI}, we know that $\bC[Z]$ is a normal domain. This immediately implies (1). Since $\Delta$ is not identically 0, as can be seen from the normal form in Corollary~\ref{cor:normal-form}(1), (2) also holds. For (3), since $Z$ is normal, its singular locus has codimension at least 2, so it suffices to show that $V(\Delta, \partial \Delta) \cap Z_{\rm sm}$ (the subscript denotes the smooth locus) has codimension at least 2. If $x \in Z_{\rm sm}$, then $x \in V(\Delta, \partial \Delta)$ if and only if $\Delta(x)=0$ and its differential $d\Delta$ is 0 in the fiber of the cotangent bundle over $x$.

  From Corollary~\ref{cor:normal-form}(3), we know that $V(\Delta)$ is irreducible, so it suffices to show that $V(\Delta,\partial \Delta)$ is a proper subset of $V(\Delta)$. To do this, let $\lambda, \mu_1,\dots, \mu_{n-2}$ be distinct complex numbers, let $A$ be the diagonal square matrix of size $n-2$ whose entries are $\mu_1,\dots,\mu_{n-2}$, and consider the following $\bC[\epsilon]/(\epsilon^2)$-point $\tilde{x}$ of $Z$ using the notation of the normal form in Corollary~\ref{cor:normal-form}:
  \[
    f^\ru = \begin{bmatrix} \lambda & 1 \\ \epsilon & \lambda \\ & & A \end{bmatrix}, \quad f' = 0,\quad g' = 0, \quad g^\ru = {\rm id}_n, \quad h_1' = 0, \quad h'_2 = \begin{bmatrix} {\rm id}_{\dim F} \\ 0 \end{bmatrix},
  \]
  where ${\rm id}_d$ denotes the $d \times d$ identity matrix, and in $h'_2$, the $0$ refers to a row of $p-q-\dim F$ zeroes. Let $x$ be the corresponding $\bC$-point (i.e., setting $\epsilon=0$). First, the map $V_1 \oplus F \to V_0$ given by the sum $g + h_2$ has full rank, so by the proof of Proposition~\ref{prop:complex-CI}, $x \in Z_{\rm sm}$. Also, by inspection, $x \in V(\Delta)$. However, the discriminant of $\chi(u)$ at $\tilde{x}$ is a nonzero scalar multiple of $\epsilon$, and hence $x \notin V(\Delta, \partial \Delta)$.
\end{proof}

Our next goal is to show that $\wt{\cZ}$ and $\wt{Z}$ have rational singularities. Since $\wt{Z}$ is a quotient of $\wt{\cZ}$ by a finite group (Proposition~\ref{prop:fact-ring}), it suffices to deal with $\wt{\cZ}$ \cite[Corollaire]{boutot}. The proof is very similar to the one used in \cite[\S 5.6]{superres}, so we will use similar notation.

Define the following closed subsets of $Z$:
\begin{itemize}
\item $D_0$ is the locus where $h_2 \colon F \to V_0$ is not injective.
\item $D_1$ is the locus such that $\chi(0)=0$.
\item $D_2$ is the locus where $\chi(u)$ has a repeated root.
\item $D_3$ is the locus where at least one of the following happens:
  \begin{itemize}
  \item $\chi(u)$ has a triple root, or
  \item $\chi(u)$ has two repeated roots, or
  \item $\chi(u)$ has a unique repeated root, but the corresponding Jordan block of $fg$ is a scalar matrix.
  \end{itemize}
\item $D_4 = D_1 \cap D_2$.
\item $D_5 = D_0 \cup D_1 \cup D_3$.
\item $D_6 = D_0 \cap D_2$.
\end{itemize}

Finally, we set $U_i = Z\setminus D_i$ for all $i$. Given any map $Z' \to Z$, let $U_i(Z')$ or $D_i(Z')$ denote the inverse image of $U_i$ or $D_i$, respectively, in $Z'$.

\begin{lemma} \label{lem:codim-lemma}
  \begin{enumerate}
  \item Each of $D_3(\cY)$, $D_4(\cY)$, and $D_6(\cY)$ has codimension $\ge 2$ in $\cY$.
  \item If $p-q=\dim F$, then $\rho \colon U_5(\cY) \to U_5(\wt{\cZ})$ is an isomorphism and hence $\rho \colon \cY \to \wt{\cZ}$ is birational.
  \end{enumerate}
\end{lemma}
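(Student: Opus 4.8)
The plan is to exploit that $\cY$ is a vector bundle over the smooth projective variety
\[
  \cF := \Fl(p-q,\,p-q+1,\dots,n+p-q;\,V_0)\times\Fl(0,1,\dots,n;\,V_1)
\]
recording the two flags $(R_{0,\bullet},R_{1,\bullet})$; indeed every condition carving $\cY$ out of $\cF\times W^\re$ is linear in $(f,g,h_1,h_2)$. Since $\cY\to\cF$ has constant fibre dimension, for (1) it suffices to bound $\codim(D_i(\cY)\cap\cY_b,\cY_b)$ in one fibre $\cY_b$. Fix $b$, i.e.\ fix the two flags and choose adapted bases of $V_0$ and $V_1$. A short inspection of the support constraints then shows that on $\cY_b$ the composite $fg$ is upper triangular with respect to $R_{1,\bullet}$, with $i$-th diagonal entry $\delta_i=a_ib_i$, where $a_1,b_1,\dots,a_n,b_n$ are $2n$ distinct among the free linear coordinates on $\cY_b$ (the extreme entries of $f$ and of $g$ allowed by the flag conditions). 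Hence $\chi(u)=\prod_{i=1}^n(u-\delta_i)$ on $\cY$, and the eigenvalue map $\mathrm{ev}_b=(\delta_1,\dots,\delta_n)\colon\cY_b\to\bC^n$ is, up to a trivial affine factor, the $n$-fold product of the multiplication map $\bC^2\to\bC$, $(a,b)\mapsto ab$. Because $\bC[a,b]$ is free over $\bC[ab]$, this $\mathrm{ev}_b$ is flat with all fibres equidimensional, so $\codim(\mathrm{ev}_b^{-1}(S),\cY_b)=\codim(S,\bC^n)$ for every closed $S\subseteq\bC^n$.

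Granting that, part (1) becomes bookkeeping of the sort carried out in \cite[\S 5.6]{superres}. The part of $D_3$ where $\chi$ has a root of multiplicity $\ge 3$, or two distinct repeated roots, is $\mathrm{ev}_b^{-1}$ of a finite union of codimension-$2$ linear subspaces of $\bC^n$; the remaining part of $D_3$ (a unique double root $\lambda$ whose generalized $fg$-eigenspace is scalar) lies over the codimension-$1$ locus $\{\delta_i=\delta_j\}$ and there imposes one further equation on the off-diagonal entries of $fg$ which, in the explicit coordinates above, is not identically satisfied --- so codimension $\ge 2$. Likewise $D_4=D_1\cap D_2$ pulls back to $\mathrm{ev}_b^{-1}\!\big(\bigcup_k\bigcup_{i<j}\{\delta_k=0,\ \delta_i=\delta_j\}\big)$, a union of codimension-$2$ linear subspaces. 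Finally $D_0(\cY)\cap\cY_b$ is the non-injectivity locus of $h_2\in\hom(F,R_{0,p-q})$, of codimension $(p-q)-\dim F+1$, governed by coordinates disjoint from those cutting out $D_2(\cY)\cap\cY_b=\mathrm{ev}_b^{-1}\!\big(\bigcup_{i<j}\{\delta_i=\delta_j\}\big)$ (codimension $1$); hence $\codim(D_6(\cY)\cap\cY_b,\cY_b)\ge(p-q)-\dim F+2\ge 2$. (When $n\le 1$ the sets $D_2,D_3,D_4,D_6$ are empty and there is nothing to check.)

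For (2), assume $p-q=\dim F$. First I would record that $\rho\colon\cY\to\wt\cZ$ is proper: $\cY\to W^\re$ is projective (a closed immersion into $\cF\times W^\re$ followed by the projection), hence $\cY\to\wt W$ is proper, and it factors through the separated morphism $\wt\cZ\to\wt W$ by Proposition~\ref{prop:piY-eqns}(2). Now take $z=(f,g,h_1,h_2)\in U_5$ and a point $\zeta\in\wt\cZ$ above it --- equivalently, an ordering of the roots of $\chi_z$ with multiplicity. Since $z\notin D_0\cup D_1\cup D_3$, the operator $fg$ is invertible and has minimal polynomial equal to $\chi_z$ (at most one double root, and then a genuine non-scalar $2\times2$ Jordan block), i.e.\ $fg$ is cyclic; so there is a unique $fg$-invariant complete flag of $V_1$ realizing the prescribed ordered eigenvalue sequence, which forces $R_{1,\bullet}$. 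Since $h_2$ is injective and $\dim R_{0,p-q}=p-q=\dim F$, necessarily $R_{0,p-q}=\im h_2$, and then $R_{0,i+p-q}=\im h_2\oplus g(R_{1,i})$ for each $i$ (a direct sum, since $fg$ invertible gives $g(V_1)\cap\ker f=0$ while $\im h_2\subseteq\ker f$). Thus $\rho^{-1}(\zeta)$ is a single point, so $\rho|_{U_5(\cY)}\colon U_5(\cY)\to U_5(\wt\cZ)$ is proper and quasi-finite, hence finite; and over the dense open $U\subseteq U_5$ of Proposition~\ref{prop:piY-eqns} the fibres are moreover single reduced points, so in characteristic $0$ the map $\rho\colon\cY\to\wt\cZ$ is birational. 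A finite birational morphism onto a normal variety is an isomorphism, so $\rho\colon U_5(\cY)\to U_5(\wt\cZ)$ is an isomorphism; in particular $\rho\colon\cY\to\wt\cZ$ is birational.

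The step I expect to be the main obstacle is the single-double-eigenvalue stratum of $Z$, where $\wt\cZ\to Z$ is ramified: I must confirm that $D_3$ excises exactly the configurations --- $fg$ not cyclic --- that would otherwise make the $\rho$-fibre jump in (2) and keep the bad locus only codimension $1$ in (1), and I must verify that the ``scalar Jordan block'' condition genuinely contributes an extra independent equation. This is precisely where the explicit form $\delta_i=a_ib_i$ and the coordinate description of the off-diagonal entries of $fg$ (together with the inequalities \eqref{eqn:ineq}, which guarantee the relevant coordinates are actually free) do the work; everything else is routine.
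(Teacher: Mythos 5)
Your proposal is correct and follows essentially the same route as the paper: reduce to a fiber of the vector bundle $\cY\to\cF$ via equivariance, use the upper-triangular normal form of Corollary~\ref{cor:normal-form} so that $fg=f^{\mathrm u}g^{\mathrm u}$ with diagonal $\delta_i=a_ib_i$, analyze the eigenvalue map, and for $D_6$ exploit the disjointness of the $h_2'$-variables from the $D_2$-variables. The paper simply cites \cite[Propositions 5.12, 5.13]{superres} for the $D_3$, $D_4$, and birationality steps, and your argument is in effect a faithful reconstruction of those; the one place you rightly flag as needing care (that the scalar-Jordan-block condition cuts out a proper subvariety of the irreducible quadric $\{\delta_i=\delta_j\}$) is handled there, and your $D_6$ estimate $\codim\ge(p-q)-\dim F+2$ is in fact slightly more careful than the paper's phrasing, which speaks of $(p-q)$-minors of $h_2'$ and implicitly assumes $\dim F=p-q$.
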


\begin{proof}
(1) We will adapt the proof of \cite[Proposition 5.12]{superres}.

By equivariance, the restrictions of $D_3(\cY)$, $D_4(\cY)$, or $D_6(\cY)$ to any fiber over $\Fl(p-q,\dots,p-q+n; V_0) \times \Fl(1,\dots,n-1;V_1)$ is isomorphic to any other. So it suffices to show that in a given fiber of the vector bundle, these restrictions have codimension $\ge 2$. Pick bases of $V_0$ and $V_1$ adapted to the particular pair of flags, i.e., for each subspace of dimension $i$, it is the span of the first $i$ basis vectors. We also pick bases for $E$ and $F$. With respect to these bases, we may assume that $(f,g,h_1,h_2)$ are in the form given by Corollary~\ref{cor:normal-form} and we will continue to use the notation there.

With this setup, the proof that $D_3(\cY)$ and $D_4(\cY)$ have codimension $\ge 2$ in $\cY$ now follows essentially in the same way as in \cite[Proposition 5.12]{superres}, so we will just explain $D_6(\cY)$. Note that the variables in $h_2'$ (the first $p-q$ rows of $h_2$) are disjoint from the variables used in the equations that define $D_2(\cY)$. In particular, each minor of $h_2'$ of size $p-q$ is a nonzerodivisor on $D_2(\cY)$ and there is at least one of them, so intersecting with $D_0(\cY)$ strictly decreases the codimension, i.e., the codimension of $D_6(\cY)$ is at least 2.

    (2) This is analogous to the first part of the proof of \cite[Proposition 5.13]{superres} with one additional observation needed: in the proof of Proposition~\ref{prop:piY-eqns} above, the subspace $R_0'$ is uniquely determined since $\dim h_2(F) = p-q$.
\end{proof}

\begin{proposition} \label{prop:fact-ring-vanish}
  \begin{enumerate}
  \item $\wt{\cZ}$ and $\wt{Z}$ have rational singularities.
  \item   $\pi_* \cO_Y = \cO_{\wt{Z}}$ and $\rR^i \pi_* \cO_Y = 0$ for all $i>0$.
  \end{enumerate}
\end{proposition}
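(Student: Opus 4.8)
The plan is to follow the strategy of \cite[\S5.6]{superres}. For part (1), since $\wt{Z}$ is the quotient of $\wt{\cZ}$ by the finite group $\fS_q\times\fS_{n-q}$ (Proposition~\ref{prop:fact-ring}(3)), by \cite[Corollaire]{boutot} it suffices to prove that $\wt{\cZ}$ has rational singularities. We already know $\wt{\cZ}$ is normal and Cohen--Macaulay, so it remains to exhibit a proper morphism to $\wt{\cZ}$ from a smooth variety along which the derived pushforward of the structure sheaf is $\cO_{\wt{\cZ}}$ (a variety receiving such a morphism has rational singularities). The natural candidate is $\rho\colon\cY\to\wt{\cZ}$, which exists by Proposition~\ref{prop:piY-eqns}(2): the space $\cY$ is a vector bundle over a product of flag varieties, hence smooth, and $\rho$ is proper since $\cY$ is closed in the product of that flag variety with the affine scheme $\wt{W}$. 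So the heart of part (1) is the identity $R\rho_*\cO_\cY=\cO_{\wt{\cZ}}$.

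To establish this I would describe $\rho$ explicitly via the normal form of Corollary~\ref{cor:normal-form}: over a suitable dense open locus of $\wt{\cZ}$ (cut out using the open subsets $U_i$), the flags needed to lift a point of $\wt{\cZ}$ to $\cY$ are uniquely determined when $p-q=\dim F$ (so $\rho$ is birational there, by Lemma~\ref{lem:codim-lemma}(2)) and otherwise vary in a single Grassmannian; in either case $\rho$ restricted to this open set is flat with connected fibers satisfying $H^{>0}(\cO)=0$, so the identity holds there. Lemma~\ref{lem:codim-lemma}(1) then provides codimension-$\ge 2$ bounds on the remaining bad loci $D_3(\cY)$, $D_4(\cY)$, $D_6(\cY)$, and combining these with the Cohen--Macaulayness of $\cY$ and the normality of $\wt{\cZ}$ one propagates the identity over all of $\wt{\cZ}$: the sheaves $R^i\rho_*\cO_\cY$ with $i>0$ are supported in codimension $\ge 2$ and die by a depth argument, while $\rho_*\cO_\cY$ is a reflexive $\cO_{\wt{\cZ}}$-algebra agreeing with $\cO_{\wt{\cZ}}$ off codimension $2$, hence equal to it by normality. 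This gives $R\rho_*\cO_\cY=\cO_{\wt{\cZ}}$, so $\wt{\cZ}$, and therefore $\wt{Z}$, has rational singularities.

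For part (2) I would first note that $Y$ is itself mild. Inside the vector bundle $Y'$ over $X_\bos$, the equations $h_1g=0$ and $fh_2=0$ defining $Y$ translate, using $g(R_1)\subseteq R_0\subseteq\ker h_1$ together with $h_2(F)\subseteq R_0$ and $f(R_0)\subseteq R_1$, into the vanishing of the tautological compositions $\cQ_1\to\cQ_0\to E$ and $F\to\cR_0\to\cR_1$; the inequalities $(n-q)+\dim E\le m-p$ and $\dim F+q\le p$ coming from \eqref{eqn:ineq} are precisely the hypotheses of Corollary~\ref{cor:complex-CI}, so applying it twice (the two conditions involve disjoint variables) shows $Y$ is reduced, irreducible, a local complete intersection, normal, and has rational singularities. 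Now $\pi\colon Y\to\wt{Z}$ is proper and surjective (Proposition~\ref{prop:piY-eqns}(1),(3)) with general fiber a point or a Grassmannian, so $\pi_*\cO_Y=\cO_{\wt{Z}}$ by normality of $\wt{Z}$, and $R^{>0}\pi_*\cO_Y=0$ follows by running the codimension analysis of part (1) for $\pi$ directly --- measuring the bad loci inside $Y$ via the map $\cY\to Y$ and Lemma~\ref{lem:codim-lemma} --- so that $R^i\pi_*\cO_Y$ with $i>0$ is again supported in codimension $\ge 2$ on the Cohen--Macaulay variety $\wt{Z}$ and vanishes.

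The main obstacle is the vanishing of the higher direct images, $R^{>0}\rho_*\cO_\cY=0$ and $R^{>0}\pi_*\cO_Y=0$. Neither $\wt{\cZ}$ nor $Y$ is literally the total space of a non-negatively graded homogeneous bundle over a flag variety, so one cannot simply invoke a Kempf-collapsing or Bott-vanishing statement; the real work is to use the flagged space $\cY$ and the explicit normal form of Corollary~\ref{cor:normal-form} to reduce to the locus where the collapsing maps are isomorphisms or Grassmannian bundles, and then to check --- using precisely the codimension-$\ge 2$ estimates of Lemma~\ref{lem:codim-lemma} together with the Cohen--Macaulay and normality properties already in hand --- that the contribution supported over the small bad loci is zero.
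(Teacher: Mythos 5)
Your broad outline matches the paper's approach — reduce $\wt{Z}$ to $\wt{\cZ}$ via Boutot, establish rational singularities for $\wt{\cZ}$ using the proper map $\rho\colon\cY\to\wt{\cZ}$ from the smooth variety $\cY$, and then obtain part (2) from the Grassmannian-fiber description of $\pi$. But the execution of the core vanishing argument has two genuine gaps.

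First, the paper does \emph{not} attempt to prove $\rR\rho_*\cO_\cY=\cO_{\wt{\cZ}}$ directly when $p-q>\dim F$. It first restricts to the case $p-q=\dim F$, where by Lemma~\ref{lem:codim-lemma}(2) the map $\rho$ is birational, and then reduces the general case to this one by setting $F'=F\oplus\bC^{p-q-\dim F}$ and using that $\bC[\wt{\cZ}]\to\bC[\wt{\cZ}']$ admits a $\bC[\wt{\cZ}]$-linear splitting, so $\wt{\cZ}$ inherits rational singularities from $\wt{\cZ}'$ by Boutot's theorem. Your proposal treats the non-birational case head-on, and then the claim that the higher direct images ``die by a depth argument'' because they are supported in codimension $\ge 2$ is unjustified. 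Support in codimension $\ge 2$ does \emph{not} by itself force $R^{>0}$ to vanish; the fact that it does in the birational setting comes from Grauert--Riemenschneider vanishing plus Grothendieck duality plus $f_*\omega_\cY=\omega_{\wt{\cZ}}$, and none of that carries over when $\rho$ has positive-dimensional generic fibers. Without the reduction to the birational case (or some substitute), your part (1) does not close.

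Second, and independent of the birationality issue, the locus over which your ``nice fiber'' argument applies is the complement of $D_5(\cY)=D_0(\cY)\cup D_1(\cY)\cup D_3(\cY)$, and this is \emph{not} codimension $\ge 2$ in $\cY$: the loci $D_0$ (where $h_2$ drops rank) and $D_1$ (where $\chi(0)=0$) are hypersurfaces. The paper's argument covers $\cY$ by two opens: $U_5(\wt{\cZ})$, which is smooth because $\rho$ is an isomorphism there, and $U_2(\wt{\cZ})$, which has rational singularities because $U_2(\wt{\cZ})\to U_2\subseteq Z$ is \'etale and $Z$ has rational singularities by Proposition~\ref{prop:piY-CI}. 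It is the union $U_2\cup U_5$ whose complement, namely $D_2\cap D_5 = D_6(\cY)\cup D_4(\cY)\cup(D_2(\cY)\cap D_3(\cY))$, is codimension $\ge 2$ by Lemma~\ref{lem:codim-lemma}(1), which is why those three loci (and not all of $D_5$) appear. Your proposal invokes only the $U_5$-type locus and does not mention the \'etale transport over $U_2$; without it you would be asserting that $D_0$ and $D_1$ can be ignored, which the dimension count does not permit. Once these two ingredients are added — the Boutot reduction to $p-q=\dim F$ and the \'etale argument over $U_2$, followed by the criterion from \cite[Proposition 4.1]{superres} — the proof goes through, and part (2) then follows from \cite[Proposition 4.2]{superres} with $\wt{Z}$'s rational singularities (the conclusion of part (1)) and the Grassmannian fiber description supplying the hypotheses, rather than by the parallel codimension argument you sketch.
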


\begin{proof}
  (1) First assume that $p-q=\dim F$. In that case, $U_5(\wt{\cZ})$ is smooth by Lemma~\ref{lem:codim-lemma}(b), and hence also has rational singularities.  Also, the map $U_2(\wt{\cZ}) \to U_2$ is \'etale \cite[Proposition 3.1(d)]{superres} (this part does not use that $p-q=\dim F$). Since $Z$ has rational singularities (Proposition~\ref{prop:piY-CI}), so does $U_2$, and so $U_2(\wt{\cZ})$ also has rational singularities. 

  The complement of $U_2(\cY) \cup U_5(\cY)$ in $\cY$ is
  \[
    D_2(\cY) \cap (D_0(\cY) \cup D_1(\cY) \cup D_3(\cY)) = D_6(\cY) \cup D_4(\cY) \cup (D_2(\cY) \cap D_3(\cY)).
  \]
  By Lemma~\ref{lem:codim-lemma}(a), each of these three subsets in the union on the right side has codimension at least 2. So we can use \cite[Proposition 4.1]{superres} to conclude that $\wt{\cZ}$ has rational singularities.

  Now we consider the general case where we only assume that $p-q\ge \dim F$. In that case, define $F' = F \oplus \bC^{p-q-\dim F}$ and let $\wt{\cZ}'$ be the variety constructed in the same way as $\wt{\cZ}$, except with $F'$ replacing $F$. Then we have a natural inclusion $\bC[\wt{\cZ}] \to \bC[\wt{\cZ}']$ which admits a splitting as a $\bC[\wt{\cZ}]$-module. Then $\wt{\cZ}$ inherits the rational singularities property from $\wt{\cZ}'$ by \cite[Th\'eor\`eme]{boutot}.

  As mentioned before, using \cite[Corollaire]{boutot}, we also get that $\wt{Z}$ has rational singularities since it is a quotient of $\wt{\cZ}$ by the finite group $\fS_q \times \fS_{n-q}$ (Proposition~\ref{prop:fact-ring}).

  (2) Since $\wt{\cZ}$ has rational singularities, we will apply \cite[Proposition 4.2]{superres}. We take $U$ to be the open subset defined in the proof of Proposition~\ref{prop:piY-eqns}. From the proof, we see that if $x \in U$, then $\pi^{-1}(x)$ is isomorphic to the Grassmannian of $(p-q-\dim F)$-dimensional subspaces of a vector space of dimension $m-n-\dim E - \dim F$, namely, $\ker \psi / h_2(F)$.
\end{proof}

\begin{corollary} \label{cor:TorwtS}
  Set $A = \Fact^{(q,n-q)}_{\bC}(u^n) \cong \rH^\bullet_{\rm sing}(\Gr_q(V_1); \bC)$.
  \begin{enumerate}
  \item $A \otimes T = \Tor^S_0(\pi_* \cO_Y, \bC)$, and each Tor group $\Tor^S_i(\pi_* \cO_Y, \bC)$ is a free graded $A \otimes T$-module where the module structure comes from the multiplication on Tor.
    
      \item  The degree $a$ component of $\Tor^S_i(\pi_* \cO_Y, \bC)$ is nonzero only if $a \ge i$ and $a-i$ is even. In that case, we have
  \[
    \Tor^S_i(\pi_* \cO_Y, \bC)_{2j+i} = A^{2j} \otimes \bigwedge^i((F \otimes V_1^*) \oplus (V_1 \otimes E^*)) \otimes T.
  \]
\end{enumerate}
\end{corollary}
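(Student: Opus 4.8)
\emph{Overall approach and the reduction to $\wt S$.} By Proposition~\ref{prop:fact-ring-vanish}(2), $\pi_*\cO_Y=\cO_{\wt Z}=\Fact^{(q,n-q)}_{\bC[Z]}(\chi)$, so the task is to compute $\Tor^S_\bullet(\cO_{\wt Z},\bC)$. The plan is to transfer this successively to $\wt S$ and then to $\bC[Z]$, where a Koszul complex does all the work. First, the complex $\cO_{\wt Z}\otimes_\bC\bigwedge^\bullet W^*$, with the Koszul differential given by the coordinates $W^*$ acting on $\cO_{\wt Z}$, computes both $\Tor^S_\bullet(\cO_{\wt Z},\bC)$ (via the Koszul resolution of $\bC$ over $S$) and $\Tor^{\wt S}_\bullet(\cO_{\wt Z},T)$ (via the Koszul resolution of $T$ over $\wt S$, legitimate because $W^*$ is a regular sequence in $\wt S=S\otimes_\bC T$ with quotient $T$). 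Both are resolutions by DG-algebras, so we obtain an isomorphism of graded-commutative algebras $\Tor^S_\bullet(\cO_{\wt Z},\bC)\cong\Tor^{\wt S}_\bullet(\cO_{\wt Z},T)$, the right side carrying the product from the $\wt S$-algebra maps $\wt S\to\cO_{\wt Z}$ and $\wt S\to T$. In particular $\Tor^S_0(\cO_{\wt Z},\bC)=\cO_{\wt Z}/(W^*\cO_{\wt Z})$; since the entries of $fg$ lie in $(W^*)$, the characteristic polynomial $\chi$ reduces to $u^n$ modulo $(W^*)$, and base change for factorization rings gives $\Tor^S_0(\cO_{\wt Z},\bC)=\Fact^{(q,n-q)}_T(u^n)=T\otimes_\bC\Fact^{(q,n-q)}_\bC(u^n)=A\otimes T$, which is the first assertion.

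\emph{The reduction to $\bC[Z]$ and the Koszul computation.} By Proposition~\ref{prop:fact-ring}(2), $\cO_{\wt Z}$ is a free graded $\bC[Z]$-module, so (since $\Tor$ commutes with direct sums, equivalently by derived base change along $\wt S\to\bC[Z]$ with $\cO_{\wt Z}$ flat over $\bC[Z]$) there is a graded-algebra isomorphism $\Tor^{\wt S}_\bullet(\cO_{\wt Z},T)\cong\cO_{\wt Z}\otimes_{\bC[Z]}\Tor^{\wt S}_\bullet(\bC[Z],T)$. By Proposition~\ref{prop:piY-CI}, $\bC[Z]=\wt S/(\text{entries of }h_1g,\ fh_2)$ with these entries a regular sequence, so $\bC[Z]$ is resolved over $\wt S$ by the Koszul DG-algebra $\wt S\otimes_\bC\bigwedge^\bullet\Theta$, with $\Theta$ the span of those entries. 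Identifying $\Theta$ equivariantly is the one place needing care: the $g$-, $h_1$-, $f$-, $h_2$-coordinates transform in $V_1\otimes V_0^*$, $V_0\otimes E^*$, $V_0\otimes V_1^*$, $F\otimes V_0^*$ respectively, so after contracting $V_0$-slots the entries of $h_1g$ span $V_1\otimes E^*$ and those of $fh_2$ span $F\otimes V_1^*$; thus $\Theta=(F\otimes V_1^*)\oplus(V_1\otimes E^*)$, concentrated in bidegree $(1,1)$. Applying $-\otimes_{\wt S}T$ kills the Koszul differential (the defining entries lie in $(W^*)$), so $\Tor^{\wt S}_\bullet(\bC[Z],T)\cong T\otimes_\bC\bigwedge^\bullet\Theta$: the exterior algebra over $\Tor^{\wt S}_0(\bC[Z],T)=T$ on $\Theta$, which is $T$-free with basis $\bigwedge^\bullet\Theta$.

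\emph{Assembling and the grading.} Combining the above with $\cO_{\wt Z}\otimes_{\bC[Z]}T=A\otimes T$ yields $\Tor^S_\bullet(\pi_*\cO_Y,\bC)\cong(A\otimes T)\otimes_\bC\bigwedge^\bullet\Theta$, free over $A\otimes T=\Tor^S_0(\pi_*\cO_Y,\bC)$ with basis $\bigwedge^\bullet\Theta$ and compatibly with multiplication on $\Tor$; restricting to homological degree $i$ shows each $\Tor^S_i$ is $A\otimes T$-free. For the internal grading of $S$: the generators $b_{i,j}$ of $A$ have degree $2j$, so $A=\bigoplus_jA^{2j}$ lies in even degrees; $T$ lies in degree $0$; and $\bigwedge^i\Theta$ lies in degree $i$ since $\Theta$ sits in bidegree $(1,1)$. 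Hence $\Tor^S_i(\pi_*\cO_Y,\bC)_a=0$ unless $a\ge i$ with $a-i$ even, and $\Tor^S_i(\pi_*\cO_Y,\bC)_{2j+i}=A^{2j}\otimes\bigwedge^i\bigl((F\otimes V_1^*)\oplus(V_1\otimes E^*)\bigr)\otimes T$, as claimed.

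\emph{Main obstacle.} Given the results already in hand — $\pi_*\cO_Y=\cO_{\wt Z}$ (Proposition~\ref{prop:fact-ring-vanish}(2)), $Z$ a complete intersection in $\wt W$ (Proposition~\ref{prop:piY-CI}), and $\Fact^{(q,n-q)}_{\bC[Z]}(\chi)$ free over $\bC[Z]$ (Proposition~\ref{prop:fact-ring}(2)) — no step is deep, and the real work is bookkeeping. The two points requiring genuine care are (i) keeping the module structure over $\Tor^S_0=A\otimes T$ honest, which is why I would run every reduction through explicit DG-algebra resolutions rather than on the level of bare $\Tor$-modules, and (ii) tracking the two gradings on $\wt S$ so that $\Theta$ lands in degree $1$ and $A$ in even degrees, which is exactly what produces the ``$a\ge i$, $a-i$ even'' shape of the answer.
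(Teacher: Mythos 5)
Your proof is correct and takes essentially the same route as the paper: both rest on the Koszul resolution of $\bC[Z]$ over $\wt S$ from Proposition~\ref{prop:piY-CI}, the identification $\pi_*\cO_Y = \Fact^{(q,n-q)}_{\bC[Z]}(\chi)$ as a free $\bC[Z]$-module with generator space $A$ (Propositions~\ref{prop:fact-ring-vanish} and~\ref{prop:fact-ring}), the freeness of $\wt S$ over $S$ with $\wt S\otimes_S\bC = T$, and the observation that the Koszul differentials die after killing $W^*$ because the defining entries of $h_1g$ and $fh_2$ sit in bidegree $(1,1)$. The only organizational difference is that you pass through $\Tor^{\wt S}_\bullet(\cO_{\wt Z},T)$ via the auxiliary Koszul resolution $\wt S\otimes\bigwedge^\bullet W^*$ of $T$ over $\wt S$, whereas the paper writes down the minimal $\wt S$-free resolution of $\pi_*\cO_Y$ directly as $A\otimes\wt S\otimes\bigwedge^\bullet\Theta$ and applies $-\otimes_S\bC$; the two arguments are equivalent repackagings.
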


\begin{proof}
(1)  By Propositions~\ref{prop:piY-eqns} and \ref{prop:piY-CI}, the minimal free resolution of the $\wt{S}$-module $\bC[Z]$ is given by the Koszul complex
  \[
    \wt{S} \otimes \bigwedge^\bullet((F \otimes V_1^*) \oplus (V_1 \otimes E^*)).
  \]
  By Proposition~\ref{prop:fact-ring-vanish}, we have an isomorphism of $\wt{S}$-modules
  \[
    \pi_* \cO_Y \cong \Fact^{(q,n-q)}_{\bC[Z]}(\chi).
  \]
  Hence $\pi_*\cO_Y$ is a free $\bC[Z]$-module and its space of minimal generators is identified with $\Fact^{(q,n-q)}_{\bC[Z]}(\chi) \otimes_{\wt{S}} \bC \cong \Fact^{(q,n-q)}_\bC(u^n)$.
  In particular, the minimal free resolution of the $\wt{S}$-module $\pi_*\cO_Y$ is a direct sum of Koszul complexes, which we can write as
  \[
    \wt{S} \otimes A \otimes \bigwedge^\bullet((F \otimes V_1^*) \oplus (V_1 \otimes E^*)).
  \]
  All of the nonzero differentials in this resolution are of bidegree $(1,1)$. Hence they all become 0 when we apply $-\otimes_S \bC$. Since $\tilde{S} \otimes_S \bC = T$, this gives isomorphisms of $T$-modules
  \[
    A \otimes T \otimes \Tor_i^{\wt{S}}(\bC[Z], \bC) \to A \otimes \Tor_i^S(\bC[Z], \bC)
  \]
  which is compatible with the multiplication on Tor and the identification $\Tor_0^S(\bC[Z],\bC) = T$.
  Finally, we have an isomorphism of $\wt{S}$-modules
  \[
    A \otimes \bC[Z] \to \pi_*\cO_Y
  \]
  which gives an isomorphism for all $i\ge 0$ on Tor groups:
  \[
    A \otimes \Tor^{\wt{S}}_i(\bC[Z], \bC) \to \Tor^{\wt{S}}_i(\pi_*\cO_Y, \bC).
  \]
  In particular, each Tor group $\Tor^{\wt{S}}_i(\pi_*\cO_Y, \bC)$ is a free $A$-module and the module structure comes from the natural multiplication on Tor. Combining this with the $T$-module isomorphism above finishes the proof of (1).

  (2) In the proof above, we showed that
  \[
    \Tor_i^S(\pi_* \cO_Y, \bC) = A \otimes \bigwedge^i( (F \otimes V_1^*) \oplus (V_1 \otimes E^*) ) \otimes T,
  \]
  so it remains to determine the graded pieces. Considering the bigrading on $\wt{S}$, $A^{2k}$ has bidegree $(2k,0)$ since the entries of $fg$ all have bidegree $(2,0)$, and this factorization ring is constructed from the characteristic polynomial of $fg$ (see the comments about grading in \S\ref{ss:fact}). The spaces $F \otimes V_1^*$ and $V_1 \otimes E^*$ have bidegree $(1,1)$, and $T$ is a sum of spaces of bidegrees $(0,i)$. When restricting to the $\bZ$-grading on $S$, we forget the second factor of the bidegree, which gives the claimed result.
\end{proof}

\subsection{Proof part 3: algebra upstairs} \label{ss:alg-up}

We can give alternative descriptions of $Y''$, $Y'$, and $Y$ in terms of vector bundles over $X_\bos$. Introduce the following vector bundles over $X_\bos$:
\begin{align*}
  \eta'' &= (F \otimes \cR_0^*) \oplus (\cQ_0 \otimes E^*),\\
  \xi' &= (\cR_0 \otimes \cQ_1^*) \oplus (\cR_1 \otimes \cQ_0^*),\\
  \eta' &= W^* / \xi',\\
  \xi &= (\cQ_1 \otimes E^*) \oplus (F \otimes \cR_1^*).
\end{align*}

\begin{proposition} \label{prop:zero-sections}
  \begin{enumerate}
  \item $Y'' = \Spec_{X_\bos}(\Sym(W^* \oplus \eta''))$ (the subscript denotes taking the relative spectrum over $X_\bos$).
  \item $Y' = \Spec_{X_\bos}(\Sym(\eta' \oplus \eta''))$. In particular, we have an exact sequence (Koszul complex)
    \[
      \cdots \to \bigwedge^k(\xi') \otimes \cO_{Y''} \to \cdots \to \xi' \otimes \cO_{Y''} \to \cO_{Y''} \to \cO_{Y'} \to 0.
    \]
  \item $Y$ is the zero locus of a regular section of $\xi$ over $Y'$. In particular, $Y$ is locally a complete intersection and we have an exact sequence (Koszul complex)
    \[
      \cdots \to \bigwedge^k \xi \otimes \cO_{Y'} \to \cdots \to \xi \otimes \cO_{Y'} \to \cO_{Y'} \to \cO_{Y} \to 0.
    \]
  Furthermore, $Y$ is irreducible and has rational singularities.
  \end{enumerate}
\end{proposition}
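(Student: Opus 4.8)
These are a matter of unwinding definitions. Inside the trivial bundle $X_\bos\times W^\re$, the conditions cutting out $Y''$ are fibrewise linear of locally constant rank: $h_1(R_0)=0$ says $h_1$ factors through $\cQ_0$, and $h_2(F)\subseteq R_0$ says $h_2$ lands in $\cR_0$. So $Y''$ is a subbundle whose fibre over $(R_0,R_1)$ is $W\oplus\cHom(F,\cR_0)\oplus\cHom(\cQ_0,E)$, i.e.\ $Y''=\Spec_{X_\bos}\Sym(W^*\oplus\eta'')$. For (2), the further conditions $f(R_0)\subseteq R_1$ and $g(R_1)\subseteq R_0$ say exactly that $(f,g)$ dies under the surjection of bundles $W\twoheadrightarrow\cHom(\cR_0,\cQ_1)\oplus\cHom(\cR_1,\cQ_0)=(\xi')^\vee$ (dualize $0\to\xi'\to W^*\to\eta'\to0$); its kernel is $(\eta')^\vee\subseteq W$, so $Y'=\Spec_{X_\bos}\Sym(\eta'\oplus\eta'')$, and since this surjection is fibrewise split the resulting tautological section of $(\xi')^\vee$ over $Y''$ is Koszul-regular, giving the stated resolution of $\cO_{Y'}$ by $\bigwedge^\bullet\xi'\otimes\cO_{Y''}$.

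\textbf{Part (3): realizing $Y$ as a zero locus.} Over $Y'$ one has $g(R_1)\subseteq R_0\subseteq\ker h_1$ and $h_2(F)\subseteq R_0\subseteq f^{-1}(R_1)$; these force $h_1g\colon V_1\to E$ to kill $R_1$ (hence factor through $\cQ_1$) and $fh_2\colon F\to V_1$ to have image in $R_1$. Thus $(h_1g,fh_2)$ defines a section $\sigma$ of $\xi^\vee=\cHom(\cQ_1,E)\oplus\cHom(F,\cR_1)$ over $Y'$, with $Y=\sigma^{-1}(0)$ (equivalently, a section of $\xi$ in the language of the statement, the relevant Koszul complex being $\bigwedge^\bullet\xi\otimes\cO_{Y'}$). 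As $Y'$ is a vector bundle over the smooth $X_\bos$, it is smooth, so once $\sigma$ is shown to be a regular section it will follow that $Y$ is a local complete intersection of codimension $\rank\xi$ in $Y'$ with $\cO_Y$ resolved by that Koszul complex.

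\textbf{Part (3): irreducibility and regularity.} The forgetful map $\rho'\colon\cY\to Y$ remembering only the members $R_{0,p}$ and $R_{1,q}$ of the flags is well defined, since the conditions defining $\cY$ imply those of $Y$. It is also surjective: given a point of $Y$, one builds the required flags greedily — choosing $R_{1,\bullet}$ to be a complete flag of $fg$-invariant subspaces refining $0\subset R_1\subset V_1$, and then $R_{0,p-q+i}$ inside $f^{-1}(R_{1,i})\cap\ker h_1$ containing $R_{0,p-q+i-1}+g(R_{1,i})$ — where at each step the relevant dimension bound comes from $f(R_0)\subseteq R_1$, from $h_1g=fh_2=0$, and above all from \eqref{eqn:ineq}, exactly as in the proof of Proposition~\ref{prop:piY-eqns}. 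Since $\cY$ is a vector bundle over a product of flag varieties it is irreducible, hence so is $Y$. Now $\pi$ restricts to a surjection $Y\to\wt Z$ onto the irreducible variety $\wt Z$ (Proposition~\ref{prop:piY-eqns}), and by the fibre computation in the proof of Proposition~\ref{prop:fact-ring-vanish}(2) the generic fibre is a Grassmannian of dimension $d=(p-q-\dim F)(m-n-\dim E-\dim F)$, so $\dim Y=\dim\wt Z+d=\dim Z+d$; a direct rank count (using $\rank\eta'$, $\rank\eta''$, $\rank\xi$ and \eqref{eqn:ineq}) identifies this with $\dim Y'-\rank\xi$. Combined with Krull's bound $\codim(Y,Y')\le\rank\xi$, this gives $\codim(Y,Y')=\rank\xi$, so $\sigma$ is regular and $Y$ is a complete intersection, in particular Cohen--Macaulay.

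\textbf{Part (3): rational singularities.} This is the crux, and the plan is to transport the argument used for $\wt\cZ$ in ``Proof part~2'' over to $Y$ along $\pi\colon Y\to Z$: pull back the closed sets $D_0,\dots,D_6$; over $U_2(Y)\cup U_5(Y)$ the map $\pi$ is a Grassmannian bundle onto an open subset of $\wt Z$ (using the \'etale-ness of $U_2(\wt Z)\to U_2$ and, for $U_5$, the analysis behind Lemma~\ref{lem:codim-lemma}(2)), so $Y$ has rational singularities there because $\wt Z$ does by Proposition~\ref{prop:fact-ring-vanish}(1); then show, mimicking Lemma~\ref{lem:codim-lemma} and Corollary~\ref{cor:normal-form}, that the complement of $U_2(Y)\cup U_5(Y)$ has codimension $\ge2$ in $Y$; finally invoke \cite[Proposition 4.1]{superres}, which applies since $Y$ is a local complete intersection. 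The main obstacle is precisely the codimension-$\ge2$ estimate — propagating the degeneracy-locus bounds of part~2 across the fibration $\pi$ — which is where \eqref{eqn:ineq} is genuinely needed; the remainder of (3), like all of (1)--(2), is formal manipulation with tautological sequences and Koszul complexes.
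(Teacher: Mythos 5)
Parts (1) and (2) match the paper's argument. For part (3) you correctly identify the section of $\xi^*$ cutting out $Y$ in $Y'$, and your irreducibility argument (surjectivity of $\cY\to Y$) and your dimension count $\dim Y = \dim Y' - \rank\xi$ (which I checked does balance, though $\dim\Gr(k,N)=k(N-k)$ so $d=(p-q-\dim F)(m-n-\dim E-p+q)$, not what you wrote) are workable.

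However, the rational singularities step is not a proof — it is an acknowledged plan. You write that ``the main obstacle is precisely the codimension-$\ge 2$ estimate'' and leave it there. That estimate is not a formal transport: Lemma~\ref{lem:codim-lemma} bounds the $D_i$ loci inside $\cY$, which is a vector bundle over a flag variety, and the argument there uses the linear structure of $\cY$ explicitly (e.g.\ picking out free variables like the entries of $h_2'$ to cut codimension). The space $Y$ is cut out by non-linear equations and is not a bundle, so the same bookkeeping does not carry over without new work, and your sketch does not supply it. The \'etale-ness over $U_2$ and the isomorphism over $U_5$ also live over $\wt Z$ or $\wt\cZ$, not over $Y$, so ``$Y$ is a Grassmannian bundle over an open set of $\wt Z$'' needs its own argument.

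More to the point, you have missed the observation that makes the paper's proof short. After trivializing the tautological bundles over an affine chart of $X_\bos$, the section of $\xi^*$ over $Y'$ is literally the pair of compositions $\cQ_1\xrightarrow{\bar g}\cQ_0\xrightarrow{h_1}E$ and $F\xrightarrow{h_2}\cR_0\xrightarrow{\bar f}\cR_1$, whose entries involve disjoint coordinates; all remaining coordinates of $Y'$ are free. So $Y$ is locally a product of a smooth chart of $X_\bos$, an affine space, and two varieties of complexes, and the inequality \eqref{eqn:ineq} rewrites exactly as $\rank\cQ_1+\dim E\le\rank\cQ_0$ and $\dim F+\rank\cR_1\le\rank\cR_0$, which is the hypothesis of Corollary~\ref{cor:complex-CI}. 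That corollary then gives regularity of the section, reducedness, irreducibility, the complete intersection property, and rational singularities simultaneously, with none of the global machinery ($\pi\colon Y\to\wt Z$, dimension counts, degeneracy loci) you invoke. Your route, even if completed, would be far longer than necessary and depends on a propagation argument you have not carried out.
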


\begin{proof}
  (1) From the definition, $Y''$ is the total space of the vector bundle $W \times \hom(\cQ_0, E) \times \hom(F, \cR_0)$ over $X_\bos$.

  (2) The conditions $f(R_0) \subseteq R_1$ and $g(R_1) \subseteq R_0$ are locally linear conditions on $Y''$ which are given by the vanishing of a section of $\hom(\cR_0,\cQ_1) \times \hom(\cR_1,\cQ_0) = (\xi')^*$.
  
  (3) Over $Y'$, we have $h_1(\cR_0) = 0$ and $g(\cR_1) \subseteq \cR_0$, and hence have a composition $\cQ_1 \xrightarrow{g} \cQ_0 \xrightarrow{h_1} E$. Similarly, we also get a composition $F \xrightarrow{h_2} \cR_0 \xrightarrow{f} \cR_1$. By definition, $Y$ is the reduced scheme of the zero locus of this pair of sections. Note that \eqref{eqn:ineq} can be rewritten as
  \[
    \dim F + \rank \cR_1 \le \rank \cR_0, \qquad \rank \cQ_1 + \dim E \le \dim \cQ_0.
  \]
  Finally, we wish to use Corollary~\ref{cor:complex-CI}, but it does not apply literally. To do so, it suffices to pick an affine chart where all vector bundles involved are trivialized. Then we can conclude that this zero section is regular, is already reduced, and that $Y$ is irreducible and has rational singularities (these are all local properties). 
\end{proof}

\begin{proposition} \label{prop:tor-cohom}
  We have
  \[
    \Tor^S_i(\pi_* \cO_Y, \bC) \cong \bigoplus_j \rH^j(X_{\bos}; \bigwedge^{i+j}(\xi \oplus \xi') \otimes \Sym(\eta'')).
  \]
  Furthermore, this isomorphism takes the multiplication on Tor on the left side to the induced multiplication on the right side coming from the exterior algebra on $\xi \oplus \xi'$.
\end{proposition}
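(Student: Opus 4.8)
The plan is to run the geometric technique of Lascoux and Weyman (see \cite[Chapter 5]{weyman}, and the closely analogous arguments in \cite{superres}): resolve $\cO_Y$ by the Koszul complexes of Proposition~\ref{prop:zero-sections}, push everything down to the proper base $X_\bos$, and then tensor with $\bC$ over $S$ to read off the $\Tor$ groups.

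First I would set up the maps. Let $q \colon Y \to X_\bos$ be the structure morphism; it is affine since, by Proposition~\ref{prop:zero-sections}, $Y$ is a closed subscheme of the vector bundle $Y' = \Spec_{X_\bos}(\Sym(\eta' \oplus \eta''))$ over $X_\bos$. Let $\phi \colon Y \to W = \Spec S$ be the composite of $\pi \colon Y \to \wt W$ with the affine projection $\wt W \to W$, so that $\phi_* \cO_Y = \pi_* \cO_Y$ as $S$-modules. Because $\pi$ has no higher direct images (Proposition~\ref{prop:fact-ring-vanish}(2)) and $\wt W \to W$ is affine, $\rR\phi_* \cO_Y$ is concentrated in degree $0$, equal to $\pi_* \cO_Y$. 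Computing $\rR\Gamma(Y; \cO_Y)$ in two ways---through $X_\bos$ (where $q$ is affine) and through $W$ (which is affine)---then yields $\rR\Gamma(X_\bos; q_* \cO_Y) = \pi_* \cO_Y$; in particular $\rH^{>0}(X_\bos; q_* \cO_Y) = 0$.

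Next I would splice the Koszul resolutions of Proposition~\ref{prop:zero-sections}(2) and~(3) over $Y''$ and push forward along the affine maps $Y \to Y' \to Y'' \to X_\bos$; using that the pushforward of $\cO_{Y''}$ to $X_\bos$ is $S \otimes_\bC \Sym(\eta'')$ (Proposition~\ref{prop:zero-sections}(1)), this exhibits a resolution of $q_* \cO_Y$ by the total complex $\cK_\bullet$ of the Koszul bicomplex
\[
  S \otimes_\bC \bigwedge\nolimits^\bullet \xi \otimes_{\cO_{X_\bos}} \bigwedge\nolimits^\bullet \xi' \otimes_{\cO_{X_\bos}} \Sym(\eta''),
\]
each term of which is a free $S$-module. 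Now $\Tor^S_\bullet(\pi_* \cO_Y, \bC)$ is the homology of $\pi_* \cO_Y \otimes^{\mathrm L}_S \bC$; writing $\pi_* \cO_Y = \rR\Gamma(X_\bos; q_* \cO_Y)$ and using that $\rR\Gamma(X_\bos; -)$ commutes with tensoring by a bounded complex of free $S$-modules (the finite Koszul resolution of $\bC$ over $S$), this equals $\rR\Gamma(X_\bos; q_* \cO_Y \otimes^{\mathrm L}_S \bC) = \rR\Gamma(X_\bos; \cK_\bullet \otimes_S \bC)$. The crucial point is that every differential in $\cK_\bullet$ has coefficients in the ideal $S_+$: the $\xi'$-Koszul differential is linear in the $W$-coordinates $f, g$, and the $\xi$-Koszul differential involves the entries of the compositions $h_1 g$ and $f h_2$, which lie in $S_+ \cdot \Sym(\eta'')$. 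Hence $\cK_\bullet \otimes_S \bC = \bigoplus_i \bigwedge^i(\xi \oplus \xi') \otimes \Sym(\eta'')$ with zero differential, the $i$-th summand sitting in homological degree $i$; taking hypercohomology and reindexing by the cohomological degree $j$ on $X_\bos$ gives
\[
  \Tor^S_i(\pi_* \cO_Y, \bC) = \bigoplus_j \rH^j\!\left(X_\bos; \bigwedge\nolimits^{\,i+j}(\xi \oplus \xi') \otimes \Sym(\eta'')\right),
\]
as claimed. For the multiplicativity, one observes that $\cK_\bullet$ is a commutative DG $S$-algebra (a tensor product of Koszul complexes) resolving the $S$-algebra $\pi_* \cO_Y$, so it computes $\Tor^S_\bullet(\pi_* \cO_Y, \bC)$ as a ring; after $\otimes_S \bC$ the product is visibly the exterior-algebra product on $\bigwedge^\bullet(\xi \oplus \xi')$ (tensored with the symmetric product on $\Sym(\eta'')$), which induces the asserted product on cohomology.

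The step I expect to be the main obstacle is the homological bookkeeping: carefully justifying that $\rR\Gamma(X_\bos; -)$ commutes with $-\otimes^{\mathrm L}_S \bC$ and with the iterated Koszul resolution---the terms $\cK_i$ are infinite direct sums of coherent sheaves, so this rests on the finiteness of the Koszul resolution of $\bC$ over $S$ rather than on any acyclicity---and verifying that all differentials of $\cK_\bullet$ genuinely reduce to $0$ modulo $S_+$. Equivalently, the content is the assertion that the minimal free $S$-resolution of $\pi_* \cO_Y$ has exactly the shape produced by the geometric technique, which is where the internal grading (the $W$-variables in positive degree, the $\eta''$-variables in degree $0$) is used. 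The remaining ingredients are the formal apparatus of \cite[Chapter 5]{weyman}.
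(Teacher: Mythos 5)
Your proof is correct and follows the same geometric-technique strategy as the paper: resolve $\cO_Y$ by the double Koszul complex of Proposition~\ref{prop:zero-sections}, push down, tensor with $\bC$, and use the vanishing of higher direct images from Proposition~\ref{prop:fact-ring-vanish}(2) together with the vanishing of the Koszul differentials modulo $S_+$. The only cosmetic difference is that the paper packages the commutation of $\rR\Gamma$ with $-\otimes^{\rL}_S\bC$ as an instance of cohomology and base change (\stacks{08IB}) for the Tor-independent cartesian square of $Y'' \to W \leftarrow \Spec(\bC)$, whereas you argue it directly via the finiteness of the Koszul resolution of $\bC$ over $S$ and the fact that your total complex $\cK_\bullet$ has $S$-flat terms --- these are equivalent, and your one-step construction of the total complex is arguably slightly cleaner than the paper's two-stage replacement.
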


\begin{proof}
  Consider the cartesian diagram
  \[
    \xymatrix{ \Spec_{X_\bos}(\Sym(\eta'')) \ar[r]^-{\psi'} \ar[d]_-{\phi'} & Y'' \ar[d]^-\phi \\
      \Spec(\bC) \ar[r]^\psi & W }
  \]
  where $\phi$ is the projection map and $\psi$ is inclusion into the origin, i.e., dual to the map of rings $S \to S/(W^*)$. Note that $\cO_{Y''}$ is free over $\cO_W$, and so $Y''$ and $\Spec(\bC)$ are Tor independent. So we can apply cohomology and base change \stacks{08IB}, which gives an isomorphism
  \[
    \rL \psi^*( \rR \phi_* \cO_Y) = \rR \phi'_*( \rL (\psi')^* \cO_Y).
  \]
  The functors on both sides are monoidal, so this isomorphism will also be compatible with the algebra structures on both sides, which addresses the second part of the proposition once the first part has been established.
  
  Now we take $i$th homology of both sides. The left side gives $\Tor_i^S(\rR \phi_*\cO_Y, \bC)$. To compute the right side, we first use Proposition~\ref{prop:zero-sections} to replace $\cO_Y$ with the Koszul complex $\cO_{Y'} \otimes \bigwedge^\bullet \xi$. The differentials of $\rL(\psi')^*( \cO_{Y'} \otimes \bigwedge^\bullet \xi)$ all vanish since the section of $\xi^*$ used to define this Koszul complex is in the ideal of $\cO_{Y'}$ generated by $\eta'$. Hence we have a direct sum 
  \[
    \rL(\psi')^* \cO_Y \cong \bigoplus_k \rL(\psi')^* (\cO_{Y'} \otimes \bigwedge^k \xi)[-k].
  \]
  To describe the terms in the sum, we again use Proposition~\ref{prop:zero-sections} to replace each $\cO_{Y'} \otimes \bigwedge^k \xi$ by $\cO_{Y''} \otimes \bigwedge^k \xi \otimes \bigwedge^\bullet \xi'$. Now we do not need to derive $(\psi')^*$, and again, the differentials are 0 after applying $(\psi')^*$ since the section of $(\xi')^*$ used to define this Koszul complex is in the ideal of $\cO_{Y''}$ generated by $W^*$.
  Hence the $i$th homology of the right side is
  \[
    \bigoplus_{k,j} \rH^{k+j-i}(\Spec_{X_\bos}(\Sym(\eta'')); \bigwedge^k \xi \otimes \bigwedge^j \xi') = \bigoplus_{k,j} \rH^{k+j-i}(X_\bos; \Sym(\eta'') \otimes \bigwedge^k \xi \otimes \bigwedge^j \xi').
  \]
  Finally, $\rR \pi_*\cO_Y = \rR \phi_* \cO_Y$ as $S$-modules, and Proposition~\ref{prop:fact-ring-vanish} shows that the higher direct images of $\pi_* \cO_Y$ vanish, so we are done.
\end{proof}

\subsection{Finishing the proof}

We choose vector spaces $E$ and $F$ so that $\dim E = \ell(\alpha)$ and $\dim F = \ell(\beta)$ and continue to use the above notation.

By Corollary~\ref{cor:TorwtS}, we get
\[
  \Tor^S_i(\pi_* \cO_Y, \bC)_{2j+i} = \rH^{2j}_{\rm sing}(\Gr_q(V_1); \bC) \otimes \bigwedge^i((F \otimes V_1^*) \oplus (V_1 \otimes E^*)) \otimes T.
\]
But Proposition~\ref{prop:tor-cohom} also gives
\begin{align*}
  \Tor^S_i(\pi_* \cO_Y, \bC)_{2j+i} &=  \rH^{2j}(X_{\bos}; \bigwedge^{2j+i}(\xi \oplus \xi') \otimes \Sym(\eta'')).
\end{align*}

Now combine these two isomorphisms, and sum over $i$ to get an isomorphism:
\begin{align} \label{eqn:compare1}
  \rH^{2j}_{\rm sing}(\Gr_q(V_1); \bC) \otimes \bigwedge^\bullet((F \otimes V_1^*) \oplus (V_1 \otimes E^*)) \otimes T  = \rH^{2j}(X_{\bos}; \bigwedge^{\bullet}(\xi \oplus \xi') \otimes \Sym(\eta'')),
\end{align}
which is an algebra isomorphism if we sum over all $j$.

Taking $\GL(E) \times \GL(F)$-invariants on the left side gives $\rH^\bullet_{\rm sing}(\Gr_q(V_1); \bC)$, while taking $\GL(E) \times \GL(F)$-invariants on the right side gives $\rH^\bullet(X_\bos; \bigwedge^\bullet \xi')$. Hence by Corollary~\ref{cor:TorwtS}, the right side is a free module over $\rH^\bullet(X_\bos; \bigwedge^\bullet \xi')$. More specifically, by equivariance, each $\GL(E) \times \GL(F)$-isotypic component of the right side is a submodule which is also free.

Finally, we take the $\bS_\alpha(E^*) \otimes \bS_\beta(F)$-isotypic component of the above isomorphism with respect to the action of $\GL(E)\times \GL(F)$. We explain this in steps. First, by the Cauchy identity, we have
\begin{align*}
  \bigwedge^\bullet \xi \otimes \Sym(\eta'') &= \bigwedge^\bullet(\cQ_1 \otimes E^*) \otimes \bigwedge^\bullet(F \otimes \cR_1^*) \otimes \Sym(F \otimes \cR_0^*) \otimes \Sym(\cQ_0 \otimes E^*)\\
                                             &= \Sym( (\cQ_0|\cQ_1) \otimes E^*) \otimes \Sym(F \otimes (\cR_0^*|\cR_1^*))\\
  &= \left(\bigoplus_\alpha \bS_\alpha(\cQ_0|\cQ_1) \otimes \bS_\alpha(E^*) \right) \otimes \left( \bigoplus_\beta \bS_\beta(F) \otimes \bS_\beta(\cR_0^*|\cR_1^*) \right).
\end{align*}
In particular, the right hand side of \eqref{eqn:compare1} can be written as
\[
  \bigoplus_{\alpha, \beta} \rH^{2j}(X_{\bos}; \bigwedge^\bullet\xi' \otimes  \bS_\alpha(\cQ_0|\cQ_1) \otimes \bS_\beta(\cR_0^*|\cR_1^*) ) \otimes \bS_\alpha(E^*) \otimes \bS_\beta(F).
\]
On the other hand, by \eqref{eqn:T-decomp} and Proposition~\ref{prop:rational-schur-coord}, we have
\begin{align*}
  \bigwedge^\bullet((F \otimes V_1^*) \oplus (V_1 \otimes E^*)) \otimes T &= \bigwedge^\bullet((F \otimes V_1^*) \oplus (V_1 \otimes E^*)) \otimes \bigoplus_{\lambda, \mu} \bS_\lambda (E^*) \otimes \bS_{[\lambda; \mu]}(V_0) \otimes \bS_\mu F\\
  &= \bigoplus_{\alpha, \beta} \bS_\alpha (E^*) \otimes \bS_{[\alpha; \beta]}(V) \otimes \bS_\beta F.
\end{align*}
Comparing these two decompositions gives a $\GL(V_0) \times \GL(V_1)$-equivariant isomorphism
  \begin{align*}
  \rH^{2j}_{\rm sing}(\Gr_q(V_1); \bC) \otimes \bS_{[\alpha;\beta]}(V)  = \rH^{2j}(X_{\bos}; \bigwedge^{\bullet}\xi' \otimes \bS_\alpha(\cQ_0|\cQ_1) \otimes \bS_\beta(\cR_0^*|\cR_1^*)).
  \end{align*}
  As explained above, if we sum over all $j$, then the above space is a free module over $\rH^\bullet(X_\bos; \bigwedge^\bullet \xi')$.

  Finally, with respect to the $\cJ$-adic filtration on $\bS_\alpha(\cQ) \otimes \bS_\beta(\cR^*)$, we have 
  \[
    \gr(\bS_\alpha\cQ \otimes \bS_\beta(\cR^*)) \cong \bS_\alpha(\cQ_0|\cQ_1) \otimes \bS_\beta(\cR_0^*|\cR_1^*) \otimes \bigwedge^\bullet \xi',
  \]
  and hence we have an $\rE_1$ spectral sequence
  \[
    \rE^{p,q}_1 = \rH^{p+q}(X_{\bos}; \bS_\alpha(\cQ_0|\cQ_1) \otimes \bS_\beta(\cR_0^*|\cR_1^*) \otimes \bigwedge^p \xi') \Longrightarrow \rH^{p+q}(X; \bS_\alpha\cQ \otimes \bS_\beta (\cR^*)).
  \]
  Since all cohomology is concentrated in even degree, this spectral sequence is degenerate, so we conclude that we have a $\GL(V_0) \times \GL(V_1)$-equivariant isomorphism
  \begin{align*}
    \rH^{2j}(X; \bS_\alpha\cQ \otimes \bS_\beta (\cR^*)) &= \rH^{2j}(X_{\bos}; \bS_\alpha(\cQ_0|\cQ_1) \otimes \bS_\beta(\cR_0^*|\cR_1^*) \otimes \bigwedge^\bullet \xi') \\
    &= \rH^{2j}_{\rm sing}(\Gr_q(V_1); \bC) \otimes \bS_{[\alpha;\beta]}(V).
  \end{align*}
From the spectral sequence, the $\rH^\bullet(X;\cO_X)$-module $\rH^\bullet(X; \bS_\alpha \cQ \otimes \bS_\beta (\cR^*))$ has a filtration, and the $\rH^\bullet(X;\cO_X)$-module structure on the associated graded space coincides with the action of $\rH^\bullet(X_\bos; \bigwedge^\bullet \xi')$ by the above discussion. Furthermore, the associated graded module is free and generated in degree 0, and hence the $\rH^\bullet(X;\cO_X)$-module $\rH^\bullet(X; \bS_\alpha \cQ \otimes \bS_\beta(\cR^*))$ is also free and generated in degree $0$.
  
Next, from \cite[Theorem 1.2(a)]{superres}, we have $\rH^\bullet(X; \cO_X) \cong \rH^\bullet_{\rm sing}(\Gr_p(V_1); \bC)$ which is a trivial representation of $\GL(V_0) \times \GL(V_1)$. By Theorem~\ref{thm:schur-irred}, $\bS_{[\alpha;\beta]}(V)$ is an irreducible $\GL(V)$-representation. Since the characters of the irreducible representations are linearly independent, we conclude that the isomorphism above must also be $\GL(V)$-equivariant.

Finally, we prove the statement that the canonical surjection \eqref{eqn:schur-surj}
\[
  (\bS_\alpha V \otimes \bS_\beta (V^*)) \times X \to \bS_\alpha \cQ \otimes \bS_\beta(\cR^*) \to 0
\]
induces surjections on cohomology. Taking the sum over all cohomology, both sides are free $\rH^*(X;\cO_X)$-modules generated in degree $0$, so it suffices to show that we get a surjection on sections. Also, the map is $\GL(V)$-equivariant, so by irreducibility of $\bS_{[\alpha;\beta]}(V)$, it actually suffices to show that the map is nonzero on sections. For that, we consider the reduction modulo $\cJ$, and then we can use Corollary~\ref{cor:rat-schur-surj}.

  \section{Generalizations} \label{sec:misc}

  First, everything can be extended to the setting where $V$ is replaced by a superbundle $\cV$ on a superscheme $X$. Then we can instead use the relative super Grassmannian $\Gr(p|q,\cV)$ (see \cite[Definition 2.18]{FAS} for details and references on this construction), which is a superscheme equipped with a structure map $\pi \colon \Gr(p|q,\cV) \to X$ whose fibers are isomorphic to $\Gr(p|q,V)$. This is again equipped with a tautological short exact sequence
  \[
    0 \to \cR \to \pi^* \cV \to \cQ \to 0.
  \]
  As before, for any partitions $\alpha, \beta$, we have a canonical surjection
  \begin{align} \label{eqn:schur-surj2}
    \pi^*( \bS_\alpha \cV \otimes  \bS_\beta(\cV^*)) \to \bS_\alpha \cQ \otimes \bS_\beta(\cR^*) \to 0.
  \end{align}
  
  We note that the definition of $\bS_{[\alpha;\beta]}(\cV)$ continues to make sense and we get the following generalization:

  \begin{theorem} \label{thm:relative}
    Let $\cV$ be a super vector bundle of rank $m|n$ over a superscheme $X$.
  Let $\alpha, \beta$ be partitions and assume that one of the following conditions holds:
  \begin{enumerate}
  \item $m-n-\ell(\alpha) \ge p -q \ge \ell(\beta)$, or
    
  \item $n-m -\alpha_1\ge  q-p \ge \beta_1$.
  \end{enumerate}

  Let $\cA = \bigoplus_k \rR^k \pi_*(\cO_{\Gr(p|q,\cV)})$. The surjection \eqref{eqn:schur-surj2} induces a surjection on $\rR^k \pi_*$ for each $k$, and induces an isomorphism of graded $\cA$-modules
  \[
    \bigoplus_k \rR^k \pi_* ( \bS_\alpha \cQ \otimes \bS_\beta (\cR^*)) \cong \cA \otimes_{\cO_X} \bS_{[\alpha; \beta]}(\cV),
  \]
  where the right side is a free $\cA$-module generated by $\bS_{[\alpha;\beta]}(\cV)$, and the grading induced by $\cA$ coincides with the cohomological grading on the left side. The isomorphism is natural with respect to automorphisms of $\cV$.
\end{theorem}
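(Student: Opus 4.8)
The plan is to deduce Theorem~\ref{thm:relative} from Theorem~\ref{thm:main-Q} by making the comparison isomorphism manifestly functorial and then working locally on $X$. The canonical surjection \eqref{eqn:schur-surj2}, combined with the projection formula
\[
  \rR\pi_*\pi^*\bigl(\bS_\alpha\cV\otimes\bS_\beta(\cV^*)\bigr)\cong\bigl(\bS_\alpha\cV\otimes\bS_\beta(\cV^*)\bigr)\otimes_{\cO_X}\cA
\]
(valid since $\bS_\alpha\cV\otimes\bS_\beta(\cV^*)$ is locally free; see the remark on superschemes below), produces a canonical morphism of graded $\cA$-modules
\[
  \Phi\colon\cA\otimes_{\cO_X}\bigl(\bS_\alpha\cV\otimes\bS_\beta(\cV^*)\bigr)\longrightarrow\bigoplus_k\rR^k\pi_*\bigl(\bS_\alpha\cQ\otimes\bS_\beta(\cR^*)\bigr),
\]
which is natural with respect to isomorphisms of $\cV$. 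Everything to be proved --- that $\Phi$ kills $\cA\otimes_{\cO_X}\ker\bigl(\bS_\alpha\cV\otimes\bS_\beta(\cV^*)\to\bS_{[\alpha;\beta]}(\cV)\bigr)$ and hence factors through a morphism $\overline\Phi\colon\cA\otimes_{\cO_X}\bS_{[\alpha;\beta]}(\cV)\to\bigoplus_k\rR^k\pi_*(\bS_\alpha\cQ\otimes\bS_\beta(\cR^*))$, that $\overline\Phi$ is an isomorphism, and that $\Phi$ is surjective on each $\rR^k\pi_*$ --- is local on $X$. Because the construction of $\Phi$ is functorial, the local statements glue automatically, so it suffices to treat the case $\cV=\cO_X^{m|n}$. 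Naturality with respect to automorphisms of $\cV$ is then automatic, being built into $\Phi$.

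For free $\cV$ over an arbitrary base $X$ I would re-run the proof of Theorem~\ref{thm:main-Q} relative to $X$. That argument factors through three kinds of input, each of which relativizes: the $\cJ$-adic filtration and its spectral sequence; the commutative-algebra results of Section~\ref{sec:CI} on complete intersections and the splitting/factorization-ring material of \S\ref{ss:fact}, all of which were stated over an arbitrary base scheme; and the representation theory of $\GL(m|n)$, which is insensitive to $X$. Concretely, for $\cV=\cO_X^{m|n}$ the relative super Grassmannian has bosonic reduction $X\times G$ with $G=\Gr(p,V_0)\times\Gr(q,V_1)$, and $\gr(\bS_\alpha\cQ\otimes\bS_\beta(\cR^*))$ is pulled back from $G$ as in \S\ref{ss:supergeom}; flat base change along $X\times G\to G$ then identifies $\rR^k\pi_{\bos,*}$ of these sheaves with $\cO_X\otimes_\bC$ their absolute cohomology. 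This is also how I would legitimize the projection formula and the cohomology-and-base-change steps on superschemes: perform all base change on the ordinary schemes $X\times G$ and $G$, and reassemble along the $\cJ$-adic spectral sequence. Consequently the relative $\cJ$-adic spectral sequence for $\pi$ is $\cO_X\otimes_\bC$ the absolute one, it degenerates for the same parity reason (everything is concentrated in even degree), and one reads off the conclusions of Theorem~\ref{thm:main-Q} base-changed to $X$: $\cA$ is locally free of the form $\cO_X\otimes_\bC\rH^\bullet_{\rm sing}(\Gr_q(\bC^n);\bC)$, and $\bigoplus_k\rR^k\pi_*(\bS_\alpha\cQ\otimes\bS_\beta(\cR^*))$ is a free $\cA$-module on $\bS_{[\alpha;\beta]}(\cO_X^{m|n})$.

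It remains to check in this free case that $\Phi$ factors through $\bS_{[\alpha;\beta]}(\cV)$, that $\overline\Phi$ is an isomorphism, and that $\Phi$ is surjective on $\rR^k\pi_*$; all three are inherited from Theorem~\ref{thm:main-Q} by comparing the two descriptions above, surjectivity being reduced modulo $\cJ$ and then handled by Corollary~\ref{cor:rat-schur-surj}, which concerns ordinary Grassmannians and relativizes at once. The step I expect to require the most care is precisely this base-change bookkeeping --- ensuring that the relative $\cJ$-adic spectral sequence genuinely is the absolute one tensored up and that no subtlety of cohomology and base change for sheaves of superalgebras is being swept under the rug; doing all of the base change on the bosonic reductions, where $\pi_\bos$ is a product projection of ordinary schemes, is the cleanest way to dispatch it. Once that is in place the remainder is formal, since the functoriality of $\Phi$ does all the work of gluing and of promoting the $\GL(m|n)$-equivariance of Theorem~\ref{thm:main-Q} to the asserted naturality in $\cV$.
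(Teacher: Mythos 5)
Your proposal is correct and follows essentially the same route as the paper: build the canonical map from the surjection \eqref{eqn:schur-surj2} together with the projection formula, then reduce to a local/stalk computation and invoke Theorem~\ref{thm:main-Q} via base change, with the kernel of $\bS_\alpha\cV\otimes\bS_\beta(\cV^*)\to\bS_{[\alpha;\beta]}(\cV)$ playing the same role as the paper's precomposition with $\bS_{\alpha/1}\cV\otimes\bS_{\beta/1}(\cV^*)$. One remark: for free $\cV$ you do not need to ``re-run the proof of Theorem~\ref{thm:main-Q} relative to $X$'' --- since $\Gr(p|q,\cO_X^{m|n})=\Gr(p|q,\bC^{m|n})\times_{\bC}X$, flat base change from $\Spec\bC$ directly gives $\rR^k\pi_*\cong\cO_X\otimes_{\bC}\rH^k(\Gr(p|q,\bC^{m|n});-)$ for sheaves pulled back from the absolute super Grassmannian, and everything follows at once; this is the more economical ``base change argument'' the paper's proof alludes to.
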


\begin{proof}
  From \eqref{eqn:schur-surj2} and the projection formula, we get maps
  \addtocounter{equation}{-1}
  \begin{subequations}
\begin{align} \label{eqn:gr-eqn1}
    \cA^k \otimes \bS_\alpha \cV \otimes \bS_\beta (\cV^*) \to \rR^k \pi_* (\bS_\alpha \cQ \otimes \bS_\beta(\cR^*)).
\end{align}
Furthermore, we have a map $\bS_{\alpha/1} \cV \otimes \bS_{\beta/1} (\cV^*) \to \bS_\alpha \cV \otimes \bS_\beta (\cV^*)$ and hence a composition
\begin{align} \label{eqn:gr-eqn2}
    \cA^k \otimes \bS_{\alpha/1} \cV \otimes \bS_{\beta/1}(\cV^*) \to \rR^k \pi_* (\bS_\alpha \cQ \otimes \bS_\beta(\cR^*)).
\end{align}
Then \eqref{eqn:gr-eqn1} is surjective and \eqref{eqn:gr-eqn2} is 0 since we can check these properties at each stalk over $X$, and then it reduces to Theorem~\ref{thm:main-Q} (while everything is done over $\bC$, we can extend the results to an affine scheme over $\bC$ by a base change argument).

Thus, we get an induced map
\[
  \cA^k \otimes \bS_{[\alpha;\beta]} \cV \to \rR^k \pi_* (\bS_\alpha \cQ \otimes \bS_\beta(\cR^*))
\]
which is an isomorphism (again by checking at stalks). The compatibility with the $\cA$-module structure can be checked in a similar way by reducing to checking at stalks.
\end{subequations}
\end{proof}

This allows us to extend the main theorem to super partial flag varieties.

Let $V$ be a super vector space of dimension $m|n$. Let $p_1|q_1 <\cdots < p_r|q_r <m|n$ be an increasing sequence of superdimensions, let $\bF = \Fl(\bp|\bq, V)$ be the corresponding super partial flag variety, and let $\cR_1 \subset \cR_2 \subset \cdots \subset \cR_r$ be the tautological flag of subbundles of $V\times \bF$. Also let $\cQ_i = V/\cR_i$ be the corresponding quotient.

Before stating the generalization of Theorem~\ref{thm:main-Q}, we will compute the ring structure of $\rH^\bullet(\bF; \cO_\bF)$ in the relevant cases.

\begin{theorem} \label{thm:partialflag}
  \begin{enumerate}
  \item If $m - n \ge p_r - q_r \ge \cdots \ge p_2 - q_2 \ge p_1 - q_1 \ge 0$, then  we have an isomorphism of graded algebras
    \[
      \rH^\bullet(\bF; \cO_\bF) \cong \rH^\bullet_{\rm sing}(\Fl(q_1, q_2,\dots, q_r, \bC^n); \bC).
    \]
  \item If $n - m \ge q_r - p_r \ge \cdots \ge q_2 - p_2 \ge q_1 - p_1 \ge 0$, then     we have an isomorphism of graded algebras
  \[
    \rH^\bullet(\bF; \cO_\bF) \cong \rH^\bullet_{\rm sing}(\Fl(p_1, p_2,\dots, p_r, \bC^m); \bC).
  \]    
\end{enumerate}
In both cases, $\cO_\bF$ is $\cJ$-formal.
\end{theorem}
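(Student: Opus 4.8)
First, case (2) follows from case (1) exactly as in the deduction of case (2) from case (1) of Theorem~\ref{thm:main-Q}: replacing $V$ by the shifted space $V[1]$ gives $\Fl(\bp|\bq, V) \cong \Fl(\bq|\bp, V[1])$, which turns the hypothesis of case (2) into that of case (1), turns $\Fl(q_1,\dots,q_r,\bC^n)$ into $\Fl(p_1,\dots,p_r,\bC^m)$, and does not affect the $\cJ$-formality assertion. So assume the hypothesis of case (1).

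We induct on $r$. Let $\bF' = \Fl(p_2|q_2 < \cdots < p_r|q_r, V)$ (with $\bF' = \Spec(\bC)$ when $r = 1$), let $\cR'$ be the tautological subbundle of rank $p_2|q_2$ on $\bF'$ (with $\cR' = V$ when $r = 1$), and let $\pi\colon \bF \to \bF'$ be the map forgetting the smallest subspace, which realizes $\bF$ as the relative super Grassmannian $\Gr(p_1|q_1, \cR')$ over $\bF'$. The hypothesis of case (1) is precisely what makes Theorem~\ref{thm:relative}(1) applicable to $\pi$ with $\alpha = \beta = \emptyset$ (with $\cV = \cR'$ it reads $p_2 - q_2 \ge p_1 - q_1 \ge 0$), and by induction $\rH^\bullet(\bF';\cO_{\bF'}) \cong \rH^\bullet_{\rm sing}(\Fl(q_2,\dots,q_r,\bC^n))$ with $\cO_{\bF'}$ being $\cJ$-formal; the base case $r = 1$, where $\bF = \Gr(p_1|q_1,V)$, is \cite[Theorem 1.2(a)]{superres} together with the $\cJ$-formality of the structure sheaf of a super Grassmannian, also a consequence of \cite{superres}.

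Set $\cA = \bigoplus_k \rR^k\pi_*\cO_\bF$. The key input is the relative version of the structure sheaf computation of \cite{superres}: running the $\cJ$-adic filtration of $\cO_\bF$ relative to $\bF'$ and carrying the Kempf collapsing and splitting-ring arguments of \cite{superres} out fibrewise over $\bF'$ identifies $\cA$, as a sheaf of graded $\cO_{\bF'}$-algebras, with a relative factorization ring $\Fact^{(q_1,\, q_2 - q_1)}_{\cO_{\bF'}}(\chi)$ built from a characteristic polynomial $\chi$ attached to the odd bundle $\cR'_1$; by the flag-bundle interpretation of factorization rings (Proposition~\ref{prop:fact-flag} and \cite[Theorem 6.1]{GSS}), $\Spec_{\bF'}(\cA)$ is the Grassmann bundle $\Gr(q_1, \cR'_1)$ over the odd factor $\Fl(q_2,\dots,q_r,\bC^n)$ of $(\bF')_\bos$. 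In particular $\cA$ is graded-free over $\cO_{\bF'}$ with generators in even degree, so every entry $\rE_2^{p,q} = \rH^p(\bF'; \cA^q)$ of the Leray spectral sequence of $\pi$ has even total degree (since $\rH^\bullet(\bF';\cO_{\bF'})$ is concentrated in even degree), the spectral sequence degenerates, and $\rH^\bullet(\bF;\cO_\bF)$ inherits a filtration with associated graded $\rH^\bullet(\bF';\cA)$. Tracking the ring structure through the degenerate spectral sequence and through the relative factorization ring description of $\cA$---adjoining the coefficients of a factorization $\chi(u) = g_1(u) g_2(u)$ over $\rH^\bullet(\bF';\cO_{\bF'}) \cong \rH^\bullet_{\rm sing}(\Fl(q_2,\dots,q_r,\bC^n))$ and invoking \cite[Theorem 6.1]{GSS}---identifies $\rH^\bullet(\bF;\cO_\bF)$ with $\rH^\bullet_{\rm sing}$ of the $\Gr(q_1,-)$-bundle over $\Fl(q_2,\dots,q_r,\bC^n)$, i.e.\ with $\rH^\bullet_{\rm sing}(\Fl(q_1,q_2,\dots,q_r,\bC^n))$. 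This closes the induction and gives (1). For $\cJ$-formality, the same iterated Leray computation applied over $(\bF)_\bos$ to $\gr\cO_\bF = \bigwedge^\bullet(\cJ/\cJ^2)$ in place of $\cO_\bF$---using the fibrewise $\cJ$-formality provided by Theorem~\ref{thm:main-Q}---yields $\dim_\bC \rH^\bullet((\bF)_\bos; \gr\cO_\bF) = \dim_\bC \rH^\bullet(\bF;\cO_\bF)$; since the $\cJ$-adic spectral sequence is finite and cannot raise total dimension, it degenerates, so $\cO_\bF$ is $\cJ$-formal.

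The step we expect to be the main obstacle is pinning down the \emph{ring} structure of $\rH^\bullet(\bF;\cO_\bF)$, not merely its module structure. The cohomology ring $\rH^\bullet_{\rm sing}(\Fl(q_1,\dots,q_r,\bC^n))$ is genuinely not a tensor product of the cohomology rings of its Grassmannian fibres---the relations among the fibre classes get deformed by characteristic classes pulled back from the base---so one cannot work with $\cA$ only up to an isomorphism of $\cO_{\bF'}$-modules. Controlling the precise relative-factorization-ring structure of $\cA$ and matching it, via \cite[Theorem 6.1]{GSS}, with the cohomology of an iterated flag bundle---in particular checking that $\chi$ corresponds, under the inductive isomorphism of base rings, to the polynomial governing $\Fl(q_1,\dots,q_r,\bC^n)$, and that the multiplicative extension in the Leray spectral sequence is the expected one---is where the real work lies.
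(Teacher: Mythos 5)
Your approach is genuinely different from the paper's and has a real gap. The paper proves Theorem~\ref{thm:partialflag} in one shot: it constructs the Kempf collapsing $Y$ globally over $\bF_\bos = \Fl(\bp,\bC^m)\times\Fl(\bq,\bC^n)$, shows the projection $\pi\colon Y\to\fg_1^*$ factors through the affine factorization ring $\wt Z=\Spec(\Fact^{\bd}_{\bC[\fg_1^*]}(\chi))$, establishes that $\wt Z$ has rational singularities and that $\phi\colon Y\to\wt Z$ has no higher direct images, and then applies cohomology and base change plus the $\cJ$-adic spectral sequence. Everything is purely bosonic and takes place over ordinary schemes. You instead induct on $r$ via the Leray spectral sequence for $\bF\to\bF'\to\Spec(\bC)$, which requires knowing $\cA=\bigoplus_k\rR^k\pi_*\cO_\bF$ as a sheaf of graded $\cO_{\bF'}$-\emph{algebras} over the superscheme $\bF'$.

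This is precisely where the gap is. You assert that ``carrying the Kempf collapsing and splitting-ring arguments of \cite{superres} out fibrewise over $\bF'$'' identifies $\cA$ with a relative factorization ring $\Fact^{(q_1,q_2-q_1)}_{\cO_{\bF'}}(\chi)$. But nothing in the paper provides that: Theorem~\ref{thm:relative} with $\alpha=\beta=\emptyset$ is vacuous (it says the identity map induces an isomorphism $\cA\cong\cA$), and it never gives the \emph{ring} structure of $\cA$ over $\cO_{\bF'}$. Making the relative Kempf collapsing precise over the super base $\bF'$ is itself nontrivial: the odd bundle $\cR'_1$ and the would-be characteristic polynomial $\chi$ only live on the bosonic reduction $(\bF')_\bos$, not on $\bF'$, so a ``relative factorization ring over $\cO_{\bF'}$'' is not literally well formed as written, and a careful replacement of it would mean redoing much of the rational-singularities and pushforward analysis in a relative setting that \cite{superres} does not address. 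You have correctly identified the ring structure as the main obstacle, but the proposal does not actually surmount it, whereas the paper's global construction avoids it entirely. (The paper does use your Leray induction idea, but only for the subsequent theorem, and there it only needs module information because the ring structure of $\rH^\bullet(\bF;\cO_\bF)$ has already been pinned down by Theorem~\ref{thm:partialflag}.) The reduction of case (2) to case (1), the parity argument for spectral-sequence degeneration, and the dimension-count argument for $\cJ$-formality are all sound and match the paper's strategy.
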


\begin{proof}
  Since $\Fl(\bp|\bq, V) \cong \Fl(\bq|\bp, V[1])$, (2) reduces to (1), so we will just prove case (1).
  
  Let $\cJ$ be the ideal sheaf generated by the odd degree elements in $\cO_\bF$. From \S\ref{ss:supergeom}, $\cJ/\cJ^2$ is a subbundle of $\fg_1 \times \bF_\bos$; let $Y$ be the subvariety of $\fg_1^* \times \bF_\bos$ cut out by these equations. Also from \S\ref{ss:supergeom}, $Y$ is the space of tuples $(R_{0,\bullet}, R_{1,\bullet}, f,g)$ where
  \begin{itemize}
\item $R_{0,\bullet} \in {\bf Fl}(\bp,V_0)$, 
\item $R_{1,\bullet} \in {\bf Fl}(\bq, V_1)$, 
\item $f \colon V_0 \to V_1$ is a linear map such that $f(R_{0,i}) \subseteq R_{1,i}$ for all $i$, and
\item $g \colon V_1 \to V_0$ is a linear map such that $f(R_{1,i}) \subseteq R_{0,i}$ for all $i$.
\end{itemize}
Let $\chi(u) \in \bC[\fg_1^*][u]$ be the characteristic polynomial of $fg$ and let $\bd = (q_1, q_2-q_1,\dots,q_r-q_{r-1},n-q_r)$. Then the map $\pi \colon Y \to \fg_1^*$ factors through $\wt{Z} = \Spec(\Fact^\bd_{\bC[\fg_1^*]}(\chi))$. To see this, given $(R_{0,\bullet}, R_{1,\bullet}, f,g) \in Y$, then for each $i=1,\dots,r+1$, we have an induced map $fg \colon R_{1,i}/R_{1,i-1} \to R_{1,i}/R_{1,i-1}$ (with the conventions $R_{1,r+1}=V_1$ and $R_{1,0}=0$). If we let $\chi_i$ denote its characteristic polynomial, then we have an ordered factorization $\chi = \chi_1 \cdots \chi_{r+1}$ and $\deg\chi_i = q_i-q_{i-1}$ (with the conventions $q_{r+1}=n$ and $q_0=0$). We note that $\wt{Z}$ is reduced by \cite[Proposition 3.1]{superres} and Proposition~\ref{prop:fact-ring}.

Let $\phi \colon Y \to \wt{Z}$ be the corresponding map. Let $U \subset \wt{Z}$ be the nonempty open subset where $g$ is injective, and the eigenvalues of $fg$ are nonzero and distinct. We claim that $\phi$ is surjective and that for $x \in U$, we have
\[
  \phi^{-1}(x) \cong \prod_{i=0}^{r-1} \Gr((p_{i+1}-p_i) - (q_{i+1}-q_i), \bC^{(m-p_i)-(n-q_i)}).
\]

Since $\phi$ is projective and $U$ is dense, it suffices to show that $U \subseteq \phi(Y)$.  Pick $x \in U$, so that we have corresponding linear maps $f,g$ and an ordered factorization $\chi=\chi_1\cdots\chi_{r+1}$. For each $i=0,\dots,r$, let $R_{1,i}$ be the span of the eigenspaces corresponding to the eigenvalues which are the roots of $\chi_1\cdots \chi_i$. Since $fg$ has distinct eigenvalues, we have $\dim R_{1,i} = q_i$.

We will show by induction on $i$ that we can choose subspaces $R_{0,0} \subseteq \cdots \subseteq R_{0,i} \subseteq V_0$ such that $\dim R_{0,j} = p_j$, $g(R_{1,j}) \subseteq R_{0,j}$, and $f(R_{0,j}) \subseteq R_{1,j}$ for all $j=1,\dots,i$.
For the base case $i=0$, we take $R_{0,0}=0$ and $R_{1,0}=0$.

Assuming the first $i$ spaces have been constructed, consider the induced map $f' \colon V_0/R_{0,i} \to V_1/R_{1,i}$. Then
\[
  \dim \ker f' = m-p_i - (n-q_i) \ge (p_{i+1}-p_i)-(q_{i+1}-q_i)
\]
so we can choose a subspace $R' \subseteq \ker f'$ of dimension $(p_{i+1}-p_i)-(q_{i+1}-q_i)$, and let $R''$ be its preimage in $V_0$. We set $R_{0,i+1} = R'' + g(R_{1,i+1})$. By our assumption on $U$, $\dim R_{0,i+1} = p_{i+1}$, and by construction, $f(R_{0,i+1}) \subseteq R_{1,i+1}$. Also, this exhausts all valid choices for $R_{0,i+1}$.

Thus, we have constructed a point in $\pi^{-1}(x)$, so $\pi$ is surjective and the claim about the fibers follows from our construction. 

Finally, $\wt{Z}$ is a quotient of $\Spec(\Split_{\bC[\fg_1^*]}(\chi) )$ by a product of symmetric groups (Proposition~\ref{prop:fact-ring}). The latter has rational singularities (either by using Proposition~\ref{prop:fact-ring-vanish} with $\dim E=\dim F =0$ or by \cite[Proposition 5.15]{superres}), and so $\wt{Z}$ also has rational singularities \cite[Th\'eor\`eme]{boutot}. By \cite[Proposition 4.2]{superres} and our claim above, we conclude that $\rR^i \phi_* \cO_Y = 0$ for $i>0$ and $\phi_* \cO_Y = \cO_{\wt{Z}}$.

Let $A = \rH^\bullet_{\rm sing}(\Fl(q_1,\dots,q_r, \bC^n), \bC)$. Then we have an isomorphism of graded rings $\phi_* \cO_Y \otimes_{\bC[\fg_1^*]} \bC \cong A$ by Proposition~\ref{prop:fact-flag}. By cohomology and base change (see also \cite[Theorem 5.1.2]{weyman}), we conclude that
\[
  \rH^i(\bF_\bos; \bigwedge^j(\cJ/\cJ^2)) = \begin{cases} 0 & \text{if $i\ne j$}\\  A^i & \text{if $i=j$} \end{cases}.
\]
Since $A^i=0$ if $i$ is odd, the spectral sequence
\[
  \rH^p(\bF_\bos; \bigwedge^{p+q}(\cJ/\cJ^2)) \Longrightarrow \rH^p(\bF; \cO_\bF)
\]
is degenerate and the associated graded ring of the induced filtration on $\rH^\bullet(\bF; \cO_\bF)$ 
coincides with $\rH^\bullet(\bF; \cO_\bF)$ (see also \cite[Proposition 6.13]{superres}), so we have an isomorphism of graded rings $A \cong \rH^\bullet(\bF; \cO_\bF)$.
\end{proof}

\begin{theorem}
 Let $\alpha$, $\beta$ be partitions and assume that one of the following conditions holds:
    \begin{enumerate}
    \item $m - n - \ell(\alpha) \ge p_r - q_r \ge \cdots \ge p_2 - q_2 \ge p_1 - q_1 \ge \ell(\beta)$, or
    \item $n - m - \alpha_1 \ge q_r - p_r \ge \cdots \ge q_2 - p_2 \ge q_1 - p_1 \ge \beta_1$.
    \end{enumerate}
    Then we have a $\GL(V)$-equivariant isomorphism of graded $\rH^\bullet(\bF; \cO_\bF)$-modules
  \[
    \rH^\bullet(\bF; \bS_\alpha (\cQ_r) \otimes \bS_\beta (\cR_1^*)) \cong \rH^\bullet(\bF; \cO_\bF) \otimes \bS_{[\alpha; \beta]}(V)
  \]
  where $\rH^\bullet(\bF;\cO_\bF)$ is a trivial $\GL(V)$-representation and the right side is the free $\rH^\bullet(\bF; \cO_\bF)$-module generated by $\bS_{[\alpha;\beta]}(V)$ in degree $0$.
\end{theorem}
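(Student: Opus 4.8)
The plan is to induct on the number of steps $r$, reducing case (2) to case (1) at the outset. The isomorphism $\bF = \Fl(\bp|\bq,V) \cong \Fl(\bq|\bp,V[1])$ carries $\bS_\alpha\cQ_r \otimes \bS_\beta\cR_1^*$ to $\bS_{\alpha^T}\cQ_r' \otimes \bS_{\beta^T}(\cR_1')^*$ and $\bS_{[\alpha;\beta]}(V)$ to $\bS_{[\alpha^T;\beta^T]}(V[1])$, while condition (2) for $(V,\alpha,\beta)$ becomes condition (1) for $(V[1],\alpha^T,\beta^T)$; this is verbatim the reduction preceding the proof of Theorem~\ref{thm:main-Q}. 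So assume condition (1). The base case $r=1$ is Theorem~\ref{thm:main-Q} (with $\cQ=\cQ_1=\cQ_r$ and $\cR=\cR_1$).

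For the inductive step, write $\bF' = \Fl(p_1|q_1 < \dots < p_{r-1}|q_{r-1}, V)$, with bottom subbundle $\cR_1'$ and top quotient $\cQ_{r-1}' = V/\cR_{r-1}'$ of rank $(m-p_{r-1})|(n-q_{r-1})$, and let $\pi\colon\bF\to\bF'$ be the morphism forgetting $\cR_r$. This exhibits $\bF$ as the relative super Grassmannian $\Gr(p_r-p_{r-1}|q_r-q_{r-1},\cQ_{r-1}')$ over $\bF'$, under which $\cQ_r = V/\cR_r$ is the tautological quotient and $\cR_1 = \pi^*\cR_1'$ is pulled back. Condition (1) restricts to the two inequalities $(m-p_{r-1})-(n-q_{r-1})-\ell(\alpha)\ge (p_r-p_{r-1})-(q_r-q_{r-1})\ge 0$, so Theorem~\ref{thm:relative} applies to this relative Grassmannian with partition $\alpha$ on the quotient and the empty partition on the sub; writing $\cA = \bigoplus_k\rR^k\pi_*\cO_\bF$, using $\bS_{[\alpha;\varnothing]}=\bS_\alpha$, and invoking the projection formula (legitimate since $\bS_\beta\cR_1^*$ is pulled back) we obtain
\[
  \rR^k\pi_*(\bS_\alpha\cQ_r \otimes \bS_\beta\cR_1^*)\ \cong\ \cA^k\otimes_{\cO_{\bF'}}\bS_\alpha\cQ_{r-1}'\otimes_{\cO_{\bF'}}\bS_\beta\cR_1^*.
\]
Since a super Grassmann bundle is Zariski-locally trivial and, by Theorem~\ref{thm:main-Q}, the general linear supergroup of the fibre acts trivially on the cohomology of its structure sheaf, $\cA$ is the constant sheaf of graded algebras $\cO_{\bF'}\otimes_\bC\rH^\bullet_{\rm sing}(\Gr_{q_r-q_{r-1}}(\bC^{n-q_{r-1}});\bC)$; in particular it is free over $\cO_{\bF'}$ and concentrated in even cohomological degrees.

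Now apply the inductive hypothesis to $\bF'$ and $\bS_\alpha\cQ_{r-1}' \otimes \bS_\beta\cR_1^*$ — the required hypothesis $m-n-\ell(\alpha)\ge p_{r-1}-q_{r-1}\ge\dots\ge p_1-q_1\ge\ell(\beta)$ is exactly the tail of condition (1) — to get a $\GL(V_0)\times\GL(V_1)$-equivariant isomorphism $\rH^\bullet(\bF';\bS_\alpha\cQ_{r-1}'\otimes\bS_\beta\cR_1^*)\cong \rH^\bullet(\bF';\cO_{\bF'})\otimes\bS_{[\alpha;\beta]}(V)$, free over $\rH^\bullet(\bF';\cO_{\bF'})$. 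Feed this into the Leray spectral sequence $\rH^i(\bF';\rR^j\pi_*(\bS_\alpha\cQ_r\otimes\bS_\beta\cR_1^*))\Rightarrow \rH^{i+j}(\bF;\bS_\alpha\cQ_r\otimes\bS_\beta\cR_1^*)$. By the displayed formula and the description of $\cA$, the $E_2$-page is $\rH^j_{\rm sing}(\Gr_{q_r-q_{r-1}}(\bC^{n-q_{r-1}});\bC)\otimes\rH^i(\bF';\cO_{\bF'})\otimes\bS_{[\alpha;\beta]}(V)$; since $\rH^\bullet(\bF';\cO_{\bF'})$ is concentrated in even degrees (Theorem~\ref{thm:partialflag}(1) applied to $\bF'$) and so is singular cohomology of a Grassmannian, the $E_2$-page lives in even total degree, whence the spectral sequence degenerates; the same holds for the Leray spectral sequence of $\cO_\bF$, of which the former is a module. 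Passing to associated graded shows that $\gr\rH^\bullet(\bF;\bS_\alpha\cQ_r\otimes\bS_\beta\cR_1^*)$ is a free $\gr\rH^\bullet(\bF;\cO_\bF)$-module on $\bS_{[\alpha;\beta]}(V)$ placed in degree $0$; the standard lifting argument used at the end of the proof of Theorem~\ref{thm:main-Q} then promotes this to the statement that $\rH^\bullet(\bF;\bS_\alpha\cQ_r\otimes\bS_\beta\cR_1^*)$ is a free $\rH^\bullet(\bF;\cO_\bF)$-module generated by $\bS_{[\alpha;\beta]}(V)$ in degree $0$. The isomorphism so produced is $\GL(V_0)\times\GL(V_1)$-equivariant; since $\rH^\bullet(\bF;\cO_\bF)$ is a trivial $\GL(V)$-module and $\bS_{[\alpha;\beta]}(V)$ is $\GL(V)$-irreducible by Theorem~\ref{thm:schur-irred} (whose hypothesis $m-n\ge\ell(\alpha)+\ell(\beta)$ follows from condition (1)), linear independence of irreducible characters upgrades it to a $\GL(V)$-equivariant isomorphism.

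The main obstacle is not a new idea but bookkeeping: one must pin down $\cA = \rR\pi_*\cO_\bF$ precisely enough — as a free sheaf of even-degree graded $\cO_{\bF'}$-algebras with trivial monodromy — so that both Leray spectral sequences degenerate and carry compatible module structures, and then track this through carefully to obtain an $\rH^\bullet(\bF;\cO_\bF)$-module isomorphism rather than merely an isomorphism of graded vector spaces. Everything else reduces directly to Theorems~\ref{thm:main-Q}, \ref{thm:relative}, \ref{thm:partialflag}, and \ref{thm:schur-irred}.
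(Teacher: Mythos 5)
Your proposal is correct and follows essentially the same approach as the paper's proof: reduce case (2) to case (1) via the parity shift $V[1]$, induct on the number of flag steps with base case Theorem~\ref{thm:main-Q}, realize each forgetful map as a relative super Grassmannian, apply Theorem~\ref{thm:relative} together with the projection formula (using that $\cR_1$ is pulled back), and degenerate the Leray spectral sequence by even-degree concentration. The only cosmetic differences are that the paper indexes its induction upward through $\bF(1) \subset \cdots \subset \bF(r)$ while you peel off the top step at each stage, and the paper leaves the $\GL(V)$-equivariance upgrade implicit in the naturality of Theorem~\ref{thm:relative} whereas you make the appeal to Theorem~\ref{thm:schur-irred} and linear independence of irreducible characters explicit.
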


\begin{remark}
  \begin{enumerate}
  \item As in the Grassmannian case, the isomorphism in the above theorem is induced by the canonical surjection
    \[
      \bS_\alpha V \otimes \bS_\beta(V^*) \times \bF \to \bS_\alpha(\cQ_r) \otimes \bS_\beta(\cR_1^*) \to 0.
    \]
    
  \item If $\cV$ is a superbundle over a superscheme $X$, we can form the relative super partial flag variety $\Fl(\bp|\bq, \cV)$. There is a version of the theorem that handles this as well, but we omit it for simplicity. The proof of Theorem~\ref{thm:relative} can be adapted to get this general case.
    
  \item It is plausible that $\bS_\alpha(\cQ_r) \otimes \bS_\beta(\cR_1^*)$ is $\cJ$-formal like in the super Grassmannian case, but it is not clear to us if this follows from our proof.
    \qedhere
  \end{enumerate}
\end{remark}

\begin{proof}
{\bf  As in the Grassmannian case, case (2) follows from case (1), so we will only deal with case (1).}

  For $i=1,\dots,r$, let $\bF(i) = \Fl(p_1|q_1,\dots,p_i|q_i, V)$ so that $\bF(r) = \bF$ and $\bF(1) = \Gr(p_1|q_1,V)$. Define the following vector bundle on $\bF(i)$:
  \[
    \cE(i) = \bS_\alpha (\cQ_i) \otimes \bS_\beta(\cR_1^*).
  \]
  Also set
  \[
    A(i) = \rH^\bullet_{\rm sing}(\Gr(q_i-q_{i-1}, \bC^{n-q_{i-1}}); \bC)
  \]
  using the convention $q_0=0$.

  We claim that we have a graded $\GL(V)$-equivariant isomorphism 
  \[
    \rH^\bullet(\bF(i); \cE(i)) \cong A(1) \otimes A(2) \otimes \cdots \otimes A(i) \otimes \bS_{[\alpha; \beta]}(V)
  \]
  and that $\rH^\bullet(\bF(i); \cE(i))$ is generated by the subspace $A(1) \otimes \cdots \otimes A(i-1) \otimes \bS_{[\alpha;\beta]}(V)$ as an $A(i)$-module.
  
  We prove this by induction on $i$. The base $i=1$ is the main result from this article (Theorem~\ref{thm:main-Q}).

For the induction step, consider the map $\pi(i) \colon \bF(i+1) \to \bF(i)$ which forgets the last subspace; this map realizes $\bF(i+1)$ as the relative Grassmannian $\Gr(p_{i+1}-p_i|q_{i+1}-q_i, \cQ_i)$. We consider the Leray spectral sequence applied to the spaces
  \[
    \bF(i+1) \xrightarrow{\pi(i)} \bF(i) \to \Spec(\bC)
  \]
  which gives the $\rE_2$ spectral sequence
  \[
    \rE_2^{a,b} = \rH^a(\bF(i); \rR^b \pi(i)_* \cE(i+1) ) \Longrightarrow \rH^{a+b}(\bF(i+1); \cE(i+1)).
  \]
  First we simplify the input. Note that $\pi(i)^* \bS_\beta(\cR_1^*) = \bS_\beta(\cR_1^*)$ since pullback commutes with multilinear operations on vector bundles, and the higher derived pullbacks vanish.  Also $\rank (\cQ_i) = m-p_i|n-q_i$ and by our assumption, we have
  \[
    m-p_i - (n-q_i) - \ell(\alpha) \ge (p_{i+1} - p_i) - (q_{i+1}-q_i),
  \]
  so by the projection formula and Theorem~\ref{thm:relative}, we have
  \[
    \rR^b \pi(i)_* \cE(i+1) = A(i+1)^b \otimes \cE(i)
  \]
  and, furthermore, $\rR \pi(i)_* \cE(i+1)$ is a free $A(i+1)$-module. Hence the input to the spectral sequence can be written as
  \[
    A(i+1)^b \otimes \rH^a(\bF(i); \cE(i)).
  \]
  However, $A(i+1)^b=0$ if $b$ is odd, and by induction, $\rH^a(\bF(i); \cE(i))=0$ if $a$ is odd, so the spectral sequence must degenerate. In particular, we have an isomorphism of $A(i+1)$-modules
  \[
    \rH^\bullet(\bF(i+1); \cE(i+1)) \cong A(i+1) \otimes \rH^\bullet(\bF(i); \cE(i)),
  \]
  which proves the claim.

  From the claim, we deduce that $\rH^\bullet(\bF; \cE(r))$ is generated by $\bS_{[\alpha;\beta]}(V)$ as a $\rH^\bullet(\bF;\cO_\bF)$-module, so we get a surjective map
  \[
    \rH^\bullet(\bF;\cO_\bF) \otimes \bS_{[\alpha;\beta]}(V) \to \rH^\bullet(\bF; \cE(r))
  \]
  of $\rH^\bullet(\bF;\cO_\bF)$-modules. Since both sides have the same dimension, it must be an isomorphism, which proves the freeness claim.
\end{proof}
  
\section{Examples and remarks} \label{sec:examples}

We end with a few examples. Example~\ref{ex:ex1} shows how to directly compute the cohomology for one case covered by our main theorem. The next two examples are cases not covered by our main theorem and illustrate how different conclusions can fail outside of our assumptions.

Example~\ref{ex:ex2} gives an example where the $\rH^0$ is as expected, but the total cohomology is not free over $\rH^\bullet(X;\cO_X)$. Furthermore, the sheaf is not $\cJ$-formal, i.e., the spectral sequence coming from the $\cJ$-adic filtration is nondegenerate.

Example~\ref{ex:ex3} gives an example which is $\cJ$-formal, but where $\rH^0$ is strictly larger than the representation we expect, i.e., the canonical map \eqref{eqn:schur-surj} is nonzero, but not surjective, on sections.

  \begin{example} \label{ex:ex1}
    We'll go through a simple special case of our theorem, but illustrate what the computation looks like when done directly.
    
  Consider $m = 3$ and $n = 2$ with $p=q=1$, and we want to compute the cohomology of $\Sym^d \cQ$ (to avoid boundary cases, we will assume $d \ge 2$), whose associated graded sheaf with respect to the $\cJ$-adic filtration is
  \[
    (\Sym^d(\cQ_0) + (\Sym^{d-1}(\cQ_0) \otimes \cQ_1)) \otimes \bigwedge^\bullet (\cR_0 \otimes \cQ_1^*) \otimes \bigwedge^\bullet (\cQ_0^* \otimes \cR_1).
  \]
 In this case, $\rH_{\rm sing}^\bullet(\Gr_1(V_1); \bC) \cong \bC[t]/(t^2)$ where $\deg t = 2$.
  
  We remark here on one source of combinatorial complication. First, $\Sym^d(V)$ has 3 terms as a $\GL(V_0) \times \GL(V_1)$-representation:
  \[
    \Sym^d(V) = \Sym^d(V_0) \oplus (\Sym^{d-1}(V_0) \otimes V_1) \oplus (\Sym^{d-2}(V_0) \otimes \bigwedge^2(V_1)).
  \]
  From the usual Borel--Weil theorem, we have
  \begin{align*}
    \rH^0(X_\bos; \Sym^d(\cQ_0)) &= \Sym^d(V_0),\\
    \rH^0(X_\bos; \Sym^{d-1}(\cQ_0) \otimes \cQ_1) &= \Sym^{d-1}(V_0) \otimes V_1,
  \end{align*}
  so the term $\Sym^{d-2}(V_0) \otimes \bigwedge^2(V_1)$ necessarily requires a contribution from the exterior algebra portion of the sheaf. Navigating these contributions in the general case seems quite difficult to do, which is why we have opted for the indirect approach that we took in this paper.

In general, the associated graded sheaf is a direct sum of the sheaves of the form
\[
  \cR_0^{\otimes a} \otimes \bigwedge^b (\cQ_0^*)   \otimes \Sym^{d-c}(\cQ_0) \otimes \cR_1^{\otimes b} \otimes \cQ_1^{\otimes c-a},
\]
where $a,c \le 1$ and $b\le 2$. Here are all of the terms that give nonzero cohomology $\rH^i$:
  \[
    \begin{array}{c|c|c|c|c}
      a & b & c & i & \rH^i\\      \hline
      0 & 0 & 0 & 0 & \Sym^d(V_0)\\      \hline
      0 & 0 & 1 & 0 & \Sym^{d-1}(V_0) \otimes V_1 \\ \hline
      0 & 1 & 1 & 0 & \Sym^{d-2}(V_0) \otimes \bigwedge^2(V_1)\\ \hline
      1 & 1 & 0 & 2 & \Sym^d(V_0) \\ \hline
      1 & 1 & 1 & 2 & \Sym^{d-1}(V_0) \otimes V_1 \\ \hline
      1 & 2 & 1 & 2 & \Sym^{d-2}(V_0) \otimes \bigwedge^2(V_1)
    \end{array}
  \]
  So we get $\rH^0(X; \Sym^d(\cQ)) = \rH^2(X; \Sym^d(\cQ)) = \Sym^d(V)$, as expected.
\end{example}

\begin{example} \label{ex:ex2}
  Now we slightly modify the parameters in the previous example to get a case that is not covered by our main theorem.

  Consider now $m=n=2$ and $p=q=1$ and again we consider the sheaf $\Sym^d \cQ$ with $d \ge 2$. Again, we have $\rH^\bullet(X;\cO_X) \cong \bC[t]/(t^2)$ with $\deg(t)=2$. Each of $\cR_i,\cQ_i$ on $X_\bos$ is a line bundle, so the associated graded sheaf of $\Sym^d \cQ$ is a direct sum of the sheaves of the form
\[
  \cR_0^{\otimes a} \otimes \cQ_0^{\otimes d-c-b} \otimes \cR_1^{\otimes b} \otimes \cQ_1^{\otimes c-a} 
\]
where $a,b,c \le 1$. Here are all of the terms that give nonzero cohomology $\rH^i$:
  \[
    \begin{array}{c|c|c|c|c}
      a & b & c & i & \rH^i\\      \hline
      0 & 0 & 0 & 0 & \Sym^d(V_0)\\      \hline
      0 & 0 & 1 & 0 & \Sym^{d-1}(V_0) \otimes V_1 \\ \hline
      0 & 1 & 1 & 0 & \Sym^{d-2}(V_0) \otimes \bigwedge^2(V_1)\\ \hline
      1 & 0 & 1 & 0 & \bS_{d-1,1}(V_0) \\ \hline
      1 & 1 & 0 & 1 & \bS_{d-1,1}(V_0)
    \end{array}
  \]
  If the spectral sequence degenerates, then we would have $\rH^1(X;\Sym^d(\cQ)) = \bS_{d-1,1}(V_0)$. However, there is no $\GL(V)$-representation whose underlying $\GL(V_0)\times \GL(V_1)$-representation is $\bS_{d-1,1}(V_0)$, so we conclude that the spectral sequence is nondegenerate and the last two entries of the table cancel each other.

  This leads us to the final calculation $\rH^0(X;\Sym^d \cQ) = \Sym^d V$. This is an irreducible $\GL(V)$-representation (all Schur functors $\bS_\lambda V$ are). However, unlike the previous case, we have $\rH^2(X;\Sym^d \cQ) =0$, so the cohomology is not free over $\rH^0(X;\cO_X)$.
\end{example}

\begin{example} \label{ex:ex3}
  Finally, we illustrate an example where $\rH^0$ is not an irreducible $\GL(V)$-representation (but is indecomposable).
  
  Set $p=q=1$, $m \ge 2$, and $n=1$ and consider the sheaf $\Sym^d(\cR^*)$ on $X=\Gr(1|1,V)$ for $d \ge 2$. In this case, $X_\bos = \Gr(1,V_0)$, $\cJ/\cJ^2 = \cQ_0^* \otimes V_1$, and $\gr^0(\cR^*) = \cR_0^* \oplus V_1^*$. Also, by \cite[Theorem 1.2]{superres}, we have $\rH^0(X;\cO_X) = \bC$ and all higher cohomology vanishes.

  In general, for $a \ge 1$, from the usual Borel--Weil--Bott theorem (for example, see \cite[Corollary 4.1.9]{weyman}), we have
  \[
    \rH^0(X_\bos; \Sym^a \cR_0^* \otimes \bigwedge^b \cQ_0^*) = \bS_{(a,1^b)} (V_0^*).
  \]
  where $1^b$ is a length $b$ sequence consisting of all 1's, and all higher cohomology vanishes.  This gives us
  \begin{align*}
    \rH^0(X_\bos; \gr(\Sym^d\cR^*)) &=
                                      \bigoplus_{b=0}^{m-1} \rH^0(X_\bos; \Sym^d \cR_0^* \otimes \bigwedge^b \cQ_0^* \otimes V_1^{\otimes b})\\
                                    & \qquad \oplus \bigoplus_{b=0}^{m-1} \rH^0(X_\bos; \Sym^{d-1} \cR_0^* \otimes V_1^* \otimes \bigwedge^b \cQ_0^* \otimes V_1^{\otimes b})\\
    &= \bigoplus_{b=0}^{m-1} \left( ( \bS_{(d,1^b)}(V_0^*) \otimes V_1^{\otimes b}) \oplus ( \bS_{(d-1,1^b)}(V_0^*) \otimes V_1^{\otimes b-1} )\right)
  \end{align*}
  and all higher cohomology vanishes. Pick an ordered basis $v_1,\dots,v_{m+1}$ for $V$ where $v_1,\dots,v_m$ are even and $v_{m+1}$ is odd, to get a Borel subalgebra. With respect to this choice, the highest weight appears in $\bS_{d-1}(V_0^*) \otimes V_1^*$ (i.e., the second term with $b=0$). As in \S\ref{ss:irred}, this is the highest weight for $\Sym^d(V^*)$, and so we see that $\Sym^d(V^*)$ is a subrepresentation of $\rH^0(X;\Sym^d(\cR^*))$ as a $\GL(V)$-representation.

  The quotient can be identified as well. First, recall that $\GL(V)$ has a 1-dimensional character, the superdeterminant $\sdet$ (also called the Berezinian); when considering $\fgl(V)$, this becomes the supertrace. When restricted to $\GL(V_0) \times \GL(V_1)$, this is $\det(V_0) \otimes \det(V_1)^{-1}$, or in our case, $\det(V_0) \otimes V_1^*$. If we tensor the quotient by the superdeterminant, then we get a representation whose character agrees with the rational Schur functor $\bS_{[1^{m-2}; d-1]}(V)$ (using Proposition~\ref{prop:rational-schur-coord}). By Theorem~\ref{thm:schur-irred}, $\bS_{[1^{m-2}; d-1]}(V)$ is irreducible, so they are isomorphic as $\GL(V)$-representations.  Hence we have a short exact sequence
  \[
    0 \to \Sym^d(V^*) \to \rH^0(X; \Sym^d(\cR^*)) \to \sdet^{-1} \otimes \bS_{[1^{m-2}; d-1]}(V) \to 0
  \]
  In the notation of \S\ref{ss:irred}, $\Sym^d(V^*)$ has highest weight $(0,\dots,0, -d+1|-1)$ while the quotient has highest weight $(0,\dots,-1,-d+1|0)$. The latter is obtained by subtracting the odd root $e_{m-1}-f_1$ which is orthogonal to $(0,\dots,0,-d+1|-1) + \rho$, so the two weights have the same central character, so it is not clear from the combinatorics if this short exact sequence splits or not.

  However, by \cite[Lemma 2]{gruson}, the dual of the space of sections of an irreducible homogeneous bundle over a super homogeneous space is a highest weight $G$-module, and hence is indecomposable. So in particular, the space of sections is also indecomposable. Thus, the short exact sequence does {\it not} split. We thank an anonymous referee for pointing this out.
\end{example}

We end with a few brief remarks.

\begin{remark}
  While the general problem of computing cohomology of homogeneous bundles over super homogeneous spaces is unlikely to be resolved using the techniques presented here, it is still an interesting problem to identify which homogeneous bundles are $\cJ$-formal. Even in the case of the structure sheaf, this is a nontrivial problem.
\end{remark}

\begin{remark}
  In \cite[Theorem 3.1]{raicu-weyman}, the Tor groups of certain non-reduced thickenings of determinantal varieties are computed, and they are shown to be representations of the general linear Lie superalgebra in \cite{raicu-weyman2}. Notably, the formula for its character shows that it is a sum of terms which are products of something with a $q$-binomial coefficient (also called Gaussian binomial coefficient), which is precisely the form that the Hilbert series of the singular cohomology of the Grassmannian takes (see \eqref{eqn:gr-sing-hilbert}). Hence it is tempting to conjecture that these Tor groups are free over subrings which are isomorphic to such cohomology rings, or even that they are computing the cohomology of some homogeneous bundle.
\end{remark}

\begin{remark}
  In the non-super setting, the general Borel--Weil--Bott theorem shows that every irreducible homogeneous bundle has cohomology in at most one degree and that that degree is 0 if and only if the bundle is indexed by a dominant weight. We might guess that there is a nice class of homogeneous bundles in the super setting which exhibit this behavior; to adjust according to the results of this paper, we would look for a homogeneous bundle $\cE$ over $X$ such that there exists $i>0$ with the property that $\rH^j(X; \cE) \cong \rH^{j-i}(X; \cO_X) \otimes \rH^i(X; \cE)$ for all $j$. We note that the line bundles on projective superspace are one such example (see \cite[Proposition 2.33]{FAS}).
\end{remark}

\end{document}